\definecolor{red}{rgb}{1.0,0.0,0.0}
\def\red#1{{\textcolor{red}{#1}}}
\definecolor{blu}{rgb}{0.0,0.0,1.0}
\definecolor{gre}{rgb}{0.03,0.50,0.03}
\newtheorem{Theorem}{Theorem}[section]
\newtheorem{Definition}[Theorem]{Definition}
\newtheorem{Proposition}[Theorem]{Proposition}
\newtheorem{Lemma}[Theorem]{Lemma}
\newtheorem{Notation}[Theorem]{Notation}
\newtheorem{Corollary}[Theorem]{Corollary}
\newtheorem{Remark}[Theorem]{Remark}
\newtheorem{Hypothesis}[Theorem]{Hypothesis}
\newcommand{\nd}{\stackrel{\textrm{def}}{=}}
\newcommand{\N}{{\mathbb N}}
\newcommand{\R}{{\mathbb R}}
\newcommand{\ud}{\mathrm{d}}
\newcommand{\eps}{\varepsilon}
\newcommand{\Lc}{{\cal L}}
\newcommand{\bey}{\begin{eqnarray*}}
\newcommand{\eey}{\end{eqnarray*}}
\newcommand{\beq}{\begin{equation}}
\newcommand{\eeq}{\end{equation}}
\def\Swiech
\def\qedo{\hbox{\hskip 6pt\vrule width6pt height7pt
depth1pt  \hskip1pt}\bigskip}
\def\<{\langle }
\def\>{\rangle }
\def\R{\mathbb R}
\def\N{\mathbb N}
\def\cald{{\cal D}}
\def\call{{\cal L}}
\def\calq{{\cal Q}}
\def\calu{{\cal U}}
\def\call{{\cal L}}
\def\calr{{\cal R}}
\def\1{\mathbf 1}
\def\to{\rightarrow}
\author{
\vspace{-3.5truecm}
P. Acquistapace\footnote{Dipartimento di Matematica,
Universit\`a di Pisa, e-mail: paolo.acquistapace@unipi.it},
\; F.Gozzi\footnote {Dipartimento di Economia e Finanza,
Universit\`a \emph{LUISS - Guido Carli} Roma;
e-mail: fgozzi@luiss.it}}
\title{Minimum energy
with infinite horizon:
\\ from stationary  to non-stationary states}
\date{}
\begin{document}
\maketitle
\vspace{-1.0truecm}

\begin{abstract}
We study a non standard infinite horizon, infinite dimensional
linear-quadratic control problem arising in the physics of non-stationary states (see e.g. \cite{BDGJL4,BertiniGabrielliLebowitz05}): finding the minimum energy to drive a given stationary state $\bar x=0$ (at time $t=-\infty$) into an arbitrary non-stationary state $x$ (at time $t=0$). This is the opposite to what is commonly studied in the literature on null controllability (where one drives a generic state $x$ into the equilibrium state $\bar x=0$). Consequently, the Algebraic Riccati Equation (ARE) associated to this problem is non-standard since the sign of the linear part is opposite to the usual one and since it is intrinsically unbounded. Hence the standard theory of AREs does not apply. The analogous finite horizon problem has been studied in the companion paper \cite{AcquistapaceGozzi17}. Here, similarly to such paper, we prove that the linear selfadjoint operator associated to the value function is a solution of the above mentioned ARE. Moreover, differently to \cite{AcquistapaceGozzi17}, we prove that such solution is the maximal one. The first main result (Theorem \ref{th:maximalARE}) is proved by approximating the problem with suitable auxiliary finite horizon problems (which are different from the one studied in \cite{AcquistapaceGozzi17}). Finally in the special case where the involved operators commute we characterize all solutions of the ARE (Theorem \ref{th:sol=proj}) and we apply this to the Landau-Ginzburg model.

\end{abstract}

{\small \textbf{Keywords}: Minimum energy;
 Null controllability; Landau-Ginzburg model; Optimal control with infinite horizon; Algebraic Riccati Equation in infinite dimension; Value function as maximal solution.}



\tableofcontents

\section{Introduction}\label{INTRO}

We study a non standard infinite dimensional,
infinite horizon,
linear-quadratic control problem:
finding the minimum energy to drive a given stationary state $\bar x=0$
(at time $t=-\infty$) into an arbitrary non-stationary state $x$ (at time $t=0$).

This kind of problems arises in the control representation of the rate function for a class of large deviation problems (see e.g. \cite{DaPratoPritchardZabczyk91} and the references quoted therein; see also \cite[Chapter 8]{FengKurtzbook} for an introduction to the subject).  It is motivated by applications in the physics of non-equilibrium states and in this context it has been studied in various papers, see e.g. \cite{BDGJL1,BDGJL2,BDGJL3,BDGJL4,BDGJL5,BertiniGabrielliLebowitz05}
(see Section \ref{PHYSICS} for a description of a model case).

The main goal here, as a departure point of the theory, is to apply the dynamic programming approach to characterize the value function as the unique (or maximal/minimal) solution of the associated Hamilton-Jacobi-Bellman (HJB) equation, a problem left open e.g. in \cite{BDGJL4,BertiniGabrielliLebowitz05}.
This problem is quite difficult since it deals with the opposite to what is commonly studied in the literature on null controllability
(where one drives a generic state $x$ into the equilibrium state $\bar x=0$).
For this reason we start studying here the simplest case, i.e. when the state equation is linear and the energy functional is purely quadratic:
so the problem falls into the class of linear-quadratic optimal control problems,
the value function is quadratic, and the associated HJB equation
reduces to an Algebraic Riccati Equation (ARE).

The above feature (i.e. the fact we bring $0$ to $x$ instead of the opposite)
implies that the ARE associated to this problem is non-standard for two main reasons:
first, the sign of the linear part is opposite to the usual one;
second, since the set of reachable $x$ is strictly smaller than the whole state space $X$,
the solution is intrinsically unbounded in $X$.
The combination of these two difficulties does not allow to apply
the standard theory of AREs.

In the companion paper \cite{AcquistapaceGozzi17} we studied, as a first step, the associated finite horizon case. Here we partially exploit the results of such paper to deal with the more interesting infinite horizon case, which is the one that arises in the above mentioned papers
in physics.

Our main results (Theorems \ref{th:maximalARE} and \ref{th:sol=proj}) show that, under a null controllability assumption (after a given time $T_0\ge 0$) and a coercivity assumption on the control operator,
the linear selfadjoint operator $P$ associated to the value function is the maximal solution of the above mentioned ARE. The first result concerns the general case with some restrictions on the class of solutions, while the second one looks at the case where the state and the control operators commute, without any restriction on the class of solutions.

This is only partially similar to what has been done in \cite{AcquistapaceGozzi17}.
Indeed, the proof that $P$ is a solution of ARE is substantially similar to what is done
in \cite[Section 4.3]{AcquistapaceGozzi17}.
On the other hand, while in \cite[Section 4.4]{AcquistapaceGozzi17}
we prove a partial uniqueness result (i.e. uniqueness in a suitable family of invertible operators), here we are able to prove, through a delicate comparison argument (based on a nontrivial approximation procedure), that $P$ is the maximal solution of the associated ARE.

To prove the comparison argument (which is the content of the key Lemma \ref{lem:massimalitaN})
we need to introduce a family of
auxiliary finite horizon problems, which are different from the one studied in \cite{AcquistapaceGozzi17}.

Finally, in the special case where the involved operators commute, we are able, again differently from the finite horizon case, to characterize all solutions of the ARE. This allows to apply our result to the case of Landau-Ginzburg model.

\subsection{Plan of the paper}

In Section 2 we illustrate the problem and the strategy
to show the main results.
It is divided in three subsections: in the first we present the state equation and the main Hypothesis;
in the second one we describe our minimum energy problem;
the third subsection briefly explains the method used to prove our main results. Section 3 concerns the study of the auxiliary problem. After devoting the first part of the section to some basic results on it,
we show, in Subsection \ref{sub:MaxPN}, the comparison Lemma \ref{lem:massimalitaN} which will be used to prove the maximality result in the infinite horizon case. Section 4 is devoted to the main problem and the main maximality result. In Section 5 we analyze the case when the operators $A$ and $BB^*$ commute. In Section 6 we present, as an example, a special case of the motivating problem given in \cite{BDGJL4} (the case of the so-called Landau-Ginzburg model): we show that it falls into the class of problems treated in this paper.

%
%
%

\section{The problem and the main results}


\subsection{The state equation}\label{SE:STATEEQUATION}

\begin{Notation}\label{eq:call}
Given any two Banach spaces $Y$ and $Z$, we denote by $\call(Y,Z)$ the set of all linear bounded operators from $Y$ to $Z$, writing $\call(Y)$ when $Z=Y$.
When $Y$ is a Hilbert space we denote by $\call_+(Y)$ the set of all elements of $\call(Y)$ which are selfadjoint and nonnegative.
\end{Notation}


Let $-\infty<s<t<+\infty$. Consider the abstract linear equation
\begin{equation}
\label{eq:state-fin-new} \left\{
\begin{array}{l}
y'(r)=Ay(r)+Bu(r), \quad r\in \, ]s,t], \\[2mm]
y(s) = x \in X,
\end{array}
\right.
\end{equation}
under the following assumption.

\begin{Hypothesis}\label{hp:main} \begin{description}
\item[]
\item[(i)] $X$, the state space, and $U$, the control space, are real separable Hilbert spaces;
\item[(ii)] $A:{\cal D}(A)\subseteq X \to X$ is the generator of a $C_0$-semigroup on $X$ such that
\begin{equation}
\label{eq:AtipoMomega} \| e^{t A}\|_{{\cal L}(X)} \leq M e^{-\omega t},
\qquad t\ge 0,
\end{equation}
for given constants $M>0$ and $\omega>0$;
\item[(iii)] $B:U \to X$ is a bounded linear operator;
\item[(iv)] $u$, the control strategy, belongs to $L^2(s,t;U)$.
\end{description}
\end{Hypothesis}

We recall the following well known result,
pointed out e.g. in \cite[Proposition 2.2]{AcquistapaceGozzi17}.

\begin{Proposition}
\label{prop:solcontinua} For $-\infty<s<t<+\infty$, $x\in X$ and
$u\in L^2(s,t;U)$, the mild solution of (\ref{eq:state-fin-new}),
defined by
\begin{equation}\label{vardcost}
y(r;s,x,u) = e^{(r-s)A}x + \int_s^r e^{(r-\sigma)A}Bu(\sigma) \, \ud \sigma, \quad
r\in [s,t],
\end{equation}
is in $C([s,t],X)$.
\end{Proposition}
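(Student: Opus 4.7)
The plan is the standard one for mild solutions of non-autonomous-looking convolution integrals. I would split
\[
y(r;s,x,u) = y_h(r) + y_c(r), \qquad y_h(r):=e^{(r-s)A}x, \quad y_c(r):=\int_s^r e^{(r-\sigma)A}Bu(\sigma)\,\ud\sigma,
\]
and show that each summand is continuous on $[s,t]$.

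Continuity of $y_h$ is immediate from the strong continuity of the $C_0$-semigroup: right continuity at any $r_0\in[s,t)$ is the definition, while left continuity at $r_0\in(s,t]$ follows by writing $e^{(r_0-s)A}x=e^{(r_0-r)A}e^{(r-s)A}x$, using strong continuity at $0$ applied to the vector $e^{(r_0-s)A}x$, together with the uniform bound $\|e^{\tau A}\|_{\call(X)}\le M$ for $\tau\ge 0$ coming from Hypothesis \ref{hp:main}(ii).

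For $y_c$, for $s\le r_1<r_2\le t$ I would decompose
\[
y_c(r_2)-y_c(r_1)=\int_{r_1}^{r_2} e^{(r_2-\sigma)A}Bu(\sigma)\,\ud\sigma +\int_s^{r_1}\bigl[e^{(r_2-r_1)A}-I\bigr]e^{(r_1-\sigma)A}Bu(\sigma)\,\ud\sigma.
\]
By Cauchy--Schwarz and the uniform bound on the semigroup, the $X$-norm of the first integral is at most $M\|B\|_{\call(U,X)}\,\|u\|_{L^2(s,t;U)}\sqrt{r_2-r_1}$, which tends to $0$ as $r_2\to r_1^+$. For the second integral I would use dominated convergence in $X$: for each fixed $\sigma\in[s,r_1)$, strong continuity of the semigroup gives
\[
\bigl\|[e^{(r_2-r_1)A}-I]e^{(r_1-\sigma)A}Bu(\sigma)\bigr\|_X\longrightarrow 0 \quad\text{as }r_2\to r_1,
\]
while the integrand is dominated by the integrable bound $(M+1)M\|B\|_{\call(U,X)}\,\|u(\sigma)\|_U$ (integrable on $[s,r_1]$ by Cauchy--Schwarz, since $u\in L^2$). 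Hence the second integral also vanishes in the limit, giving right continuity of $y_c$; left continuity is symmetric, with the roles of $r_1$ and $r_2$ swapped in the first piece.

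The only point that might feel subtle is the second integral, since one cannot hope for $\|e^{(r_2-r_1)A}-I\|_{\call(X)}\to 0$ in operator norm; but the dominated-convergence argument applied pointwise in $\sigma$ sidesteps this issue completely and is the only non-bookkeeping step of the proof.
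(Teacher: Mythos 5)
Your proof is correct and follows essentially the same route as the paper: Proposition \ref{prop:solcontinua} is only recalled there from the companion paper, but the analogous continuity argument in Lemma \ref{lm:existencesol-new} uses exactly your decomposition of $y_c(r_2)-y_c(r_1)$ into $\int_{r_1}^{r_2}e^{(r_2-\sigma)A}Bu(\sigma)\,\ud\sigma$ plus $\int\bigl[e^{(r_2-r_1)A}-I\bigr]e^{(r_1-\sigma)A}Bu(\sigma)\,\ud\sigma$ and then invokes ``standard arguments,'' which are precisely your Cauchy--Schwarz estimate and the dominated-convergence step. The only detail worth making explicit in the left-continuity case (where $r_1\to r_2^-$ and the integrand also moves) is that for fixed $\sigma$ one uses continuity of the orbit map $\tau\mapsto e^{\tau A}Bu(\sigma)$, i.e.\ $e^{(r_1-\sigma)A}Bu(\sigma)\to e^{(r_2-\sigma)A}Bu(\sigma)$, which is itself a standard consequence of strong continuity and uniform boundedness.
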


We now consider the state equation in the half-line $\,]-\infty, t]$:
\begin{equation}
\label{eq:state-inf-new} \left\{
\begin{array}{l}
y'(r)=Ay(r)+Bu(r), \quad r\in\,]-\infty,t], \\[2mm]
\displaystyle \lim_{s \to - \infty}y(s) = 0.
\end{array}
\right.
\end{equation}
Since (\ref{eq:state-inf-new}) is not completely standard we
introduce the following definition of solution.
\begin{Definition}
\label{def:delsolstate-new} Given $u \in L^{2}(-\infty ,t;U)$, we say that
$y\in C(\, ]-\infty ,t];X)$
is a solution of (\ref{eq:state-inf-new}) if for every $-\infty <
r_1 \leq r_2 \leq t$ we have
\begin{equation}
\label{eq:defsolution-new} y(r_2) =  e^{(r_2 - r_1)A} y(r_1)+
\int_{r_1}^{r_2} e^{(r_2-\tau)A} Bu(\tau) \ud \tau.
\end{equation}
and
\begin{equation}
\label{eq:defsol-dato-new} \lim_{s \to - \infty }y(s)= 0.
\end{equation}
\end{Definition}

\begin{Lemma}
\label{lm:existencesol-new} Given any $u \in L^{2}( -\infty,t;U)$, there exists a unique solution of the
Cauchy problem
(\ref{eq:state-inf-new}) and it is given by
\begin{equation}
\label{eq:mild-inf-new} y(r;-\infty,0,u):=\int_{-\infty
}^{r}e^{(r-\tau)A}B u(\tau)\, \ud \tau, \qquad r \le t.
\end{equation}
\end{Lemma}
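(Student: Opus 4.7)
The plan is to establish existence by directly analyzing the candidate formula \eqref{eq:mild-inf-new}, and then derive uniqueness from the decay estimate in Hypothesis \ref{hp:main}(ii) together with the boundary condition \eqref{eq:defsol-dato-new}.

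First I would check that the integral in \eqref{eq:mild-inf-new} is well-defined for every $r \le t$. Using $\|e^{(r-\tau)A}\|_{\call(X)} \le M e^{-\omega(r-\tau)}$ for $\tau \le r$, boundedness of $B$, and Cauchy--Schwarz, one gets
\begin{equation*}
\int_{-\infty}^r \|e^{(r-\tau)A}Bu(\tau)\|_X \, \ud\tau
\le M\|B\|_{\call(U,X)} \left(\int_{-\infty}^r e^{-2\omega(r-\tau)}\,\ud\tau\right)^{1/2} \|u\|_{L^2(-\infty,r;U)}
\le \frac{M\|B\|}{\sqrt{2\omega}} \|u\|_{L^2(-\infty,t;U)},
\end{equation*}
so the Bochner integral converges absolutely. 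The same estimate applied on $(-\infty,s]$ in place of $(-\infty,r]$ yields $\|y(s)\|_X \le \frac{M\|B\|}{\sqrt{2\omega}} \|u\|_{L^2(-\infty,s;U)}$, and absolute continuity of the Lebesgue integral forces the right-hand side (and hence $y(s)$) to tend to $0$ as $s\to-\infty$; this is precisely \eqref{eq:defsol-dato-new}.

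Next I would verify \eqref{eq:defsolution-new}. For $-\infty < r_1 \le r_2 \le t$, split the integral as $\int_{-\infty}^{r_2} = \int_{-\infty}^{r_1} + \int_{r_1}^{r_2}$. The semigroup identity $e^{(r_2-\tau)A} = e^{(r_2-r_1)A} e^{(r_1-\tau)A}$ allows one to factor $e^{(r_2-r_1)A}$ out of the first piece (justified by the fact that the integrand is Bochner-integrable and $e^{(r_2-r_1)A}$ is bounded), giving exactly $e^{(r_2-r_1)A} y(r_1) + \int_{r_1}^{r_2} e^{(r_2-\tau)A} Bu(\tau)\,\ud\tau$. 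Continuity of $y$ on $(-\infty,t]$ then follows: on each bounded sub-interval $[s,t]$, the identity \eqref{eq:defsolution-new} with $r_1=s$ rewrites $y$ as the mild solution of \eqref{eq:state-fin-new} with initial datum $y(s)$, so Proposition \ref{prop:solcontinua} applies.

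Finally, for uniqueness, suppose $y_1,y_2$ are two solutions and let $w := y_1 - y_2 \in C(\,]-\infty,t];X)$. The control terms cancel in \eqref{eq:defsolution-new}, so $w(r_2) = e^{(r_2-r_1)A} w(r_1)$ for every $r_1 \le r_2 \le t$. Since $w$ is continuous on $(-\infty,t]$ with $\lim_{s\to-\infty} w(s) = 0$, $w$ is bounded on $(-\infty,t]$ by some constant $C$. Using \eqref{eq:AtipoMomega},
\begin{equation*}
\|w(r_2)\|_X \le M e^{-\omega(r_2-r_1)} \|w(r_1)\|_X \le CM e^{-\omega(r_2-r_1)} \xrightarrow[r_1\to-\infty]{} 0,
\end{equation*}
so $w(r_2) = 0$ for every $r_2 \le t$. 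I do not foresee a major obstacle here; the only subtle point is remembering that the boundary condition at $-\infty$ automatically supplies the boundedness that drives the uniqueness argument, exploiting the exponential stability $\omega>0$ in a crucial way.
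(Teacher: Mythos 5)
Your proposal is correct and follows essentially the same route as the paper: verify that the candidate formula is well defined, satisfies \eqref{eq:defsolution-new} via the semigroup splitting of the integral, tends to $0$ at $-\infty$ thanks to \eqref{eq:AtipoMomega}, and then obtain uniqueness from $w(r_2)=e^{(r_2-r_1)A}w(r_1)$ combined with the decay of the semigroup and the boundary condition. The only (harmless) cosmetic differences are that you deduce continuity by reducing to Proposition \ref{prop:solcontinua} instead of estimating $y(r_2)-y(r_1)$ directly, and that in the uniqueness step you use boundedness of $w$ plus exponential decay where the paper lets $\|w(r_0)\|_X\to 0$ directly.
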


\begin{proof}
We prove first that the function $y(\cdot;-\infty,0,u)$ given by
(\ref{eq:mild-inf-new}) is continuous. Fixed $r_1<r_2\leq t$, we have
\begin{eqnarray*}
\lefteqn{y(r_2;-\infty,0,u)-y(r_1,-\infty,0,u) =}\\
& & = \int_{-\infty }^{r_2}e^{(r_2-\tau)A}B u(\tau)\, \ud \tau- \int_{-\infty
}^{r_1}e^{(r_1-\tau)A}B u(\tau)\, \ud \tau = \\
& & = \int_{-\infty}^{r_1}\left(e^{(r_2-r_1)A}-I\right) e^{(r_1-\tau)A} B u(\tau)\, \ud \tau +\int_{r_1}^{r_2} e^{(r_2-\tau)A}Bu(\tau)\, \ud \tau,
\end{eqnarray*}
and then continuity follows by standard
arguments. We now prove that (\ref{eq:defsolution-new}) holds. For $-\infty
< r_1\leq r_2 \leq t$, we have
\begin{eqnarray*}
y(r_2;-\infty,0,u) & = & \int_{-\infty }^{r_2}e^{(r_2-\tau) A}Bu(\tau)\, \ud \tau = \\
& = & e^{(r_2-r_1)A} \int_{-\infty}^{r_1} e^{(r_1-\tau)A} Bu(\tau)\, \ud
\tau + \int_{r_1}^{r_2} e^{(r_2-\tau)A}B u(\tau)\, \ud \tau=\\
& = & e^{(r_2-r_1)A} y(r_1;-\infty,0,u) + \int_{r_1}^{r_2}
e^{(r_2-\tau)A} B u(\tau)\, \ud \tau,
\end{eqnarray*}
so (\ref{eq:defsolution-new}) is satisfied. Moreover letting $r\to
-\infty$, since $u\in L^2(-\infty,t;U)$ and thanks to equation
(\ref{eq:AtipoMomega}), we have $y(r;-\infty,x,u)
\to 0$ as ${r\to-\infty}$.

In order to prove uniqueness, consider
two solutions $y_1(\cdot)$ and $y_2(\cdot)$ and a point $r\in
(-\infty,t)$. Since $y_1(\cdot)$ and $y_2(\cdot)$ satisfy
(\ref{eq:defsolution-new}), for their difference we have, for
$r_0<r<t$,
$$\|y_1(r) - y_2(r)\|_{X} = \|e^{(r-r_0)A}(y_1(r_0) - y_2(r_0))\|_X \leq
M\,e^{-(r-r_0)\omega} \|(y_1(r_0) - y_2(r_0))\|_X\,.
$$
As $y_1(\cdot)$ and $y_2(\cdot)$ satisfy
(\ref{eq:defsol-dato-new}), letting $r_0\to - \infty$ above we get $y_1(r)=y_2(r)$ for every $r<t$.
\end{proof}

\begin{Remark}
{\em
Notice that, if the initial condition \eqref{eq:defsol-dato-new} is not zero, then the above equation cannot have any solution. Indeed any solution $y(\cdot;-\infty,x,u)$ of the state equation (\ref{eq:state-inf-new}), with $0$ replaced by $x \in X\setminus\{0\}$ in \eqref{eq:defsol-dato-new}, must satisfy (\ref{eq:defsolution-new})
and $\lim_{s \to - \infty}y(s)=x$. But, as $r_1 \to - \infty$, (\ref{eq:defsolution-new})
implies, as in (\ref{eq:mild-inf-new}), that
\begin{equation}
y(r_2;-\infty,x,u):=\int_{-\infty
}^{r_2}e^{(r_2-\tau)A}B u(\tau) \ud \tau, \qquad r_2 \le t.
\end{equation}
Taking the limit as $r_2 \to - \infty$ we get $x=0$, a contradiction.
}
\hfill\qedo
\end{Remark}



\subsection{Minimum energy problems with infinite horizon
and associated Riccati equation}
\label{SSE:introinfhor}

To better clarify our results we state, roughly and informally, the mathematical problem (see Section \ref{sub:infhor} for a precise description).
The state space $X$ and the control space $U$ are both real separable Hilbert spaces.
We take the linear controlled system in $X$
\begin{equation}\label{storinf}
\left\{ \begin{array}{ll} y'(s)=Ay(s)+Bu(s), \quad s\in \,]-\infty,0], \\[2mm]
y(-\infty) = 0, \end{array}\right.
\end{equation}
where $A: {\cal D}(A) \subset X \to X$ generates a strongly continuous semigroup
and $B:U\to X$ is a linear, possibly unbounded operator.
Given a point $x \in X$ we consider the set ${\cal U}_{[-\infty,0]}(0,x) $ of all
square integrable control strategies that drive the system from the equilibrium state $0$
(at time $t=-\infty$) into the generic non-equilibrium state $x$ (at time
$t=0$). It is well known (see Proposition \ref{pr:optcoupleinfty}) that the set
${\cal U}_{[-\infty,0]}(0,x) $ is nonempty if and only if $x\in H$, where $H$
is a suitable subspace of $X$ that can be endowed with its own Hilbert structure
(see next subsection for the precise definition of $H$ and Subsection \ref{SSE:SPACEH} for its properties).

We want to minimize the ``energy-like'' functional
\begin{equation}\label{fzorinf}
J_{[-\infty,0]}(u) =\frac12 \int_{-\infty}^0 \|u(s)\|^2_U\, \ud s.
\end{equation}
As usual the value function $V_\infty$ is defined as
\begin{equation}\label{eq:VFinfhornew}
V_\infty(x) = \inf_{u\in {\cal U}_{[-\infty,0]}(0,x)} J_{[-\infty,0]}(u),
\end{equation}
and it is finite only when $x \in H$.

The peculiarity of the problem with respect to the most studied
minimum energy problems in Hilbert spaces
(see e.g. \cite{Carja93} \cite{DaPratoPritchardZabczyk91},
\cite{Emir89,Emir95}, \cite{GozziLoreti99}, \cite{PriolaZabczyk03},
and the general surveys \cite{BDDM07}, \cite{CurtainPritchardbook},
\cite{LTbook1,LTbook2}, \cite{Zabczyk92})
is that it gives rise to an Algebraic
Riccati Equation with a `wrong' sign in the linear term to which, to our knowledge, the
standard theory developed in the current literature does not apply.

Indeed, the associated ARE in $X$ (with unknown $R$), which can be found applying the dynamic programming principle, is, formally,
\begin{equation}\label{eq:algriccintroX}
0=-\langle Ax, Ry\rangle_X -\langle Rx, Ay\rangle_X- \langle
B^*R x, B^*Ry \rangle_U\,, \quad x,y \in {\cal D}(A)\cap {\cal D}(R) .
\end{equation}
Since $R$ is unbounded (this comes from the fact that $V_\infty$ is defined only in $H$), it is convenient to rewrite (\ref{eq:algriccintroX}) in $H$ (with unknown $P$, which is now a bounded operator on $H$). This way we get the equation
\begin{equation}\label{eq:algriccintroH}
0=-\langle Ax, Py\rangle_H -\langle Px, Ay\rangle_H- \langle
B^*Q_\infty^{-1}P x,B^*Q_\infty^{-1}Py \rangle_U,
\end{equation}
or, transforming the inner products in $H$ into inner products in $X$,
\begin{equation}\label{eq:algriccintro}
0=-\langle Ax,Q_\infty^{-1} Py\rangle_X -\langle Q_\infty^{-1}Px, Ay\rangle_X- \langle
B^*Q_\infty^{-1}P x,B^*Q_\infty^{-1}Py \rangle_U,
\end{equation}
In the last two equations $Q_\infty$ is the so-called controllability operator (see \eqref{eq:contropintro}) and $Q_\infty^{-1}$ denotes its pseudoinverse, which is, in general, unbounded. Moreover the last two equations make sense for $x,y$ belonging to suitable sets to be specified later on.
For more details on how these equations arise, the definitions of solution, and the relations among them, see the discussion at the beginning of Subsection \ref{SSE:ARE}. Here we just observe that the form of equation \eqref{eq:algriccintro} turns out to be more suitable to prove our main results.


The `wrong' sign in the linear term\footnote{Evidently the two terms in equation \eqref{eq:algriccintro} (or \eqref{eq:algriccintroX}, or \eqref{eq:algriccintroH}) have the same sign, while in the standard case they do not. We infer that the `wrong' sign is in the linear term looking at the corresponding finite horizon problem in \cite{AcquistapaceGozzi17}.}
of (\ref{eq:algriccintro}) (or \eqref{eq:algriccintroX}, or \eqref{eq:algriccintroH}) does not allow us to approach it using the standard method (described e.g. in \cite[pp. 390-394 and 479-486]{BDDM07}, see also
\cite[p.1018]{PriolaZabczyk03}), which consists in
taking the associated evolutionary Riccati equation, proving that it
has a solution $P(t)$ (using an {\it a priori} estimate, due to the fact that of both the linear and the quadratic terms have the same sign), and taking the limit of $P(t)$ as $t\to \infty$.

On the other hand the `wrong' sign comes from the nature of the
motivating problem: to look at minimum energy paths from
equilibrium to non-equilibrium states (see Section \ref{PHYSICS}),
which is the opposite direction of the standard one considered
in the above quoted papers.
This means that the value function depends on the final point, while
in the above quoted problems it depends on the initial one (see Remark
\ref{rm:timereversed} to see what happens to our auxiliary problem using a time inversion).
Therefore we are driven to use a different approach, that exploits
the structure of the problem; we partly borrow some ideas from \cite{PriolaZabczyk03}
and from the literature about model reduction\footnote{We
thank prof. R. Vinter for providing us these references.} (see e.g. \cite{Moore81} and \cite{Scherpen93}: indeed our results partly
generalize Theorem 2.2 of \cite{Scherpen93}, see Remark \ref{rm:findimScherpen}).



\subsection{The method and the main results}
\label{SSE:METHOD}

We now briefly explain our approach.
First of all we consider the associated finite horizon problem (which has been studied in the companion paper \cite{AcquistapaceGozzi17} whose results, for the part needed here, are recalled in Appendix \ref{sub:finitehorizon}), where the
state equation is
\begin{equation}\label{eq:stateintrofinhor}
\left\{ \begin{array}{ll} y'(r)=Ay(r)+Bu(r), \quad r\in \,]-t,0], \\[2mm]
y(-t) = 0, \end{array}\right.
\end{equation}
and the energy to be minimized is
\begin{equation}\label{eq:funzintrofinhor}
J_{[-t,0]}(u) = \frac12\int_{-t}^0 \|u(r)\|^2_U\, \ud r.
\end{equation}
The value function is
\begin{equation}\label{eq:ValueFunctionFinHor}
V(t,x)=\inf_{u\in {\cal U}_{[-t,0]}(0,x)} J_{[-t,0]}(u),
\end{equation}
where
\begin{equation}\label{eq:defcalUnew}
{\cal U}_{[-t,0]}(0,x)=\{ u \in L^2(-t,0;U): \; y(0)=x\}.
\end{equation}
We now recall the well known expression of the controllability operator
\begin{equation}\label{eq:contropintro}
Q_t x= \int_0^t e^{rA}BB^*e^{rA^*}x\, \ud r, \quad x\in X, \quad t\in [0,+\infty ].
\end{equation}
It is well known (see e.g \cite[Part IV, Theorem 2.3]{Zabczyk92}) that, for $t\ge 0$, the reachable set of the control systems \eqref{eq:stateintrofinhor} (finite horizon case)
and \eqref{eq:state-inf-new} (infinite horizon case),
is $\mathcal {\cal R}(Q^{1/2}_t)$, i.e. the range of $Q_t^{1/2}$ ($t \in [0,+\infty]$).
This is clearly the set where the value functions $V$ of \eqref{eq:ValueFunctionFinHor}
(finite horizon case) and $V_\infty$ of \eqref{eq:VFinfhornew} (infinite horizon case) are well defined. Moreover, as pointed out, e.g., in \cite[Proposition C.2-(i)]{AcquistapaceGozzi17}, for $0\le t_1\le t_2$ we have
$\mathcal {\cal R}(Q^{1/2}_{t_1})\subseteq \mathcal {\cal R}(Q^{1/2}_{t_2})$.
It will be often useful to assume, beyond Hypothesis \ref{hp:main}, also the following
null controllability assumption.
\begin{Hypothesis}\label{NC}
There exists $T_0\ge 0$ such that
\begin{equation}\label{eq:null-control}
\calr(e^{T_0A})\subseteq \calr(Q_{T_0}^{1/2}).
\end{equation}
\end{Hypothesis}
Under such assumption we get, for $t\ge T_0$,
$$
\mathcal {\cal R}(Q^{1/2}_{t_1})= \mathcal {\cal R}(Q^{1/2}_{t_2}), \qquad  T_0\le t_1\le t_2\le +\infty.
$$
Consequently
\begin{equation}\label{eq:kerNC}
\ker Q_{t_1}=\ker Q^{1/2}_{t_1}= \ker Q^{1/2}_{t_2}=\ker Q_{t_1}\,,
\qquad T_0\le t_1\le t_2\le +\infty.
\end{equation}
We can now introduce the already announced space $H$.
We define
\begin{equation}\label{eq:defH}
H=\mathcal {\cal R}(Q_\infty^{1/2}).
\end{equation}
Of course it holds
$$H \subseteq \overline{\mathcal {\cal R}(Q_\infty^{1/2})}= [\ker Q_\infty^{1/2}]^\perp =[\ker Q_\infty]^\perp.$$
The inclusion is in general proper.
Define in $H$ the inner product
\begin{equation}\label{eq:innerproductH}
\langle x, y\rangle_H = \langle Q_\infty^{-1/2}x,Q_\infty^{-1/2}y \rangle_X\,, \qquad x,y\in H.
\end{equation}
Some useful results on the space $H$ which form the ground for our main results and are partly proved in \cite{AcquistapaceGozzi17}, are recalled (and proved, when needed) in Appendix \ref{SSE:SPACEH}.

Using $H$ as the ground space we know (see \cite[Proposition 4.8-(ii)]{AcquistapaceGozzi17}) that $V(t,x)= \frac12\langle P(t)x,x \rangle_H$\,,
where $P(t)$ is a suitable extension of $Q_\infty Q_t^{-1}$ (here $Q_t^{-1}$ is the pseudoinverse of $Q_t$, see \cite[Appendix A]{AcquistapaceGozzi17} or \cite[Part IV, end of Section 2.1]{Zabczyk92}).
Moreover, using this explicit expression it is proved in  \cite[Theorem 4.12]{AcquistapaceGozzi17} that $P(t)$ solves the following Riccati equation in $H$:
\begin{equation} \label{eq:RiccatiHintro}
\frac{d}{dt}\langle P(t)x,y \rangle_H \hspace{-1mm}=\hspace{-1mm} -\langle Ax, P(t)y\rangle_H\hspace{-1mm} -\hspace{-1mm}\langle P(t)x, Ay\rangle_H\hspace{-1mm} -\hspace{-1mm} \langle {B}^*Q_\infty^{-1}P(t)x,{B}^*Q_\infty^{-1}P(t)y \rangle_U,\quad t>0,
\end{equation}
whose natural condition at $t=0$ is, heuristically,
$$
\lim_{t \to 0^+}P(t)=+\infty.
$$
It is not difficult to prove (see
Proposition \ref{pr:VlimitVt}) that
\begin{equation}\label{eq:VinftylimVt}
V_\infty (x)= \lim_{t \to + \infty} V(t,x).
\end{equation}
and that $V_\infty (x) = \frac12\langle  x,x \rangle_H$. This allows to prove that $P=I_H$ (the identity on $H$) solves the ARE (\ref{eq:algriccintro}) in $H$ (Theorem \ref{th:maximalARE}-(ii)).

However, due to the infinite initial condition at $0$ of $P(t)$ (similarly to what happens in \cite{PriolaZabczyk03}), the above limit
does not help to prove any comparison theorem for (\ref{eq:algriccintro}).
Here comes the main difficulty since, even in very simple cases, it is not known in the literature whether the ARE characterizes $V_\infty $ or not (see e.g. \cite{BDGJL4}). To get a comparison result we proceed
as follows.
\begin{itemize}
\item
We first introduce a suitable auxiliary problem (beginning of Section \ref{sub:ausprob}).
\item
Next, we prove a comparison result for the auxiliary problem (Subsection \ref{sub:MaxPN}, Lemma \ref{lem:massimalitaN}).
\item
Finally we use the relation among the auxiliary problem and the original problem
to prove our main maximality result (Theorem \ref{th:maximalARE}).
\end{itemize}
The idea of introducing an auxiliary problem is exploited in \cite{PriolaZabczyk03}, too.
However the method used there cannot work here, due to the different sign of the linear part of our equation.

%
%
%
%
%

\section{The auxiliary problem}\label{sub:ausprob}


In this section we introduce an auxiliary problem which can be considered a ``time reversed'' version of the auxiliary problem considered in \cite{PriolaZabczyk03} (see also Remark \ref{rm:timereversed} about this).
This problem will be a key tool to prove the main result, Theorem \ref{th:maximalARE}.
Indeed, as we will see, any solution of our Algebraic Riccati Equation (\ref{eq:algriccintro}) is also, under appropriate assumptions, a solution of this auxiliary problem with itself as initial datum; a comparison argument will then allow to get the main result.\\
Throughout this section Hypothesis \ref{hp:main} will be always assumed, while Hypothesis \ref{NC} will be used when necessary.\\
Let us consider, for $x \in X$, the following set of controls:
\begin{equation}\label{eq:defUbarx}
\overline{{\cal U}}_{[-t,0]}(x) = \{(z,u)\in H\times L^2(-t,0;U):
y(0)=x\},
\end{equation}
where $y(\cdot):=y(\cdot;-t,z,u)$ is the solution of the Cauchy problem (similar to \eqref{eq:stateintrofinhor} but with generic initial datum $z$)
\begin{equation} \label{eq:state-newN}
\left\{
\begin{array}{l}
y'(r)=Ay(r)+Bu(r), \quad r\in \,]-t,0], \\
y(-t) = z.
\end{array}
\right.
\end{equation}
Note that a control in $\overline{{\cal U}}_{[-t,0]}(x)$ is a couple: an initial point $z\in H$ and a control $u\in {\cal U}_{[-t,0]}(z,x)$, where
\begin{equation}\label{eq:defcalUnewbis}
{\cal U}_{[-t,0]}(z,x)=\{ u \in L^2(-t,0;U): \; y(0;-t,z,u)=x\}.
\end{equation}
(this is similar to the set (\ref{eq:defcalUnew}) but with a generic initial datum $z$). The following is true:

\begin{Proposition}\label{pr:Ubarnonempty}
Define the reachable set from the point $z$ as
\begin{equation}\label{eq:ptiragg-newprima}
{\mathbf R}_{[-t,0]}^z:= \left\{ x \in X:\ {\cal U}_{[-t,0]}(z,x) \neq \emptyset \right\}.
\end{equation}
and set
\begin{equation}\label{eq:defRbarra}
\bar{\mathbf R}_{[-t,0]} := \bigcup_{z\in H}
{\mathbf R}^z_{[-t,0]}.
\end{equation}
Then the set $\overline{{\cal U}}_{[-t,0]}(x)$ introduced in \eqref{eq:defUbarx}
is nonempty if and only if $x\in\bar{\mathbf R}_{[-t,0]}$.
Moreover we have
\begin{equation}\label{eq:RbarrainH}
\bar{\mathbf R}_{[-t,0]} \subseteq H,
\end{equation}
with equality for $t\ge T_0$, if Hypothesis \ref{NC} holds.
\end{Proposition}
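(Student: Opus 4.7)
\textbf{Plan of proof for Proposition \ref{pr:Ubarnonempty}.}

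The first equivalence is essentially a matter of unpacking the definitions. By \eqref{eq:defUbarx}, a pair $(z,u)$ belongs to $\overline{{\cal U}}_{[-t,0]}(x)$ iff $z\in H$ and $u\in {\cal U}_{[-t,0]}(z,x)$. Hence $\overline{{\cal U}}_{[-t,0]}(x)\ne\emptyset$ iff there exists $z\in H$ with $ {\cal U}_{[-t,0]}(z,x)\ne\emptyset$, i.e.\ iff $x\in {\mathbf R}^z_{[-t,0]}$ for some $z\in H$, which by \eqref{eq:defRbarra} is exactly $x\in \bar{\mathbf R}_{[-t,0]}$.

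For the inclusion $\bar{\mathbf R}_{[-t,0]}\subseteq H$, the key idea is a concatenation of controls. Take $x\in\bar{\mathbf R}_{[-t,0]}$: there exist $z\in H=\calr(Q_\infty^{1/2})$ and $u^0\in L^2(-t,0;U)$ with $y(0;-t,z,u^0)=x$. Since $z$ belongs to the reachable set from $0$ over the infinite horizon (by Lemma \ref{lm:existencesol-new} and the characterization $H=\calr(Q_\infty^{1/2})$), one can find $u^1\in L^2(-\infty,-t;U)$ such that $y(-t;-\infty,0,u^1)=z$. Define $u\in L^2(-\infty,0;U)$ by concatenation, $u=u^1$ on $(-\infty,-t)$ and $u=u^0$ on $[-t,0]$. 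Using the semigroup identity \eqref{eq:defsolution-new}, a direct computation gives
\[
y(0;-\infty,0,u)=e^{tA}z+\int_{-t}^0 e^{-\tau A}Bu^0(\tau)\,\ud\tau=x,
\]
so $x$ is reachable from $0$ in infinite time and therefore $x\in\calr(Q_\infty^{1/2})=H$.

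For the reverse inclusion under Hypothesis \ref{NC}, fix $t\ge T_0$ and $x\in H$. By the consequence of Hypothesis \ref{NC} already recorded in the text, $\calr(Q_t^{1/2})=\calr(Q_\infty^{1/2})=H$ for every $t\ge T_0$. Hence $x$ is reachable from $0$ in time $t$, i.e.\ there exists $u\in L^2(-t,0;U)$ with $y(0;-t,0,u)=x$. Since $0\in H$, the pair $(0,u)$ witnesses that $x\in {\mathbf R}^0_{[-t,0]}\subseteq \bar{\mathbf R}_{[-t,0]}$.

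The only non-routine point is the concatenation step in part (ii): one must make sure that the piecewise control, obtained from the infinite-horizon solution driving $0$ to $z\in H$ and the finite-horizon solution driving $z$ to $x$, indeed produces a well-defined infinite-horizon trajectory reaching $x$ at time $0$. This is settled by invoking the variation-of-constants formula \eqref{vardcost} on $[-t,0]$ together with the integral identity \eqref{eq:defsolution-new} furnished by Lemma \ref{lm:existencesol-new} on $(-\infty,-t]$, and noting that the $L^2$-norm of the concatenation is just the sum of the $L^2$-norms of the two pieces.
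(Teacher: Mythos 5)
Your proof is correct, and for the inclusion $\bar{\mathbf R}_{[-t,0]}\subseteq H$ it takes a genuinely different route from the paper. The paper decomposes $x={\mathbf R}^z_{[-t,0]}\ni e^{tA}z+\call_{-t,0}(u^0)$ via \eqref{eq:ptiragg-newbis} and shows each summand separately lies in $H$: the second because $\calr(\call_{-t,0})=\calr(Q_t^{1/2})\subseteq\calr(Q_\infty^{1/2})$ by \eqref{eq:ptiragg-Q12}, and the first by invoking the invariance of $H$ under the semigroup (Lemma \ref{exptAHH}-(i)). You instead absorb $e^{tA}z$ into an infinite-horizon reachability statement by writing $z=\call_{-\infty,-t}(u^1)$ and concatenating, so that $x=\call_{-\infty,0}(u)\in\calr(\call_{-\infty,0})=\calr(Q_\infty^{1/2})=H$. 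Both arguments ultimately rest on the same key identity \eqref{eq:ptiragg-Q12} extended to $t=+\infty$ (which is also what justifies your claim that every $z\in H$ is exactly reachable from $0$ over $(-\infty,-t]$ — it would be cleaner to cite that identity rather than Lemma \ref{lm:existencesol-new}); but your version buys something: it nowhere uses Lemma \ref{exptAHH}-(i), which the paper states under Hypothesis \ref{NC}, so your proof makes it transparent that the inclusion \eqref{eq:RbarrainH} holds under Hypothesis \ref{hp:main} alone. The first equivalence and the equality for $t\ge T_0$ under Hypothesis \ref{NC} are handled exactly as in the paper.
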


\begin{proof} The first statement is an immediate consequence
of the definition of reachable set
in (\ref{eq:ptiragg-newprima}).
The second one follows from (\ref{eq:ptiragg-newbis}), Lemma \ref{exptAHH}-(i),
the fact that $\calr(Q_{t}^{1/2})\subseteq \calr(Q_{\infty}^{1/2})$
(with equality, for $t\ge T_0$, when Hypothesis \ref{NC} holds),
and the equality
\begin{equation}
\label{eq:ptiragg-Q12} {\cal R}\left(\call_{-t,0}\right)=
{\mathbf R}^0_{[-t,0]}=\calr(Q_{t}^{1/2}), \qquad  t \in [0,+\infty]
\end{equation}
(here $\call_{-t,0}$ is the operator defined in \eqref{eq:newLst}),
which is proved in \cite[Theorem 2.3]{Zabczyk92} for $t<+\infty$.
Such equality holds also when $t=+\infty$ with exactly the same proof.
\end{proof}


Given a bounded selfadjoint positive operator $N$ on $H$ we want to minimize, in the class $\overline{{\cal U}}_{[-t,0]}(x)$, the
following functional with an initial cost:
\begin{equation}\label{funzN}
J^N_{[-t,0]}(z,u) = \frac12 \langle Nz,z \rangle_H + \frac12 \int_{-t}^0 \|u(s)\|_U^2\, \ud s.
\end{equation}
The presence of the operator $N\in \call_+(H)$ forces us to fix the starting point $z$ at time $-t$ in $H$, rather than in $X$.
Define
\begin{equation}\label{eq:defVN-new}
V^N(t,x)=\inf_{(z,u) \in \overline{{\cal U}}_{[-t,0]}(x)} J^N_{[-t,0]}(z,u) = \inf_{z\in H} \left[ \inf_{u\in {\cal U}_{[-t,0]}(z,x)}
J^N_{[-t,0]}(z,u) \right], \ t>0, \ x\in X,
\end{equation}
with the agreement that the infimum over the emptyset is $+\infty$,
so that $V^N(t,x)$ is finite only when $x\in H$.
Now we provide a relation between $V^N$ and the value function $V$ defined in
\eqref{eq:ValueFunctionFinHor}.

\begin{Proposition}\label{pr:VNeV}
We have
\begin{equation}\label{eq:defVN-newbis}
V^N(t,x)=\inf_{z\in H} \left[ V(t,x-e^{tA}z) + \frac12 \langle Nz,z
\rangle_H  \right], \quad t>0, \ x\in X
\end{equation}
and, in particular,
\begin{equation} \label{eq:V0etVN-new}
V^N(t,x) \le V(t,x)
\qquad \forall x\in X, \quad \forall t>0.
\end{equation}
\end{Proposition}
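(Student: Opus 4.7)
The plan is to exploit the linearity of the state equation to reduce the minimization over $\overline{\mathcal{U}}_{[-t,0]}(x)$ to the already-understood finite horizon minimization with zero initial datum. Concretely, I will fix $z\in H$ and rewrite the inner infimum in the double infimum appearing in \eqref{eq:defVN-new} as a value function $V$ applied to a translated endpoint.

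The key observation is the following superposition identity: for any $z\in H$ and $u\in L^2(-t,0;U)$, the mild solution of \eqref{eq:state-newN} satisfies
\begin{equation*}
y(0;-t,z,u)=e^{tA}z+\int_{-t}^{0}e^{-\sigma A}Bu(\sigma)\,\ud\sigma=e^{tA}z+y(0;-t,0,u).
\end{equation*}
From this I read off the equivalence
\begin{equation*}
u\in\mathcal{U}_{[-t,0]}(z,x)\quad\Longleftrightarrow\quad u\in\mathcal{U}_{[-t,0]}(0,x-e^{tA}z),
\end{equation*}
so the two admissible sets coincide as subsets of $L^2(-t,0;U)$. Note that by Lemma \ref{exptAHH}-(i), $e^{tA}z\in H$ when $z\in H$, so the subtraction $x-e^{tA}z$ is well-defined in $X$; if the translated endpoint does not belong to the reachable set $\mathcal{R}(Q_t^{1/2})$ then both sets are empty and the corresponding infimum is $+\infty$ by our convention.

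Substituting this identity into the inner infimum in \eqref{eq:defVN-new} gives, for each fixed $z\in H$,
\begin{equation*}
\inf_{u\in\mathcal{U}_{[-t,0]}(z,x)}J^N_{[-t,0]}(z,u)=\frac12\langle Nz,z\rangle_H+\inf_{u\in\mathcal{U}_{[-t,0]}(0,x-e^{tA}z)}\frac12\int_{-t}^{0}\|u(s)\|_U^2\,\ud s=\frac12\langle Nz,z\rangle_H+V(t,x-e^{tA}z),
\end{equation*}
since the quadratic cost does not depend on the initial point. Taking the infimum over $z\in H$ yields \eqref{eq:defVN-newbis}. Finally, choosing $z=0\in H$ in the right-hand side of \eqref{eq:defVN-newbis} gives $\frac12\langle N0,0\rangle_H+V(t,x)=V(t,x)$, which proves the bound \eqref{eq:V0etVN-new}.

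I do not expect a substantive obstacle: once the superposition identity and the convention on the infimum over the empty set are in place, the rest is bookkeeping. The only minor point to verify carefully is that the case $x-e^{tA}z\notin\mathcal{R}(Q_t^{1/2})$ is handled consistently on both sides (both give $+\infty$), which is immediate from Proposition \ref{pr:Ubarnonempty} and \eqref{eq:ptiragg-Q12}.
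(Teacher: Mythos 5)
Your proof is correct and follows essentially the same route as the paper: the paper simply invokes the translation identity \eqref{eq:V1V1} (imported from the companion paper) to write the inner infimum as $V(t,x-e^{tA}z)+\frac12\langle Nz,z\rangle_H$, whereas you rederive that identity directly from the superposition formula for the mild solution. The handling of the empty admissible set and the choice $z=0$ are identical.
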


\begin{proof}
We use \eqref{eq:valuefunction-new}, (\ref{eq:V1V1}) and \eqref{eq:defV0-new}
getting
$$
\inf_{u\in {\cal U}_{[-t,0]}(z,x)} J^N_{[-t,0]}(z,u)
= V_1(-t,0;z,x) + \frac12 \langle Nz,z \rangle_H = V(t,x-e^{tA}z) + \frac12 \langle Nz,z \rangle_H\,.
$$
This equality immediately implies \eqref{eq:defVN-newbis}.
Taking $z=0$ we get \eqref{eq:V0etVN-new}.
\end{proof}


The following remark is crucial to understand what is the ``natural'' Riccati equation associated to this auxiliary problem.

\begin{Remark}
\label{rm:timereversed}
{\em If $A$ generates not just a $C_0$-semigroup but a $C_0$-group,
the auxiliary problem can be shown, under appropriate assumptions, to be equivalent, reversing the time, to a standard optimization problem with final cost. Indeed, given $x \in H$, consider the problem of minimizing, over all
$v(\cdot) \in L^2(0,t;U)$, the functional
\begin{equation}\label{funzNrev}
\widehat J^N_{[0,t]}(x,v) = \frac12 \langle Nw(t),w(t) \rangle_H + \frac12 \int_0^{t} \|v(s)\|_U^2\, \ud s,
\end{equation}
where $w(\cdot):=w(\cdot;0,x,v)$ is the mild solution of the Cauchy problem
\begin{equation}\label{eqNrev}
w'(s)= -Aw(s)+Bv(s), \quad s\in \,]-t,0], \qquad w(0)=x.
\end{equation}
Assume now that, for every $x \in H$, the mild solution $w(\cdot;0,x,v)$
belongs to $H$ for every $t>0$.
Setting
$$\widehat V^N(t,x)= \inf_{v\in L^2(0,t;U)}\widehat J^N_{[0,t]}(x,v),$$
it can be seen that
$$\widehat V^N(t,x)=V^N(t,x).$$
To see this, fix $(t,x)\in [0,+\infty[ \,\times H$ and recall that, for every
$(z,u)\in \overline{\cal U}_{[-t,0]}(x)$, we have
$$e^{tA}z+\int_{-t}^0 e^{-sA}Bu(s)ds=x \quad \Longleftrightarrow \quad
z+\int_{-t}^0 e^{(-t-s)A}Bu(s)ds=e^{-tA}x;$$
hence, changing variable in the integral,
$$z=e^{t(-A)}x +\int_0^{t} e^{(t-s)(-A)}B(-u(-s))ds.$$
This means that $\bar {\mathbf{R}}_{[-t,0]}=H$ (see \eqref{eq:defRbarra}).
Moreover, to any $(z,u)\in \overline{\cal U}_{[-t,0]}(x)$ we can associate a function $v\in L^2(0,t;U)$ such that $w(t)=z$, namely, $v(s)=-u(-s)$; consequently
\beq\label{pb_equiv}
J^N_{[-t,0]}(z,u)= \widehat J^N_{[0,t]}(x,v).
\eeq
Conversely, given any $v\in L^2(0,t;U)$, set $z=w(t;0,x,v)$ and $u(s)=-v(-s)$: then, clearly, $(z,u)\in \overline{\cal U}_{[-t,0]}(x)$ and, again, \eqref{pb_equiv} holds. In conclusion, there is a one-to-one correspondence between the control set of the two problems and, in particular, $\widehat V^N(t,x)= V^N(t,x)$.}
\hfill\qedo
\end{Remark}
The equation for the ``time-reversed'' problem (\ref{funzNrev})-(\ref{eqNrev}) turns out to be the following:
\begin{equation}\label{eq:RiccatiNH}
\left\{\begin{array}{ll} \displaystyle \frac{d}{ds}\langle  P^N(s)x,y \rangle_H  = & -\langle Ax, P^N(s)y\rangle_H -\langle P^N(s)x, Ay\rangle_H
-\\[2mm]
& -\langle {B}^*Q_\infty^{-1} P^N(s)x,{B}^*Q_\infty^{-1} P^N(s)y \rangle_U\,,\qquad s\in\,]0,t], \\[2mm]
P^N(0)=N.
\end{array}\right.
\end{equation}
To give sense to \eqref{eq:RiccatiNH} we must take $x,y\in {\cal D}(A)\cap H$ with $Ax,Ay\in H$ and $P^N(t)x,P^N(t)y\in \calr(Q_\infty)$.
When ${B}^*Q_\infty^{-1}$ can be extended to a bounded operator $H \to U$
and $A$ generates a group, then it is known that the value function
$\widehat V^N$ is quadratic and $\widehat V^N(t,x)=\langle\widehat P^N(t)x,x\rangle_H$, where $\widehat P^N:[0,+\infty[ \to \Lc_+(H)$ is the unique solution of (\ref{eq:RiccatiNH}).
In our case this is not obvious, but it suggests anyway the right form of the Riccati equation for our auxiliary problem.
Note, finally, that the right hand side of (\ref{eq:RiccatiNH})
is exactly one of the forms of the ARE we aim to study (see \eqref{eq:algriccintroH}).

\begin{Remark}\label{Riccstrana2}
{\em As in the case $N=0$ treated in \cite{AcquistapaceGozzi17}, in the above Riccati equations the sign of the linear part is opposite to the usual one. In fact the control problem (\ref{eq:state-newN})-(\ref{funzN}) involves an ``initial cost'',
instead of a final cost like in the standard problems (see e.g. \cite{PriolaZabczyk03}). }
\hfill\qedo
\end{Remark}
Our aim now is to prove that for every stationary solution $Q$ of the Riccati equation \eqref{eq:RiccatiNH} (in a suitable class to be defined later) there exists an operator $N$, namely $Q$ itself, such that
$$\frac12 \<Qx,x\>_H \le V^N(t,x), \qquad \hbox{for sufficiently large $t$.}$$

\begin{Remark}
{\em
It is possible to prove much more about the auxiliary problem, namely:
\begin{itemize}
\item[(i)] that, for every $N\in \call_+(H)$ the value function $V^N$ is continuous and is a quadratic form in $H$;
\item[(ii)] that, when $N$ is coercive (i.e., for some $\nu>0$,
$\<Nx,x\>_H\ge \nu |x|^2_H$ for all $x \in H$), the linear operator $P^N$ associated to the value function solves the Riccati equation \eqref{eq:RiccatiNH};
\item[(iii)] that the comparison result mentioned above translates in the
inquality $P^N\ge Q^N$, in the preorder of positive operators, for every constant solution $Q^N$ of the Riccati equation \eqref{eq:RiccatiNH}
in a suitable class.
\end{itemize}
This is the subject of a paper in progress.}
\end{Remark}



\subsection{A key comparison result}
\label{sub:MaxPN}
Given any initial datum $N\in \Lc_+(H)$, we want to compare the
``stationary'' solutions of the Riccati equation (\ref{eq:RiccatiNH}) with the value function $V^N$ of the auxiliary problem.
This fact will be used, in the next section, as a key tool to prove our main results.
In order to do this we need first to give a precise meaning to the concept of stationary solution of (\ref{eq:RiccatiNH}).

Roughly speaking, a stationary solution $P\in \Lc_+(H)$ of the Riccati Equation \eqref{eq:RiccatiNH} should also be a solution of the following
Algebraic Riccati Equation (ARE), which comes from the right hand side of \eqref{eq:RiccatiNH}:
\beq
\label{eq:AREperLemmaMaximality0}
0=-\langle Ax,Py\rangle_H - \langle Px,Ay\rangle_H
-\langle B^*Q_\infty^{-1}Px,B^*Q_\infty^{-1}Py\rangle_U.
\eeq
This equation is meaningful for every $x,y\in {\cal D}(A)\cap H$ with $Px,Py\in {\cal R}(Q_\infty)$ and $Ax, Ay \in H$.
Since the last requirement appears too restrictive, we
rewrite \eqref{eq:AREperLemmaMaximality0} by taking the first two inner products in $X$, getting:
\beq
\label{eq:AREperLemmaMaximality}
0=-\langle Ax,Q_\infty^{-1}Py\rangle_X - \langle Q_\infty^{-1}Ox,Ay\rangle_X -\langle B^*Q_\infty^{-1}Px,B^*Q_\infty^{-1}Py\rangle_U.
\eeq
This makes sense in a larger set of vectors $x,y$, namely for every $x,y\in {\cal D}(A)\cap H$ with $Px,Py\in {\cal R}(Q_\infty)$.\footnote{Note that \eqref{eq:AREperLemmaMaximality0} is the same as \eqref{eq:algriccintroH} while \eqref{eq:AREperLemmaMaximality} is the same as \eqref{eq:algriccintro}.}
We can now provide the precise definition of solution of  \eqref{eq:AREperLemmaMaximality}.

\begin{Definition}\label{df:hplemmachiave}
Let $P\in\call_+(H)$ and define the operator $\Lambda_P$ as follows:
\beq\label{Lambda_P}\left\{\begin{array}{l} {\cal D}(\Lambda_P) = \{x\in H: \ Px\in {\cal R}(Q_\infty)\} \\[2mm]
\Lambda_P x = Q_\infty^{-1}Px \qquad \forall x\in {\cal D}(\Lambda_P).\end{array}\right.
\eeq
We say that $P$ is a solution of \eqref{eq:AREperLemmaMaximality}
(or, alternatively, a stationary solution of \eqref{eq:RiccatiNH})
if
${\cal D}(A)\cap {\cal D}(\Lambda_P)$ is dense in $[\ker Q_\infty]^\perp$ and
\beq\label{ARE_chiave}
0=-\langle Ax,\Lambda_P y\rangle_X - \langle \Lambda_P x , Ay\rangle_X - \langle B^*\Lambda_P x, B^*\Lambda_P y\rangle_U \qquad \forall x,y\in
{\cal D}(A)\cap{\cal D}(\Lambda_P).
\eeq
\end{Definition}

We now define a subclass $\calq$ of the class of all stationary solutions of \eqref{eq:RiccatiNH}. First of all we recall that, by Lemma \ref{exptAHH}-(i), $e^{tA}|_H$ is a strongly continuous semigroup in $H$. We then use the following notation.
\begin{Notation}\label{eq:A0}
We denote by $A_0:{\cal D}(A_0)\subseteq H \to H$ the generator
of $e^{tA}|_H$, and we write $e^{tA_0}$ in place of $e^{tA}|_H$.
\end{Notation}

\begin{Definition}\label{hp:hplemmachiave}
Let $P\in \Lc_+(H)$. We say that $P \in \calq$ if there exists $D\subseteq \cald(\Lambda_P)$ such that $D$ is dense in $\cald (A)\cap H$ with respect to the norm $\|\cdot\|_H+\|A\cdot\|_X$;
\end{Definition}

\begin{Lemma}\label{lm:hplemmachiaveI}
The set $\calr(Q_\infty)\cap \cald(A)$ is dense in $\cald(A)\cap H$, equipped with the norm $\|\cdot\|_H+\|A\cdot\|_X$. Hence, choosing $D=\calr(Q_\infty)\cap \cald(A)$, we have
$P=I_H\in \calq$.
\end{Lemma}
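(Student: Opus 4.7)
The inclusion $D=\calr(Q_\infty)\cap\cald(A)\subseteq\cald(\Lambda_{I_H})$ is immediate, since for $P=I_H$ the operator $\Lambda_{I_H}=Q_\infty^{-1}$ has domain $\{x\in H:x\in\calr(Q_\infty)\}=\calr(Q_\infty)$ (the equality uses $\calr(Q_\infty)\subseteq \calr(Q_\infty^{1/2})=H$). The content of the lemma is therefore the density of $\calr(Q_\infty)\cap\cald(A)$ in $\cald(A)\cap H$ with respect to the graph norm $\|\cdot\|_H+\|A\cdot\|_X$, and I would attack it by combining the Lyapunov identity with a two-step regularisation.

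First, the Lyapunov identity. The integral $Q_\infty=\int_0^\infty e^{sA}BB^*e^{sA^*}\,\ud s$ is norm-convergent in $\Lc(X)$ by Hypothesis \ref{hp:main}(ii); an integration by parts in $s$ together with the exponential decay of the boundary term at $\infty$ yields, for every $z\in\cald(A^*)$, $Q_\infty z\in\cald(A)$ with
\[ AQ_\infty z=-Q_\infty A^*z-BB^*z. \]
Hence $Q_\infty(\cald(A^*))\subseteq\calr(Q_\infty)\cap\cald(A)$, and it suffices to approximate by this smaller set.

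Second, outer regularisation. For $x\in\cald(A)\cap H$ and $n\in\N$ set $R_n:=n(nI-A)^{-1}$; this is well-defined because $0\in\rho(A)$ by Hypothesis \ref{hp:main}(ii). Since $e^{sA}$ preserves $H$ (Lemma \ref{exptAHH}-(i)) and $R_n$ is the Laplace transform of $e^{sA}$, one has $R_nx\in\cald(A^\infty)\cap H$; Hille--Yosida applied to the generator $A_0$ of $e^{sA}|_H$ yields $R_nx\to x$ in $H$, while the commutation $AR_n=R_nA$ on $\cald(A)$ together with the strong convergence $R_n\to I$ on $X$ gives $AR_nx\to Ax$ in $X$. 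So it is enough to approximate a \emph{smooth} element $y\in\cald(A^\infty)\cap H$ by elements of $Q_\infty(\cald(A^*))$ in the graph norm.

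Third, inner construction, and the main obstacle. Writing $x_k=Q_\infty z_k$ with $z_k\in\cald(A^*)$, the two convergences $x_k\to y$ in $H$ and $Ax_k\to Ay$ in $X$ translate, via the Lyapunov identity, into the two $X$-limits
\[ Q_\infty^{1/2}z_k\longrightarrow Q_\infty^{-1/2}y, \qquad -Q_\infty A^*z_k-BB^*z_k\longrightarrow Ay. \]
The first is achievable because $Q_\infty^{1/2}(\cald(A^*))$ is dense in $[\ker Q_\infty]^\perp$ (density of $\cald(A^*)$ in $X$ together with density of $\calr(Q_\infty^{1/2})$ in $[\ker Q_\infty]^\perp$), so one first selects $\tilde z_k\in\cald(A^*)$ realising the $H$-convergence. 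The delicate point, and the step I expect to be the main obstacle, is adding a correction $w_k\in\cald(A^*)$ such that $-Q_\infty A^*w_k-BB^*w_k$ absorbs the residual error $Ay-A(Q_\infty\tilde z_k)$ in $X$ while $Q_\infty^{1/2}w_k$ stays arbitrarily small: this is an inverse problem for the unbounded map $z\mapsto Q_\infty A^*z+BB^*z$, to be solved on a dense subspace using the boundedness of $(A^*)^{-1}$ (Hypothesis \ref{hp:main}(ii)) and the $\cald(A^\infty)$-smoothness of $y$ gained in the second stage. A diagonal extraction in the indices $n$ and $k$ then produces a sequence in $D=\calr(Q_\infty)\cap\cald(A)$ converging to $x$ in $\|\cdot\|_H+\|A\cdot\|_X$, whence $P=I_H\in\calq$.
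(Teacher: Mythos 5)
Your proposal is not a complete proof: the step you yourself flag as ``the main obstacle'' --- the inner construction in the third stage --- is never carried out, and it is precisely where all the difficulty lives. You need, for a smooth $y$, a sequence $z_k\in\cald(A^*)$ such that \emph{simultaneously} $Q_\infty^{1/2}z_k\to Q_\infty^{-1/2}y$ in $X$ and $AQ_\infty z_k=-Q_\infty A^*z_k-BB^*z_k\to Ay$ in $X$. Your plan is to first realize the $H$-convergence with some $\tilde z_k$ and then add a correction $w_k$ so that $AQ_\infty w_k$ absorbs the residual $Ay-AQ_\infty\tilde z_k$ while $Q_\infty^{1/2}w_k$ stays small. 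But this amounts to asserting that one can make $AQ_\infty w_k$ converge to a prescribed (generally nonzero) target while $Q_\infty^{1/2}w_k\to 0$; if the map $Q_\infty^{1/2}w\mapsto AQ_\infty w$ happens to be closable, this is impossible, and you give no argument ruling that out nor any construction of $w_k$. Invoking ``boundedness of $(A^*)^{-1}$'' and the smoothness of $y$ does not address this inverse problem. A secondary point: your outer regularisation uses that $R_n=n(nI-A)^{-1}$ maps $H$ into $H$ and converges strongly on $H$, which rests on Lemma \ref{exptAHH} and hence on Hypothesis \ref{NC}; the paper proves the present lemma without invoking \ref{NC}.

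For comparison, the paper's proof is a short Hilbert-space orthogonality argument in $\cald(A)\cap H$ equipped with the inner product $\< x,z\>_H+\< Ax,Az\>_X$: if $x$ is orthogonal to every $z=Q_\infty y$ with $y\in\cald(AQ_\infty)$, then $\< Ax,AQ_\infty y\>_X=-\< x,y\>_X$ for all such $y$, so $Ax\in\cald((AQ_\infty)^*)$ with $(AQ_\infty)^*Ax=-x$; combining $\<(AQ_\infty)^*Ax,Ax\>_X=-\< x,Ax\>_X\ge 0$ with the identity $2\<(AQ_\infty)^*v,v\>_X=-\|B^*v\|_U^2\le 0$ forces $Ax=0$, hence $x=0$ by invertibility of $A$. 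If you want to salvage your constructive approach, you would have to actually solve the simultaneous approximation problem above; otherwise the duality route is the one that closes the argument.
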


\begin{proof}
Let $x \in H\cap \cald(A)$ such that
$$
\< x,z\>_H+\< Ax,Az\>_X=0, \qquad \forall z \in \calr(Q_\infty)
\cap \cald(A).
$$
It is enough to prove that $x=0$. Observe that, writing $z=Q_\infty y$,
$$
\< x,Q_\infty y\>_H+\< Ax,AQ_\infty y\>_X=0,
\qquad \forall y \in \cald(AQ_\infty).
$$
Then
$$
\< Ax,AQ_\infty y\>_X=- \< x,Q_\infty y\>_H= -\< x,y\>_X
\qquad \forall y \in \cald(AQ_\infty).
$$
This means that $Ax \in \cald((AQ_\infty)^*)$ and $(AQ_\infty)^*Ax=-x$. Hence
$$
\<(AQ_\infty)^* Ax, Ax\>_X=- \< x,Ax\>_X= |(-A)^{1/2}x|_X^2 \ge 0.
$$
On the other hand we know, from \cite[Lemma 3.1-(ii)]{AcquistapaceGozzi17}, that, for every
$y\in\cald((AQ_\infty)^*)\subseteq \cald(AQ_\infty)$
$$
2\<(AQ_\infty)^*y,y\>_X  =-\|B^*y\|^2_U\,,
$$
so that
$$
2\<(AQ_\infty)^* Ax, Ax\>_X = -\|B^*Ax\|^2_U \le 0.
$$
This implies that $\|(-A)^{1/2}x\|_X^2 =0$; hence $Ax=0$ and, since $A$ is invertible, $x=0$.
\end{proof}
\begin{Lemma}\label{lem:massimalitaN}
Assume Hypothesis \ref{NC}. Let $P\in \Lc_+(H)$ be a solution of \eqref{eq:AREperLemmaMaximality} according to Definition \ref{df:hplemmachiave}. Assume also that $P \in \calq$ and that
$BB^*$ is coercive, which is equivalent to require that, for some $\mu>0$, $\|B^*x\|_U\ge \mu\|x\|_X$  for all $x \in X$.
Then, the following estimate holds:
$$\frac12\langle Px,x\rangle_H \le V^P(t-T_0,x) \qquad \forall x\in H,\ \
\forall t>T_0,$$
where $V^P$ is the value function defined in \eqref{eq:defVN-new} with $N=P$.
\end{Lemma}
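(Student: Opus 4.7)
The plan is to prove the inequality by a completing--the--square argument that exploits the assumption that $P$ is a stationary solution of the Riccati equation. If $\overline{\cal U}_{[-(t-T_0),0]}(x)=\emptyset$ then $V^P(t-T_0,x)=+\infty$ and there is nothing to prove; otherwise I fix a pair $(z,u)\in \overline{\cal U}_{[-(t-T_0),0]}(x)$ and let $y(\cdot):=y(\cdot;-(t-T_0),z,u)$ be the corresponding mild trajectory. Thanks to $z\in H$, Lemma \ref{exptAHH}-(i) (for the uncontrolled part), Hypothesis \ref{NC} and \eqref{eq:ptiragg-Q12} (for the controlled part) we have $y(s)\in H$ for every $s\in[-(t-T_0),0]$, so $\phi(s):=\tfrac12\<Py(s),y(s)\>_H$ is well defined and continuous by boundedness of $P$ on $H$.

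The heart of the proof is the formal chain of equalities
\begin{eqnarray*}
\phi'(s) &=& \<Py(s),Ay(s)+Bu(s)\>_H \\
&=& \<Py(s),Ay(s)\>_H+\<B^*\Lambda_P y(s),u(s)\>_U \\
&=& -\tfrac12\|B^*\Lambda_P y(s)\|_U^2+\<B^*\Lambda_P y(s),u(s)\>_U \\
&\le& \tfrac12\|u(s)\|_U^2,
\end{eqnarray*}
in which the third line is \eqref{ARE_chiave} specialised to $x=y=y(s)$ (giving $2\<Py(s),Ay(s)\>_H+\|B^*\Lambda_P y(s)\|_U^2=0$) and the final step is the elementary inequality $-\tfrac12\|a\|^2+\<a,u\>\le\tfrac12\|u\|^2$. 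Integration from $-(t-T_0)$ to $0$ then yields
$$\tfrac12\<Px,x\>_H\le \tfrac12\<Pz,z\>_H+\tfrac12\int_{-(t-T_0)}^0\|u(s)\|_U^2\,\ud s=J^P_{[-(t-T_0),0]}(z,u),$$
and the thesis follows by taking the infimum over $\overline{\cal U}_{[-(t-T_0),0]}(x)$.

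The only non--trivial point, and the one I expect to be the real obstacle, is the justification of the formal differentiation of $\phi$: since $y$ is only a mild solution, neither $Ay(s)$ nor $\Lambda_P y(s)=Q_\infty^{-1}Py(s)$ need make sense, and $\phi$ need not be classically differentiable. I would handle this by approximating $(z,u)$ by $(z_n,u_n)$ with $z_n$ in the graph--norm--dense set $D\subseteq\cald(\Lambda_P)\cap\cald(A)$ supplied by $P\in\calq$ (Definition \ref{hp:hplemmachiave}) and with $u_n\in C^1_c(\,]-(t-T_0),0[\,;U)$, producing strong trajectories $y_n$ for which the above computation is rigorous on the interior. In general $y_n(0)\ne x$; I would restore this endpoint condition by adding to $u_n$ a perturbation $v_n$ supported on a subinterval of length at most $T_0$ adjacent to $0$, using Hypothesis \ref{NC} together with the coercivity assumption on $BB^*$, which provides a bounded right inverse of $B$ and allows one to choose $v_n$ with $\|v_n\|_{L^2}\to 0$. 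This is precisely what the shift $t-T_0$ in the statement of the lemma is designed to accommodate. Passage to the limit $n\to\infty$ in the inequality $\tfrac12\<Py_n(0),y_n(0)\>_H\le \tfrac12\<Pz_n,z_n\>_H+\tfrac12\|u_n+v_n\|_{L^2}^2$ uses continuity of $P$ on $H$ and concludes the argument. The subtlety is that a plain mollification of $y$ would not suffice, because it generally destroys the membership $Py(s)\in\calr(Q_\infty)$ needed to give meaning to $\Lambda_P y(s)$; the class $\calq$ is introduced exactly so that a dense family of initial data produces trajectories along which the ARE computation can be carried out pointwise.
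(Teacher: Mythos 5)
Your formal completing-the-square computation is exactly the right identity, and you have correctly located the crux: justifying the pointwise differentiation of $s\mapsto\langle Py(s),y(s)\rangle_H$. But the fix you propose does not close the gap. Taking $z_n$ in the dense set $D\subseteq\cald(\Lambda_P)\cap\cald(A)$ and $u_n$ smooth gives a strong solution $y_n$ with $y_n(s)\in\cald(A)$ for all $s$, but it does \emph{not} give $y_n(s)\in\cald(\Lambda_P)$ for $s$ beyond the initial time: $D$ is merely dense in $\cald(A)\cap H$ for the graph norm (Definition \ref{hp:hplemmachiave}), it is not invariant under the flow, and nothing in the class $\calq$ guarantees that $Py_n(s)$ stays in $\calr(Q_\infty)$. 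So $\Lambda_P y_n(s)$, hence the third line of your chain, is still meaningless along the approximating trajectories, and your closing claim that ``$\calq$ is introduced exactly so that a dense family of initial data produces trajectories along which the ARE computation can be carried out pointwise'' misreads the definition. The paper's proof needs a \emph{second} approximation layer: for each $n$ it replaces the trajectory $y_n$ (as a curve, not as a solution) by curves $y_{nk}$ with values in $D$, converging in $C^1([-t,0];H)$ and with $Ay_{nk}\to Ay_n$ in $C([-t,0];X)$; these are no longer solutions of the state equation, and the resulting defect $w_{nk}-BB^*v_n\to 0$ produces an error term involving $\|\Lambda_P y_{nk}(s)\|_X$, which is \emph{a priori} uncontrolled. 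This is where the coercivity of $BB^*$ actually enters: $\|\Lambda_P y_{nk}(s)\|_X\le\mu^{-1}\|B^*\Lambda_P y_{nk}(s)\|_U$, and the right-hand side is absorbed into the negative square $-\|B^*\Lambda_P y_{nk}(s)-B^*v_n(s)\|_U^2$ before letting $k\to\infty$ and $\eps\to 0$. In your write-up coercivity is instead spent on an endpoint-correction device, which is not where it is needed.

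A secondary structural difference: the paper never corrects the endpoint. It proves the differential inequality for pairs $(z,u)$ admissible on the \emph{longer} interval $[-t,0]$, integrates only over $[T_0-t,0]$ to get \eqref{stima_aux}, and then, given a near-optimal pair for $V^P(t-T_0,x)$ on $[T_0-t,0]$, uses Hypothesis \ref{NC} to prepend a control on $[-t,T_0-t]$ steering $0$ to $\hat z_n$, producing an admissible pair on $[-t,0]$ to which \eqref{stima_aux} applies. Your alternative (perturbing $u_n$ near $s=0$ so that $y_n(0)=x$) could probably be made to work, but as stated it is only a sketch and, more importantly, it is downstream of the unresolved differentiation problem above.
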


\begin{proof} \rule{0mm}{1mm}\\
{\bf Step 1} We prove the estimate
\beq\label{stima_aux}
\langle Px,x\rangle_H\le
\langle Py(T_0-t),y(T_0-t)\rangle_H
+\int_{T_0-t}^0 \|u(s)\|_U^2\,\ud s, \qquad t>T_0,
\eeq
for every $(z,u)\in \overline{{\cal U}}_{[-t,0]}(x)$ with $x\in H$, where $y$ is the state corresponding to $(z,u)$, i.e.
\begin{equation}\label{eq:StatoDimLemmaChiave}
y(s) = e^{(s+t)A}z + \int_{-t}^s e^{(s-\sigma)A} \, Bu(\sigma)\,d\sigma, \quad s\in [-t,0].
\end{equation}
Such inequality would be easy to prove if we were able to compute
$\frac{d}{ds}\langle Py(s),y(s)\rangle_H$
and prove that
$$
\frac{d}{ds}\langle Py(s),y(s)\rangle_H\le \|u(s)\|^2_U, \qquad s \in [-t,0].
$$
Unfortunately we even do not know if such a derivative exists.
Hence we need to build a delicate approximation procedure as follows.

Fix $t>T_0$ and $x\in H$; consider any $(z,u)\in \overline{{\cal U}}_{[-t,0]}(x)$. It is not restrictive to assume in \eqref{eq:StatoDimLemmaChiave} that $u(\sigma)\in \overline{{\cal R}(B^*)}$ for every $\sigma \in [-t,0]$: indeed, writing, for every such $\sigma$,
$$u(\sigma)=u_1(\sigma)+u_2(\sigma), \quad u_1(\sigma) \in  \overline{{\cal R}(B^*)},\quad u_2(\sigma) \in  \overline{{\cal R}(B^*)}^\perp=\ker B,$$
it is clear that $e^{(s-\sigma)A}Bu_2(\sigma)=0$. Hence
$$y(s) = e^{(s+t)A}z + \int_{-t}^s e^{(s-\sigma)A} \, Bu_1(\sigma)\,d\sigma, \quad s\in [-t,0].
$$
Since, evidently, $J^P_{[-t,0]}(z,u)\ge J^P_{[-t,0]}(z,u_1)$, we can always choose $u_1$ in place of $u$.
Next, select a sequence $\{(z_n,u_n)\}\subseteq \big[{\cal D}(A_0)\big] \times C^1_0([-t,0];U)$\footnote{$C^1_0([-t,0];U)$ is the set of $C^1$ $U$-valued functions which take the value $0$ at the boundary.}, such that $u_n$ is ${\cal R}(B^*)$-valued and $(z_n,u_n)\to (z,u)$ in $H\times L^2(-t,0;U)$.
Thus we can set $u_n=B^*v_n$, where $v_n\in C^1_0([-t,0],X)$ and,
denoting by $y_n$ the corresponding state, we have $y_n\in C^1([-t,0];H) \cap C([-t,0];\cald(A))$ (see e.g. \cite[Chapter 4, Corollary 2.5]{Pazy83}) and
$$
y_n(s) = e^{(s+t)A}z_n + \int_{-t}^s e^{(s-\sigma)A} \, BB^*v_n(\sigma)\,d\sigma , \qquad s\in [-t,0].
$$

%
%
Thanks to the properties of the set $D$ of Hypothesis \ref{hp:hplemmachiave}, we can now choose, for every $n\in \N$, another approximating sequence
$\{y_{nk}\}_{h \in N} \subset C^1([-t,0],H)\cap C^0_0([-t,0],\cald(A))$, such that $y_{nk}(s) \in D$ for every $s \in [-t,0]$ and satisfying, as $k \to +\infty$,
\begin{equation}\label{eq:convnkh1}
y_{nk} \to y_{n} \hbox{ in } C^1([-t,0];H), \qquad
Ay_{nk} \to Ay_{n} \hbox{ in } C([-t,0];X)
\end{equation}
(see e.g. \cite[Chapter 4, Theorem 2.7]{Pazy83}).
Set now $w_{nk}=y'_{nk}-Ay_{nk}$. By \eqref{eq:convnkh1} we get, for every $n\in \N$,
\begin{equation}\label{eq:convnkh1bis}
w_{nk} \to y_{n}-Ay_{n} = BB^*v_n \hbox{ in } C^0([-t,0];X) \qquad
\hbox{as $k \to +\infty$.}
\end{equation}
%
We now can differentiate the quantity
$\langle Py_{nk}(s), y_{nk}(s)\rangle_H$ for $s\in [-t,0]$. Indeed, taking into account the above definition of $w_{nk}$, we obtain, for $s\in [-t,0]$ and $n,k\in\N$:
\bey
\lefteqn{\frac{d}{ds} \langle Py_{nk}(s), y_{nk}(s)\rangle_H = \langle y_{nk}'(s),Py_{nk}(s)\rangle_H + \langle Py_{nk}(s),y_{nk}'(s)\rangle_H =}\\
& & =\langle y_{nk}'(s), \Lambda_P y_{nk}(s)\rangle_X + \langle \Lambda_P y_{nk}(s),y_{nk}'(s)\rangle_X = \\[1mm]
& & = \langle Ay_{nk}(s)+w_{nk}(s),\Lambda_P y_{nk}(s)\rangle_X + \langle \Lambda_P y_{nk}(s),Ay_{nk}(s)+w_{nk}(s)\rangle_X .
\eey
Since $P$ solves the ARE \eqref{ARE_chiave} we get, for every $s\in [-t,0]$,
\bey
\lefteqn{\frac{d}{ds} \langle Py_{nk}(s), y_{nk}(s)\rangle_H =}\\[1mm]
& & = -\|B^*\Lambda_P y_{nk}(s)\|_U^2 + \langle w_{nk}(s),\Lambda_P y_{nk}(s)\rangle _X + \langle \Lambda_P y_{nk}(s),w_{nk}(s)\rangle _X =\\[2mm]
& & = -\|B^*\Lambda_P y_{nk}(s)\|_U^2 + \langle B^*v_n(s),B^*\Lambda_P y_{nk}(s)\rangle _U + \langle B^*\Lambda_P y_{nk}(s),B^*v_n(s)\rangle _U =
\\[2mm]
& & \quad + \langle w_{nk}(s)-BB^*v_n(s),\Lambda_P y_{nk}(s)\rangle _X +\langle \Lambda_P y_{nk}(s),w_{nk}(s)-BB^*v_n(s)\rangle _X =
\\[2mm]
& & = -\|B^*\Lambda_P y_{nk}(s)-B^*v_n(s)\|_U^2 + \|B^*v_n(s)\|_U^2 +
\\[2mm]
& & \quad + \langle w_{nk}(s)-BB^*v_n(s),\Lambda_P y_{nk}(s)\rangle _X + \langle \Lambda_P y_{nk}(s),w_{nk}(s)-BB^*v_n(s)\rangle _X .
\eey
Hence, recalling that $u_n=B^*v_n$, we may write for every $\varepsilon>0$,
\begin{equation}\label{eq:stimanew}
\begin{array}{lcl} \displaystyle \frac{d}{ds} \langle Py_{nk}(s), y_{nk}(s)\rangle_H & \le & -\|B^*\Lambda_P y_{nk}(s)-B^*v_n(s)\|_U^2 +\|u_n(s)\|_U^2 +\\[3mm]
& & + 2\|w_{nk}(s)-Bu_n(s)\|_X \|\Lambda_QPy_{nk}(s)\|_X \le\\[2mm]
& \le & -\|B^*\Lambda_P y_{nk}(s)-B^*v_n(s)\|_U^2 +\|u_n(s)\|_U^2 +\\[2mm]
& &  \displaystyle + \frac1{\varepsilon} \|w_{nk}(s)-Bu_n(s)\|_X^2
+\varepsilon \|\Lambda_P y_{nk}(s)\|_X^2.
\end{array}
\end{equation}
Now observe that
$$
\varepsilon
\|\Lambda_P y_{nk}(s)\|_X^2\le
\frac \eps\mu
\|B^*\Lambda_P y_{nk}(s)\|_U^2\le
2\frac \eps\mu
\|B^*\Lambda_P y_{nk}(s)-B^*v_n(s)\|_U^2
+ 2\frac \eps\mu
\|B^*v_n(s)\|_U^2.
$$
Inserting this inequality into \eqref{eq:stimanew} we get
\begin{equation}\label{eq:stimanewbis}
\begin{array}{lcl} \displaystyle \frac{d}{ds} \langle Py_{nk}(s), y_{nk}(s)\rangle_H & \le & \displaystyle -\left(1-2\frac\eps\mu \right) \|B^*\Lambda_P y_{nk}(s)-B^*v_n(s)\|_U^2+ \\[4mm]
& & \displaystyle +\left(1+2\frac\eps\mu \right)\|u_n(s)\|_U^2 + \frac1{\varepsilon} \|w_{nk}(s)-Bu_n(s)\|_X^2.
\end{array}
\end{equation}
Hence, for all positive $\eps$ such that $2\frac\eps\mu\le \frac12$
we get
\begin{equation}
\label{eq:stimanewter}
\frac{d}{ds} \langle Py_{nk}(s), y_{nk}(s)\rangle_H \le
\left(1+2\frac\eps\mu \right)\|u_n(s)\|_U^2
+ \frac1{\varepsilon} \|w_{nk}(s)-Bu_n(s)\|_X^2\,.
\end{equation}
Now we have for every $s\in [-t,0]$, as $k\to \infty$,
$$\|y_{nk}(s)-y_n(s)\|_H\to 0, \quad \|y_{nk}'(s)-y_n'(s)\|_H\to 0, \quad \|w_{nk}(s) -Bu_n(s)\|_X \to 0;$$
thus we get, for every $n\in \N^+$, $s\in [-t,0]$ and $0<\varepsilon \le \mu/4$,
$$
\frac{d}{ds} \langle Py_n(s), y_n(s)\rangle_H
\le \left(1+2\frac\eps\mu \right)\|u_n(s)\|_U^2\,.
$$
Finally, letting $\varepsilon\to 0$,
$$\frac{d}{ds} \langle Py_n(s), y_n(s)\rangle_H \le \|u_n(s)\|_U^2 \quad \forall n\in \N^+, \quad \forall s\in [-t,0].$$
We now integrate in the smaller interval $[T_0-t,0]$:
$$\langle P y_n(0),y_n(0)\rangle_H \le \langle Py_n(T_0-t),y_n(T_0-t)\rangle_H + \int_{T_0-t}^0 \|u_n(s)\|_U^2\,\ud s.$$
Letting $n\to \infty$, since $y_n(s) \to y(s)$ for every $s\in [-t,0]$, $y(0)=x$, and $u_n\to u$ in $L^2(-t,0;U)$, we deduce for every $(z,u)\in \overline{{\cal U}}_{[-t,0]}(x)$
$$\langle Px,x\rangle_H \le \langle Py(T_0-t),y(T_0-t)\rangle_H + \int_{T_0-t}^0 \|u(s)\|_U^2\,\ud s, \qquad t>T_0;$$
this is equation \eqref{stima_aux}.\\[2mm]
{\bf Step 2} We complete the proof of the Lemma. Consider a sequence $(\hat z_n,\hat u_n)\in \overline{{\cal U}}_{[T_0-t,0]}(x)$, such that, as $n\to \infty$,
\beq\label{optimalQ}
J^P_{[T_0-t,0]}(\hat z_n,\hat u_n) \to \inf_{(z,u)\in \overline{{\cal U}}_{[T_0-t,0]}(x)} J^P_{[T_0-t,0]}(z,u) = V^P(t-T_0,x).
\eeq
Thus $\hat z_n\in H$, $\hat u_n\in L^2(T_0-t,0;U)$ and the corresponding state is
$$\hat y_n(s)= e^{(s+t-T_0)A}\hat z_n + \int_{T_0-t}^s e^{(s-\sigma)A} B\hat u_n(\sigma)\,\ud \sigma, \quad s\in [T_0-t,0];$$
in particular $\hat y_n(0)=x$. Now choose $\hat v_n \in L^2(-t,T_0-t;U)$ such that
\beq\label{incollo}
\int_{-t}^{T_0-t} e^{(T_0-t-\sigma)A}B\hat v_n(\sigma)\,\ud\sigma =\hat z_n;
\eeq
this is possible since, due to Hypothesis \ref{NC}, the range of the operator
(defined in \eqref{eq:newLst})
$$v\mapsto {\cal L}_{-t,T_0-t}(v) = {\cal L}_{-T_0,0}(v(\cdot +t-T_0))$$
is all of $H$ (see \cite[Theorem 2.3]{Zabczyk92}). Then, setting
$$\overline{u}_n = \left\{ \begin{array}{ll} \hat v_n & \textrm{in } [-t,T_0-t] \\[2mm]
\hat u_n & \textrm{in } [T_0-t,0], \end{array}\right.$$
the state corresponding to $(0,\overline{u}_n)$ in $[-t,0]$ is
$$\overline{y}_n(s) = \int_{-t}^s e^{(s-\sigma)A} B\overline{u}_n(\sigma)\,\ud\sigma.$$
By \eqref{incollo} we have
$$\overline{y}_n(T_0-t)=\int_{-t}^{T_0-t} e^{(T_0-t-\sigma)A} B\overline{u}_n(\sigma)\,\ud\sigma = \hat z_n;$$
hence, by uniqueness,
$$\overline{y}_n(s) = e^{(s+t-T_0)A}\hat z_n + \int_{T_0-t}^s e^{(s-\sigma)A} B\hat u_n(\sigma)\,\ud\sigma = \hat y_n(s) \qquad \forall s\in [T_0-t,0],$$
so that $\overline{y}_n(0)=\hat y_n(0)=x$. This shows that $(0,\overline{u}_n)\in \overline{\calu}_{[-t,0]}(x)$, and consequently, by \eqref{stima_aux},
$$\langle Px,x\rangle_H \le \langle P\hat z_n,\hat z_n\rangle_H + \int_{T_0-t}^0 \|\hat u_n(s)\|^2_U\,\ud s=2J^P_{[T_0-t,0]}(\hat z_n,\hat u_n).$$
Finally, by \eqref{optimalQ}, as $n\to \infty$ we get
$$\frac12\langle Px,x\rangle_H \le V^P(t-T_0,x) \qquad \forall t> T_0, \quad \forall x\in H.$$
\end{proof}

\section{Minimum energy with (negative) infinite horizon}
\label{sub:infhor}

We now give a precise formulation of our infinite horizon problem
(see Subsection \ref{SSE:introinfhor}
and also \cite[Remark 2.8]{AcquistapaceGozzi17}).
We assume that Hypothesis \ref{hp:main} holds throughout this section without repeating it.
For any given control $u\in L^2(-\infty,s;U)$ we take the state equation
\begin{equation}
\label{eq:state-inf-newbis} \left\{
\begin{array}{l}
y'(r)=Ay(r)+Bu(r), \quad r\in \,]-\infty,s], \\
y(-\infty) = 0.
\end{array}
\right.
\end{equation}
By Lemma \ref{lm:existencesol-new} we know that the unique solution of (\ref{eq:state-inf-newbis}) belongs to $C(\,]-\infty,s];X)$, is given by
$$
y(r):=y(r;-\infty,0,u)=\int_{-\infty }^{r}e^{(r-\tau)A}B u(\tau)\, \ud \tau,
\quad -\infty < r \le s,
$$
and satisfies, for every $-\infty < r_1 \leq r_2 \leq s$,
$$y(r_2) =  e^{(r_2 - r_1)A} y(r_1)+\int_{r_1}^{r_2} e^{(r_2-\tau)A} Bu(\tau)\, \ud \tau, \quad \textrm{and} \quad \lim_{r\to -\infty}  y(r)= 0 \quad \hbox{ in $X$}.$$
As for the finite horizon case, we define:
\begin{equation}\label{eq:defUcal}
{\cal U}_{[-\infty,s]}(0,x) \nd \left\{ u\in L^2(-\infty,s;U) \; :
\; y(s;-\infty,0,u)=x \right \},
\end{equation}
\[
J_{[-\infty,s]}(u) =\frac12 \int_{-\infty}^s \| u(r)\|^2_{U}\;\ud r,
\]
\[
V_1(-\infty,s;0,x)\nd\inf_{u\in {\cal U}_{[-\infty,s]}(0,x)}
J_{[-\infty,s]}(u),
\]
with the agreement that the infimum over the empty set is $+\infty$.
From (\ref{eq:defUcal}) it is easy to see that
\begin{equation}
\label{eq:spacontr-new-infhor} u(\cdot)\in {\cal
U}_{[-\infty,s]}(0,x) \ \iff \ u(\cdot-s)\in  {\cal
U}_{[-\infty,0]}(0,x);
\end{equation}
this implies that
$$
V_1(-\infty,s;0,x)=V_1(-\infty,0;0,x).$$
From now on we set, as in \eqref{eq:defV0-new}
\begin{equation}\label{eq:defV0-new-infhor}
V_\infty(x) = V_1(-\infty,0;0,x) = \inf_{u\in {\cal U}_{[-\infty,0]}(0,x)} J_{[-\infty,0]}(u), \quad x\in X.
\end{equation}
We collect now some results about the above problem and the function $V_\infty$.

\subsection{Optimal strategies}
\label{SSE:OPTSTRAT}
We start proving the existence of optimal strategies.

\begin{Proposition}\label{pr:exist}
The set ${\cal U}_{[-\infty,0]}(0,x)$ is nonempty if and only if $x \in H$.
Moreover, for every $x \in H$ there exists a unique
$\hat{u}_x\in {\cal U}_{[-\infty,0]}(0,x)$ such that
$$V_\infty (x)= J_{[-\infty,0]}(\hat u_x).$$
\end{Proposition}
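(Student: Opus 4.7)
My plan is to split the statement into the two claims and handle each in turn, both being standard applications of machinery already implicit elsewhere in the paper.

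First I would prove the equivalence ``${\cal U}_{[-\infty,0]}(0,x)\neq\emptyset \iff x\in H$''. The set is nonempty precisely when $x$ lies in the range of the bounded linear operator
$$\call_{-\infty,0}: L^2(-\infty,0;U)\to X, \qquad u\mapsto \int_{-\infty}^0 e^{-\tau A}Bu(\tau)\,\ud\tau,$$
which is well-defined thanks to \eqref{eq:AtipoMomega}. Arguing exactly as in \cite[Part IV, Theorem 2.3]{Zabczyk92} (whose proof adapts verbatim to the infinite-horizon case, as already observed in the proof of Proposition \ref{pr:Ubarnonempty} and in \eqref{eq:ptiragg-Q12}), one obtains $\calr(\call_{-\infty,0})=\calr(Q_\infty^{1/2})=H$, which is the desired identity.

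Next, for existence of an optimal $\hat u_x$ when $x\in H$, I would run the direct method. The constraint set ${\cal U}_{[-\infty,0]}(0,x)$ is the preimage of $\{x\}$ under the continuous linear map $\call_{-\infty,0}$, hence a closed, affine, weakly closed subspace of $L^2(-\infty,0;U)$. The functional $J_{[-\infty,0]}$ is one-half the squared Hilbert norm on $L^2(-\infty,0;U)$, hence continuous, strictly convex, coercive, and weakly lower semicontinuous. I would pick a minimizing sequence $\{u_n\}\subseteq {\cal U}_{[-\infty,0]}(0,x)$, use the bound on $J_{[-\infty,0]}(u_n)$ to extract a weakly convergent subsequence $u_{n_k}\rightharpoonup \hat u_x$, invoke weak continuity of $\call_{-\infty,0}$ to deduce $\hat u_x\in {\cal U}_{[-\infty,0]}(0,x)$, and invoke weak lower semicontinuity to obtain $J_{[-\infty,0]}(\hat u_x)\le V_\infty(x)$. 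Hence $\hat u_x$ is optimal.

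Uniqueness would then follow from strict convexity via the parallelogram identity: if $u_1,u_2\in{\cal U}_{[-\infty,0]}(0,x)$ were both optimal, then $(u_1+u_2)/2$ would still be admissible by convexity of the constraint, and one would have
$$J_{[-\infty,0]}\!\left(\frac{u_1+u_2}{2}\right)+\frac14\int_{-\infty}^0 \|u_1(s)-u_2(s)\|_U^2\,\ud s=\frac{J_{[-\infty,0]}(u_1)+J_{[-\infty,0]}(u_2)}{2}=V_\infty(x),$$
forcing $u_1=u_2$ a.e. I do not anticipate any serious obstacle: the whole argument is a standard Hilbert-space direct-method computation, fully parallel to the finite-horizon case. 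The only point that deserves some care is the range identification $\calr(\call_{-\infty,0})=\calr(Q_\infty^{1/2})$, which is a Douglas factorization argument applied to the absolutely convergent integrals generated by the exponential decay \eqref{eq:AtipoMomega}.
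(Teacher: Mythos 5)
Your proposal is correct and follows essentially the same route as the paper: the nonemptiness claim via the identity ${\cal R}(\call_{-\infty,0})=\calr(Q_\infty^{1/2})$ (the $t=+\infty$ case of \eqref{eq:ptiragg-Q12}), existence by the direct method with weak compactness and weak lower semicontinuity of $J_{[-\infty,0]}$, and uniqueness by strict convexity. Your version is slightly more explicit than the paper's in noting that the weak limit $\hat u_x$ remains admissible because the constraint set is convex and closed (hence weakly closed), a step the paper leaves implicit.
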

\begin{proof}
The first statement follows from \eqref{eq:ptiragg-Q12} as in Proposition \ref{pr:Ubarnonempty}.
Now take $x \in H$ and observe that any minimizing sequence $\{u_n\}_{n\in {\mathbb N}}$ must be bounded
in $L^2(-\infty,0;U)$; so, passing to a subsequence, we have $u_n
\rightharpoonup \hat{u}_x$ in $L^2(-\infty,0;U)$. As the functional
$J_{[-\infty,0]}$ is weakly lower semicontinuous, we get
$$V_\infty(x) \le J_{[-\infty,0]}(\hat{u}_x) \le \liminf_{n\to \infty} J_{[-\infty,0]}(u_n) =V_\infty(x),$$
i.e. $\hat{u}_x$ is optimal. Uniqueness is an easy consequence of the strict convexity of the functional $J_{[-\infty,0]}$.
\end{proof}
Moreover we have the following result about the optimal couples when $x \in {\cal R}(Q_\infty)$ (see \cite[Proposition C.3 and Remark C.4]{AcquistapaceGozzi17}).

\begin{Proposition}\label{pr:optcoupleinfty}
Let $x \in {\cal R}(Q_\infty)$. Let $(\hat y_{x},\hat u_{x})$ be the optimal couple for our problem with target $x$. Then we have
\begin{equation}\label{eq:optcontrt}
\hat u_{x}(r) = B^* e^{-rA^*} { Q}^{-1}_{\infty} x, \quad r\in\,]-\infty,0].
\end{equation}
Moreover the corresponding optimal state $\hat y_x$ satisfies
\begin{equation}\label{eq:optstatet}
\hat y_{x}(r)=Q_{\infty}e^{-rA^*}Q_\infty^{-1}x, \quad r \in\,]-\infty,0];
\end{equation}
hence the optimal couple satisfies the feedback formula
\begin{equation}\label{eq:optfeedbackt}
\hat u_{x}(r)=B^* Q_{\infty}^{-1}\hat y_{x}(r), \quad r \in \,]-\infty,0],
\end{equation}
and, formally, $\hat y_{x}$ is a solution of the backward closed loop equation (BCLE)
\begin{equation}\label{eq:CLEt}
y'(r)=(A+BB^* Q_{\infty}^{-1}) y(r), \quad r \in\,]-\infty,0[\,, \quad y(0)=x,
\end{equation}
which, since $Q_\infty$ solves the Lyapunov equation
(see \cite[Proposition 3.3]{AcquistapaceGozzi17}
rewrites as
\begin{equation}\label{eq:CLEtinftybis}
y'(r)=-Q_{\infty}A^*Q_{\infty}^{-1} y(r), \quad r \in\,]-\infty,0[\,.
\end{equation}
If $A^*$ commutes with $Q_\infty$ (e.g. when $A$ is selfadjoint and invertible,
and $A$ and $BB^*$ commute), then \eqref{eq:CLEtinftybis} becomes
\begin{equation}\label{eq:CLEinftyA*}
y'(r)=-A^* y(r) , \quad r \in\,]-\infty,0[\,.
\end{equation}
This means that, in such case, the optimal trajectory arriving at $x$ is given by
$$y(r)=e^{-rA^*}x, \quad r \in\,]-\infty,0].$$
\end{Proposition}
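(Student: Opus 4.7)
Since Proposition \ref{pr:exist} already supplies, for every $x\in H$, a unique optimal control $\hat u_x\in L^2(-\infty,0;U)$, and since $\calr(Q_\infty)\subseteq \calr(Q_\infty^{1/2})=H$, the plan is simply to exhibit the candidate $\hat u_x(r)=B^*e^{-rA^*}Q_\infty^{-1}x$, check that it is admissible, compute its cost and match it with the known value
\[
V_\infty(x)=\tfrac12\langle x,x\rangle_H=\tfrac12\langle Q_\infty^{-1}x,x\rangle_X.
\]
The uniqueness half of Proposition \ref{pr:exist} then forces this candidate to be $\hat u_x$, giving \eqref{eq:optcontrt}. Once \eqref{eq:optcontrt} is known, the state formula \eqref{eq:optstatet}, the feedback \eqref{eq:optfeedbackt} and the two forms of the backward closed-loop equation are obtained by direct computation using \eqref{eq:mild-inf-new} together with the defining identity \eqref{eq:contropintro} for $Q_\infty$ and the Lyapunov relation $AQ_\infty+Q_\infty A^*+BB^*=0$ from \cite[Proposition 3.3]{AcquistapaceGozzi17}.

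\textbf{Verifying the candidate.} Inserting $\hat u_x$ into \eqref{eq:mild-inf-new} and changing variable $s=-\tau$,
\[
y(0;-\infty,0,\hat u_x)=\int_{-\infty}^{0} e^{-\tau A}BB^* e^{-\tau A^*}Q_\infty^{-1}x\,\ud\tau=\int_0^{+\infty} e^{sA}BB^* e^{sA^*}Q_\infty^{-1}x\,\ud s=Q_\infty Q_\infty^{-1}x=x,
\]
using \eqref{eq:contropintro} and the fact that $Q_\infty^{-1}$ is a right-inverse of $Q_\infty$ on $\calr(Q_\infty)$. The same change of variable together with the identity $\int_0^\infty\|B^*e^{sA^*}y\|_U^2\,\ud s=\langle Q_\infty y,y\rangle_X$ yields
\[
2J_{[-\infty,0]}(\hat u_x)=\langle Q_\infty Q_\infty^{-1}x,Q_\infty^{-1}x\rangle_X=\langle x,Q_\infty^{-1}x\rangle_X=2V_\infty(x),
\]
which both certifies $\hat u_x\in L^2(-\infty,0;U)$ and shows it is optimal. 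Hence \eqref{eq:optcontrt} holds by the uniqueness stated in Proposition \ref{pr:exist}.

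\textbf{State, feedback and closed-loop equations.} Substituting \eqref{eq:optcontrt} into \eqref{eq:mild-inf-new} and setting $\sigma=r-\tau$ yields, for $r\le 0$,
\[
\hat y_x(r)=\int_0^{+\infty}e^{\sigma A}BB^*e^{\sigma A^*}\,\ud\sigma\cdot e^{-rA^*}Q_\infty^{-1}x=Q_\infty e^{-rA^*}Q_\infty^{-1}x,
\]
which is \eqref{eq:optstatet}. The feedback identity \eqref{eq:optfeedbackt} is then immediate since $B^*Q_\infty^{-1}\hat y_x(r)=B^*e^{-rA^*}Q_\infty^{-1}x=\hat u_x(r)$. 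Plugging this back into the state equation gives the BCLE \eqref{eq:CLEt}; the Lyapunov equation recasts the closed-loop generator as $A+BB^*Q_\infty^{-1}=-Q_\infty A^*Q_\infty^{-1}$, yielding \eqref{eq:CLEtinftybis}. If $A^*$ commutes with $Q_\infty$ this collapses to $-A^*$ and, using the terminal condition $\hat y_x(0)=x$, gives $\hat y_x(r)=e^{-rA^*}x$.

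\textbf{Main obstacle.} The delicate point is the legitimacy of manipulating $Q_\infty^{-1}$ as a genuine inverse: it is only a pseudoinverse, defined on $\calr(Q_\infty)$ and taking values modulo $\ker Q_\infty$, so the interchange of $Q_\infty^{-1}$ with the integrals and with $e^{-rA^*}$ must be interpreted on the dense subspace $\calr(Q_\infty)\cap \cald(A)$ (compare Lemma \ref{lm:hplemmachiaveI}) and the Lyapunov identity must be applied there as well. These technicalities are carried out in \cite[Appendix C]{AcquistapaceGozzi17}, to which the statement already refers, so here it suffices to record the computations above.
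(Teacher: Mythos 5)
Your argument is correct, and it supplies in full the standard verification that the paper itself does not reproduce (it simply cites \cite[Proposition C.3 and Remark C.4]{AcquistapaceGozzi17}): exhibit the candidate $\hat u_x=B^*e^{-\cdot A^*}Q_\infty^{-1}x$, check admissibility and cost via the definition of $Q_\infty$, and invoke the uniqueness in Proposition \ref{pr:exist}; the formulas \eqref{eq:optstatet}--\eqref{eq:CLEinftyA*} then follow by the computations you give, with the pseudoinverse caveats you correctly flag (in particular $\ker Q_\infty\subseteq\ker B^*$ is what makes \eqref{eq:optfeedbackt} unambiguous). One small structural remark: to certify optimality you match the cost against $V_\infty(x)=\frac12\|x\|_H^2$, which in this paper is only established later (Proposition \ref{pr:VlimitVt}) and under Hypothesis \ref{NC}, whereas Proposition \ref{pr:optcoupleinfty} carries no such hypothesis. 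This is easily repaired without any forward reference: for any $u\in\calu_{[-\infty,0]}(0,x)$ one has $\langle u,\hat u_x\rangle_{L^2(-\infty,0;U)}=\langle \int_{-\infty}^0 e^{-\tau A}Bu(\tau)\,\ud\tau,\,Q_\infty^{-1}x\rangle_X=\langle x,Q_\infty^{-1}x\rangle_X=\|\hat u_x\|_{L^2}^2$, whence $\|u\|^2_{L^2}=\|u-\hat u_x\|^2_{L^2}+\|\hat u_x\|^2_{L^2}$, which gives optimality (and uniqueness) of $\hat u_x$ directly.
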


\subsection{Connection with the finite horizon case}
\label{SSE:FININF}

We now prove the connection between $V_\infty$ and the value function $V$ of the
corresponding finite horizon problem which is studied in \cite{AcquistapaceGozzi17}
(see also Appendix A).
\begin{Proposition}\label{pr:VlimitVt}
Under Hypothesis \ref{NC}, for every $x \in H$ we have
\[
V_{\infty}(x) =\lim_{t\to +\infty}  V(t,x)=\inf_{t>0} V(t,x).
\]
Moreover $V_{\infty}(x)=\frac12 \|x\|_H^2$.
\end{Proposition}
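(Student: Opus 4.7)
The plan is to prove both equalities at once: the monotonicity/extension direction gives one inequality immediately, and a catch-up construction based on Hypothesis \ref{NC} gives the other; the identification $V_\infty(x)=\frac12\|x\|_H^2$ is then obtained from the explicit optimal-control formula of Proposition \ref{pr:optcoupleinfty} combined with a density argument.

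First I would note that any $u\in\calu_{[-t,0]}(0,x)$, extended by $0$ on $(-\infty,-t]$, lies in $\calu_{[-\infty,0]}(0,x)$ with the same cost; similarly extending by zero on $[-t_2,-t_1]$ shows $V(\cdot,x)$ is non-increasing in $t$. Hence $V_\infty(x)\le\inf_{t>0}V(t,x)=\lim_{t\to+\infty}V(t,x)$. For the reverse inequality, pick the optimal pair $(\hat y_x,\hat u_x)$ for $V_\infty(x)$ from Proposition \ref{pr:exist}. For $t>T_0$, set $\bar x_t:=\hat y_x(-t+T_0)\in H$. By \eqref{eq:kerNC}, $H=\calr(Q_{T_0}^{1/2})$, so by \eqref{eq:ptiragg-Q12} there exists a minimum-norm $v_t\in L^2(-t,-t+T_0;U)$ driving $0$ at time $-t$ to $\bar x_t$ at time $-t+T_0$. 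Define $u_t:=v_t$ on $[-t,-t+T_0]$ and $u_t:=\hat u_x|_{[-t+T_0,0]}$ on $[-t+T_0,0]$. By Definition \ref{def:delsolstate-new} applied to both $\hat y_x$ and $y(\cdot;-t,0,u_t)$ — both satisfy the variation-of-constants identity \eqref{eq:defsolution-new} on $[-t+T_0,0]$ with identical data $\bar x_t$ at time $-t+T_0$ and the same forcing $\hat u_x$ — the two trajectories coincide on $[-t+T_0,0]$, so $y(0;-t,0,u_t)=\hat y_x(0)=x$ and $u_t\in\calu_{[-t,0]}(0,x)$.

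The upper bound $V(t,x)\le J_{[-t,0]}(u_t)=\tfrac12\|v_t\|^2_{L^2(-t,-t+T_0;U)}+\tfrac12\int_{-t+T_0}^0\|\hat u_x(s)\|^2_U\,\ud s$ must then be driven to $V_\infty(x)$. The integral term converges to $\int_{-\infty}^0\|\hat u_x\|^2_U\,\ud s=2V_\infty(x)$ by dominated convergence. For the first term, by construction $\|v_t\|^2_{L^2}=2V(T_0,\bar x_t)$, which is a bounded quadratic form on $H$ (via the operator $P(T_0)$ of \cite[Proposition 4.8]{AcquistapaceGozzi17}); so it suffices to show $\|\bar x_t\|_H\to 0$. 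For $x\in\calr(Q_\infty)$ this is immediate from Proposition \ref{pr:optcoupleinfty}: $\hat y_x(-s)=Q_\infty e^{sA^*}Q_\infty^{-1}x$, hence $\|\hat y_x(-s)\|_H=\|Q_\infty^{1/2}e^{sA^*}Q_\infty^{-1}x\|_X\le\|Q_\infty^{1/2}\|\,M e^{-\omega s}\|Q_\infty^{-1}x\|_X\to 0$. For general $x\in H$, approximate by $x_n\in\calr(Q_\infty)$ in the $H$-norm: the boundedness of $P(T_0)$ on $H$ and the continuity of $V_\infty$ on $H$ (both quadratic forms) close the argument.

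Finally, I identify $V_\infty(x)=\tfrac12\|x\|_H^2$. For $x\in\calr(Q_\infty)$, Proposition \ref{pr:optcoupleinfty} gives $\hat u_x(s)=B^*e^{-sA^*}Q_\infty^{-1}x$, and the substitution $r=-s$ yields
\begin{equation*}
2V_\infty(x)=\int_0^{+\infty}\|B^*e^{rA^*}Q_\infty^{-1}x\|_U^2\,\ud r=\langle Q_\infty Q_\infty^{-1}x,Q_\infty^{-1}x\rangle_X=\langle x,Q_\infty^{-1}x\rangle_X=\|x\|_H^2,
\end{equation*}
using the definition \eqref{eq:contropintro} of $Q_\infty$ and \eqref{eq:innerproductH}. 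Since $\calr(Q_\infty)$ is dense in $H$ (as $Q_\infty^{1/2}$ maps $[\ker Q_\infty]^\perp$ isometrically onto $H$ and has dense range) and both sides are continuous quadratic forms on $H$, the identity extends to all $x\in H$. The main obstacle I expect is the $H$-norm decay $\|\hat y_x(r)\|_H\to 0$ as $r\to-\infty$ for general $x\in H$: the $X$-norm decay is not enough, and one genuinely needs either the explicit formula on the dense subset $\calr(Q_\infty)$ together with uniform bounds from $P(T_0)$, or a direct spectral argument using Hypothesis \ref{NC} to transfer estimates between the $H$- and $H_{T_0}$-norms.
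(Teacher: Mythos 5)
Your argument is correct, but it reaches the key inequality $\lim_{t\to+\infty}V(t,x)\le V_\infty(x)$ by a genuinely different mechanism than the paper. The paper argues by contradiction: it restricts an $\eps$-optimal infinite-horizon control to $[-t,0]$, observes that this control reaches the \emph{perturbed} target $x-e^{tA}y(-t)$, shows $\|e^{tA}y(-t)\|_H\to0$ via Hypothesis \ref{NC} and the smoothing bound of Lemma \ref{HH}-(v), and then invokes the uniform continuity of $V$ on $[T_0,+\infty]\times B_H(0,R)$ (imported from the companion paper) to pass from the perturbed target back to $x$. You instead keep the target fixed and prepend a catch-up segment of energy $V(T_0,\hat y_x(-t+T_0))$, so that the error term is controlled by the single operator bound $P(T_0)\in\call(H)$ together with the $H$-decay of the optimal trajectory, which you prove on $\calr(Q_\infty)$ from the explicit formula of Proposition \ref{pr:optcoupleinfty} and transfer to all of $H$ by density and the monotonicity bound $0\le P(t)\le P(T_0)$ for $t\ge T_0$. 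This trades the joint uniform continuity of $V$ in $(t,x)$ for the weaker equiboundedness of $\{P(t)\}_{t\ge T_0}$, which is a real simplification of the imported input; the price is that your density step (and your extension of $V_\infty(x)=\frac12\|x\|_H^2$ from $\calr(Q_\infty)$ to $H$) uses that $V_\infty$ is a \emph{continuous quadratic form} on $H$, a fact you assert but do not justify. It does hold --- $x\mapsto\hat u_x$ is the pseudoinverse of $\call_{-\infty,0}$, hence linear, and $2V_\infty\le2V(T_0,\cdot)\le\|P(T_0)\|_{\call(H)}\|\cdot\|_H^2$ --- but a sentence to this effect is needed to close the argument. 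Note also that the detour through $\calr(Q_\infty)$ is avoidable: Lemma \ref{HH}-(v) makes $Q_\infty^{-1/2}e^{T_0A}$ bounded on $X$, so the $X$-decay of $\hat y_x(-t)$ already yields $H$-decay of $\hat y_x(-t+T_0)$ for every $x\in H$ (modulo the bound $\|\call_{-T_0,0}v\|_H\le C\|v\|_{L^2}$, which also follows from Hypothesis \ref{NC}); this is exactly the device the paper exploits. Finally, for the identity $V_\infty(x)=\frac12\|x\|_H^2$ the paper simply cites the companion paper, while your computation from the optimal control of Proposition \ref{pr:optcoupleinfty} is a correct, self-contained alternative.
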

\begin{proof}
First of all, by \cite[Proposition 4.8-(i)]{AcquistapaceGozzi17}, the function
$V(\cdot,x)$ is decreasing for every $x \in H$; hence, for every such $x$
$$\exists \,\lim_{t\to +\infty}  V(t,x) =\inf_{t>0} V(t,x).$$

We now prove that \(V_{\infty}(x) \le \inf_{t>0} V(t,x)\).
With an abuse of notation we can write
\[
{\cal U}_{[-t,0]}(0,x) \subseteq {\cal U}_{[-\infty,0]}(0,x) \qquad \forall t > 0:
\]
indeed, given a control bringing $0$ to $x$ in the interval $[-t,0]$,
we can extend it to a control bringing $0$ to $x$ in the interval
$[-\infty,0]$ just taking the null control on $\,]-\infty,-t]$.
So, if the set ${\cal U}_{[-t,0]}(0,x)$ is not empty, {\em a fortiori} the set ${\cal U}_{[-\infty,0]}(0,x)$ will be not empty.
This fact, together with the monotonicity of $V(\cdot,x)$ implies that
$V_{\infty}(x) \le \inf_{t>0} V(t,x)$.\\
We prove now that $V_{\infty}(x) = \inf_{t>0} V(t,x)$. Assume by contradiction that $V_{\infty}(x) < \inf_{t>0} V(t,x)$, and let $\eps >0$ be such that $V_{\infty}(x)+2\eps < \inf_{t>0} V(t,x)$. Take $u_\eps \in {\cal U}_{[-\infty,0]}(0,x)$ such that $J_{[-\infty,0]}(u_\eps)< V_\infty (x) + \eps$. By \eqref{eq:defsolution-new} we get
$$
x=\int_{-\infty}^0 e^{-\tau A}Bu_\eps(\tau)\, \ud\tau =e^{tA}y(-t) + \int_{-t}^{0} e^{-\tau A}Bu_\eps(\tau)\,\ud\tau \qquad \forall t>0;
$$
hence we have
$u_\eps|_{[-t,0]} \in {\cal U}_{[-t,0]}(y(-t),x)$,
which in turn implies that
\begin{equation}\label{eq:Vueps}
V(t,x-e^{tA}y(-t)) \le \frac12\int_{-t}^0 \|u_\eps(s)\|_U^2\,\ud s.
\end{equation}
Now we observe that for every $\delta\in\,]0,1[\,$ we may choose $t_\delta>T_0+1$ such that
$\|e^{tA}y(-t)\|_H \le \delta$ for every $t> t_\delta$: indeed, by Hypothesis \ref{NC} and Lemma \ref{HH}-(v) we have
\begin{eqnarray*}
\|e^{tA}y(-t)\|_H & = & \|Q_\infty^{-1/2}e^{tA}y(-t)\|_X
\le \|Q_\infty^{-1/2}e^A\|_{\Lc (X)} \|e^{(t-1)A}y(-t)\|_X \le \\[1mm]
& \le & \|Q_\infty^{-1/2}e^A\|_{\Lc (X)} M e^{-\omega(t-1)} \|y(-t)\|_X\,.
\end{eqnarray*}
Since $y(-t)$ is uniformly bounded in $X$ for $t>0$, we have the claim.\\
Going ahead with the proof, we recall that, by
\cite[Proposition 4.8-(iii)-(b)]{AcquistapaceGozzi17},
we have uniform continuity of $V$ on $[T_0,+\infty]\times B_H(0,R)$
for every $R>0$, where $B_H(0,R)$ is the ball of center $0$ and radius $R$ in $H$. So, setting $R=\|x\|_H + 1$, and denoting by $\rho_R$ the continuity modulus of $V$ on $[T_0,+\infty]\times B_H(0,R)$, we have for $t> t_\delta$
$$
V\left(t,x-e^{tA}y(-t)\right)> V(t,x) - \rho_R (\delta).
$$
The above, together with \eqref{eq:Vueps}, implies that
$$
V(t,x) - \rho_R (\delta)\le V_\infty (x) + \eps \qquad \forall t> t_\delta\,.
$$
Now it is enough to choose $\delta$ such that $ \rho_R (\delta)< \eps$ to get a contradiction.

Finally the last statement follows from
\cite[Proposition 4.8-(iii)-(d)]{AcquistapaceGozzi17}.
\end{proof}

\subsection{Algebraic Riccati Equation}
\label{SSE:ARE}

We deal with the Algebraic Riccati Equation (ARE from now on) associated to our infinite horizon problem. As well known, when the value function is a quadratic form in the state space $X$, the ARE is an equation whose unknown is an operator $R$. A typical goal in studying such ARE is to prove that the operator representing the quadratic form in $X$ given by the value function is a solution (possibly unique) of the associated ARE.
Formally our ARE is given as follows:
\begin{equation}\label{eq:algriccX}
0=-\langle Ax, Ry\rangle_X - \langle Rx, Ay\rangle_X
- \langle {B}^*Rx,{B}^*Ry \rangle_U, \qquad x,y\in \cald(A).
\end{equation}
In our case (see Proposition \ref{pr:exist}) the value function $V_\infty$ is finite only in $H$ so that the operator $R$ above must be unbounded
and the above equation makes sense only for $x,y\in \cald(A)\cap \cald(R)$.
Moreover, by Proposition \ref{pr:VlimitVt}, $V_\infty$ is a quadratic form on the space $H$, represented by the identity operator $I_H \in \call(H)$, i.e. $V_\infty(x)=\frac12 \|x\|^2_H$.
Consequently, transforming such norm in $X$, it must be $V_\infty(x)=\frac12\|Q_\infty^{-1/2}\|_X^2$, and, when
$x \in R(Q_\infty)$,
$V_\infty(x)=\frac12\<Q_\infty^{-1}x,x\>_X$.
Hence it is natural to deduce that the operator representing $V_\infty$ in the space $X$ is $Q_\infty^{-1}$.

Due to the unboundedness of the candidate solution $R$ of the ARE
\eqref{eq:algriccX}, it seems better to study the corresponding ARE in the space $H$, with unknown $P\in \call(H)$ whose form (taking $R=Q_\infty^{-1}P$) must be
(compare with \eqref{eq:algriccintro}):
\begin{equation}\label{eq:algricc}
0=-\langle Ax, Q^{-1}_\infty Py\rangle_X
- \langle Q^{-1}_\infty Px, Ay\rangle_X
-\langle {B}^*Q_\infty^{-1}Px,{B}^*Q_\infty^{-1}Py \rangle_U.
\end{equation}
Note that such expression makes sense only when
$Px,Py \in {\cal R}(Q_\infty)$ and
$x,y\in {\cal D}(A)\cap H$.

%

By Proposition \ref{pr:VlimitVt}, we expect that the positive selfadjoint operator $P=I_H$ associated with the value function $V_\infty$ is a solution of the above ARE \eqref{eq:algricc}. Similarly we expect that $R=Q_\infty^{-1}$ is a solution of the above ARE \eqref{eq:algriccX}.
As they cannot be unique (the zero operator is always a solution of both), we somehow expect such solutions to be maximal in some suitable sense.

\begin{Remark}\label{rm:findimScherpen} {\em
In the finite-dimensional case, when the operator $Q_{\infty}$ is invertible, it is proved that the operator $R=Q_\infty^{-1}$
solves (\ref{eq:algriccX}), using the fact that its inverse
$W=Q_{\infty}$ is the unique solution of the Lyapunov equation
\begin{equation}\label{Lyap}
AW+WA^*=-BB^*
\end{equation}
among all definite positive bounded operators $X\to X$. This is
reported by Scherpen \cite[Theorem 2.2]{Scherpen93}, who quotes
Moore \cite{Moore81} for the proof (see also \cite[Chapters 5 and 7]{LancasterARE95} for related results).
In fact, as we will see, this procedure works
in our infinite dimensional case, too, but with more difficulties.
}
\hfill\qedo
\end{Remark}


\begin{Definition}\label{df:solutionAREHX}
\begin{itemize}
\item[(i)] An operator $P\in \Lc_+(H)$ is a solution of the ARE (\ref{eq:algricc}) if the set $\cald(A)\cap \cald(\Lambda_P)$ (see \eqref{Lambda_P}) is dense in $H$ and the equation (\ref{eq:algricc}) is satisfied for all $x,y\in \cald(A)\cap \cald(\Lambda_P)$.
\item[(ii)] A positive, selfadjoint, possibly unbounded operator $R:{\cal D}(R)\subset X \to X$ is a solution of the ARE (\ref{eq:algriccX}) if the set $\cald(A)\cap \cald(R)$ is dense in $[\ker Q_\infty]^\perp$ (in the topology inherited by $X$) and the equation (\ref{eq:algriccX}) is satisfied for all $x,y\in \cald(A)\cap \cald(R)$.
\end{itemize}
\end{Definition}

%
\begin{Proposition}\label{pr:reldueARE}
The following facts are equivalent.
\begin{description}
\item{(i)} $P\in \call_+(H)$ is a solution to \eqref{eq:algricc};
\item{(ii)} $R=Q_\infty^{-1}P$ is a solution to \eqref{eq:algriccX} and it satisfies, in addition, $Q_\infty^{1/2} R Q_\infty^{1/2} \in \call(X)$.
\end{description}
\end{Proposition}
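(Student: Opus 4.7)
The two equations \eqref{eq:algricc} and \eqref{eq:algriccX} are, term by term, the same equation after the change of unknown $R=\Lambda_P=Q_\infty^{-1}P$: on the common domain $\cald(A)\cap\cald(\Lambda_P)=\cald(A)\cap\cald(R)$ the three inner products of one equation are literally the three inner products of the other. Hence the proposition is purely a functional-analytic statement, and the plan is to verify two independent ingredients, treated in parallel in the two directions: (a)~the boundedness of $P$ on $H$ is equivalent to the auxiliary bound $Q_\infty^{1/2}RQ_\infty^{1/2}\in\call(X)$, together with selfadjointness and positivity; (b)~the density requirements of Definition \ref{df:solutionAREHX}(i) and (ii) can be transported across the correspondence $P\leftrightarrow R$.

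For (i)$\Rightarrow$(ii) the key calculation is
\begin{equation*}
Q_\infty^{1/2}R\,Q_\infty^{1/2}x\;=\;Q_\infty^{1/2}\bigl(Q_\infty^{-1}P\bigr)\bigl(Q_\infty^{1/2}x\bigr)\;=\;Q_\infty^{-1/2}\bigl[P(Q_\infty^{1/2}x)\bigr],
\end{equation*}
which is meaningful because $P$ sends $H=\calr(Q_\infty^{1/2})$ into itself, so the outer pseudoinverse $Q_\infty^{-1/2}$ applies. Using the isometry $\|Q_\infty^{-1/2}h\|_X=\|h\|_H$ for $h\in H$ and $\|Q_\infty^{1/2}x\|_H\le\|x\|_X$, one immediately gets $\|Q_\infty^{1/2}RQ_\infty^{1/2}\|_{\call(X)}\le\|P\|_{\call(H)}$. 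Positivity and symmetry of $R$ on $\cald(R)$ follow from the identity $\langle Rx,y\rangle_X=\langle Px,y\rangle_H$ for $x\in\cald(R)$, $y\in H$, and selfadjointness of $R$ as an unbounded operator from this symmetry is routine. The density requirement in Definition \ref{df:solutionAREHX}(ii) is cheap in this direction: since the $H$-norm dominates the $X$-norm on $H$ (because $\|\cdot\|_X\le\|Q_\infty^{1/2}\|_{\call(X)}\|\cdot\|_H$), $H$-density of $\cald(A)\cap\cald(\Lambda_P)$ in $H$ implies $X$-density in $H$, and in turn $X$-density in $\overline{H}^X=[\ker Q_\infty]^\perp$.

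For (ii)$\Rightarrow$(i) one sets $T:=Q_\infty^{1/2}RQ_\infty^{1/2}\in\call(X)$ and defines
\begin{equation*}
\widetilde P h\;:=\;Q_\infty^{1/2}\,T\,Q_\infty^{-1/2}h,\qquad h\in H,
\end{equation*}
which is well defined thanks to the injectivity of $Q_\infty^{1/2}$ on $[\ker Q_\infty]^\perp$; the bound $\|\widetilde Ph\|_H\le\|T\|_{\call(X)}\|h\|_H$ follows from the same isometry as above, so $\widetilde P\in\call(H)$. One then checks $\widetilde P=Q_\infty R$ on $\cald(R)$, so that $R=Q_\infty^{-1}\widetilde P=\Lambda_{\widetilde P}$ and in particular $\cald(R)\subseteq\cald(\Lambda_{\widetilde P})$. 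Selfadjointness and non-negativity of $\widetilde P$ on $H$ come from the corresponding properties of $R$ on $Q_\infty^{1/2}\cald(R)$ extended by continuity of $T$. Step 1 then delivers the ARE \eqref{eq:algricc} on $\cald(A)\cap\cald(\Lambda_{\widetilde P})$.

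The step I expect to be the main obstacle is the density condition of Definition \ref{df:solutionAREHX}(i) in this reverse direction: one is handed $X$-density of $\cald(A)\cap\cald(R)$ in $[\ker Q_\infty]^\perp$ and needs to produce density (in $H$-norm) of $\cald(A)\cap\cald(\Lambda_{\widetilde P})$ in $H$, which is a strictly stronger statement. The plan is to combine the inclusion $\cald(R)\subseteq\cald(\Lambda_{\widetilde P})$ with a regularisation argument analogous to the one carried out in Lemma \ref{lm:hplemmachiaveI} (exploiting the smoothing action of $Q_\infty^{1/2}$ on $[\ker Q_\infty]^\perp$ and the closedness of $A$) in order to promote $X$-density to $H$-density inside $\cald(A)$; once this transfer is done, the whole equivalence follows from Step~1.
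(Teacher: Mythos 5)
Your direction (i)$\Rightarrow$(ii) matches the paper's proof: the same isometry computation gives $\|Q_\infty^{1/2}RQ_\infty^{1/2}\|_{\call(X)}\le\|P\|_{\call(H)}$, the substitution $R=Q_\infty^{-1}P=\Lambda_P$ turns one equation into the other term by term, and the passage from $H$-density to $X$-density in $[\ker Q_\infty]^\perp$ is exactly as you describe. The gap is in (ii)$\Rightarrow$(i), precisely at the step you flag as ``the main obstacle'' and leave as an unexecuted plan. The difficulty you anticipate --- promoting $X$-density of $\cald(A)\cap\cald(R)$ in $[\ker Q_\infty]^\perp$ to $H$-density of $\cald(A)\cap\cald(\Lambda_{\widetilde P})$ in $H$ --- does not actually arise, and the regularisation route you sketch is not the right tool: one cannot in general upgrade density in a weaker norm to density in a stronger one, and Lemma \ref{lm:hplemmachiaveI} proves something different (density of $\calr(Q_\infty)\cap\cald(A)$ in $\cald(A)\cap H$ for a graph norm, via the Lyapunov equation), so it does not perform the transfer you need.

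The observation you are missing is that the auxiliary hypothesis $Q_\infty^{1/2}RQ_\infty^{1/2}\in\call(X)$ already forces $H=\calr(Q_\infty^{1/2})\subseteq\cald(R)$: the composite can only be an everywhere-defined bounded operator on $X$ if $RQ_\infty^{1/2}x$ makes sense for every $x\in X$. Consequently $P=Q_\infty R$ is defined on all of $H$ with $Px=Q_\infty Rx\in\calr(Q_\infty)$ for every $x\in H$, i.e. $\cald(\Lambda_P)=H$, and the set whose density is required by Definition \ref{df:solutionAREHX}(i) is simply $\cald(A)\cap H$. This contains $\cald(A_0)$, which is dense in $H$ because $e^{tA}|_H$ is a strongly continuous semigroup on $H$ (Lemma \ref{exptAHH}, under Hypothesis \ref{NC}). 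Once this is noted, your construction of $\widetilde P$ and the term-by-term identification of the two equations go through, and this is exactly how the paper concludes; as it stands, though, the key step of your argument is a conjecture about a harder (and probably false in the generality you pose it) density-transfer statement rather than a proof.
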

\begin{proof}
{\bf (i)} Assume that $P\in \call_+(H)$ solves \eqref{eq:algricc}. Then, in particular the set $\cald(A)\cap \cald(\Lambda_P)$ is dense in $H$. Setting $R=Q_\infty^{-1}P$ we see that its domain is exactly $\cald(\Lambda_P)$, which is dense in $[\ker Q_\infty]^\perp$. The fact that such $R$ satisfies \eqref{eq:algriccX}
for every $x,y\in \cald(A)\cap \cald(\Lambda_P)$ follows by simple substitution. Finally, for every $x\in X$ we have
$$\|Q_\infty^{1/2} R Q_\infty^{1/2}x\|_X =\|Q_\infty^{-1/2}P Q_\infty^{1/2}x\|_X = \|P Q_\infty^{1/2}x\|_H \le \|P\|_{\call(H)} \|Q_\infty^{1/2}x\|_H = \|P\|_{\call(H)} \|x\|_X\,.$$
{\bf (ii)} Let $R:\cald(R)\to X$ be a solution of \eqref{eq:algriccX}, having the property $Q_\infty^{1/2} R Q_\infty^{1/2} \in \call(X)$: note that, in this case, $\cald(R)$ must coincide with $H$. Thus $\cald(A)\cap \cald(R)$ is dense in H, since it contains $\cald(A_0)$. We set $P=Q_\infty R$: then $P\in \call_+(H)$ since, for every $x\in H$,
\bey
\|Px\|_H  & = & \|Q_\infty Rx\|_H = \|Q_\infty^{1/2}[Q_\infty^{1/2} R Q_\infty^{1/2}] Q_\infty^{-1/2}x\|_H = \|[Q_\infty^{1/2} R Q_\infty^{1/2}]
Q_\infty^{-1/2}x\|_X \le \\[1mm]
& \le & \|Q_\infty^{1/2} R Q_\infty^{1/2}\|_{\call(X)} \|Q_\infty^{-1/2}x\|_X = \|Q_\infty^{1/2} R Q_\infty^{1/2}\|_{\call(X)} \|x\|_H\,.
\eey
Moreover, we see immediately that $\cald(\Lambda_P)=H$. In addition, \eqref{eq:algriccX} transforms into \eqref{eq:algricc}, and it holds for every $x,y\in \cald(A)\cap \cald(R)$, i.e. it holds for every $x,y\in \cald(A)\cap H=\cald(A)\cap \cald(\Lambda_P)$, as required by Definition \ref{df:solutionAREHX}.
\end{proof}

Concerning the two AREs (\ref{eq:algricc}) and (\ref{eq:algriccX})
we have the following result.

\begin{Theorem}\label{th:maximalARE}
Let Hypothesis \ref{NC} hold true.
\begin{itemize}
  \item[(i)] The operator $R=Q_{\infty}^{-1} $ is a solution of the
Riccati equation (\ref{eq:algriccX}) in the sense of Definition \ref{df:solutionAREHX}(ii).

  \item[(ii)] The operator $P=I_H$ is a solution of the Riccati equation (\ref{eq:algricc}) in the sense of Definition \ref{df:solutionAREHX}(i).
  \item[(iii)] Assume that $BB^*$ is coercive. Then the operator $I_H$ is the maximal solution of (\ref{eq:algricc}) in the following sense: if $\hat P$ is another solution of (\ref{eq:algricc}) in the sense of Definition \ref{df:solutionAREHX}-(i), belonging to the class $\calq$ introduced in Definition \ref{hp:hplemmachiave},
      then
$$\frac12\langle \hat P x,x \rangle_H \le\frac12 \langle x,x\rangle_H =V_\infty(x)
\qquad \forall x\in H.$$
\end{itemize}
\end{Theorem}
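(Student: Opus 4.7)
The plan is to handle the three parts in sequence: parts (i) and (ii) are algebraic consequences of the Lyapunov equation for $Q_\infty$, while part (iii) is assembled from the already-established Lemma \ref{lem:massimalitaN}, Proposition \ref{pr:VNeV}, and Proposition \ref{pr:VlimitVt}.

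For part (ii), I would first verify the density requirement in Definition \ref{df:solutionAREHX}(i): noting that $\cald(\Lambda_{I_H})=\calr(Q_\infty)$, Lemma \ref{lm:hplemmachiaveI} shows that $\cald(A)\cap\calr(Q_\infty)$ is dense in $\cald(A)\cap H$ for the graph norm $\|\cdot\|_H+\|A\cdot\|_X$, which in turn is dense in $H$ for the $H$-norm. To check the algebraic identity, I would write $x=Q_\infty a$, $y=Q_\infty b$ on this dense set and observe that \eqref{eq:algricc} collapses to the symmetric Lyapunov identity
\[
\langle AQ_\infty a,b\rangle_X+\langle a,AQ_\infty b\rangle_X+\langle B^*a,B^*b\rangle_U=0,
\]
which holds because $Q_\infty$ satisfies the Lyapunov equation $AQ_\infty+Q_\infty A^*=-BB^*$ recalled in Proposition \ref{pr:optcoupleinfty}. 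Part (i) then follows by the equivalence in Proposition \ref{pr:reldueARE}, since $Q_\infty^{1/2}Q_\infty^{-1}Q_\infty^{1/2}$ reduces to the identity on $[\ker Q_\infty]^\perp$ and the extra boundedness condition is automatic.

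For part (iii) the strategy is a three-step chain. Let $\hat P\in\calq$ be any solution of \eqref{eq:algricc}; since $BB^*$ is coercive, Lemma \ref{lem:massimalitaN} applies with $P=\hat P$ and yields
\[
\tfrac12\langle\hat Px,x\rangle_H\le V^{\hat P}(t-T_0,x)\qquad\forall x\in H,\ \forall t>T_0.
\]
Next, the trivial bound \eqref{eq:V0etVN-new} in Proposition \ref{pr:VNeV} (choose $z=0$) gives $V^{\hat P}(t-T_0,x)\le V(t-T_0,x)$, and Proposition \ref{pr:VlimitVt} provides $V(t-T_0,x)\to V_\infty(x)=\tfrac12\|x\|_H^2$ as $t\to+\infty$. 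Chaining these three bounds and sending $t\to+\infty$ delivers the desired estimate $\tfrac12\langle\hat Px,x\rangle_H\le\tfrac12\|x\|_H^2=V_\infty(x)$.

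The genuine technical content is entirely absorbed by Lemma \ref{lem:massimalitaN}, whose proof required the delicate double approximation in $n$ and $k$ together with the coercivity of $BB^*$; once that lemma is in hand, the maximality claim is pure bookkeeping. In parts (i) and (ii) the only subtlety worth watching is the domain calculus, namely that the test vectors be chosen so that both $a$ and $Q_\infty b$ lie in $\cald(A)$ for the Lyapunov identity to be invoked, and Lemma \ref{lm:hplemmachiaveI} is precisely what guarantees that such a choice provides a dense set.
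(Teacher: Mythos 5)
Your proposal follows essentially the same route as the paper: parts (i)--(ii) rest on the Lyapunov equation for $Q_\infty$ together with the density statements of Lemmas \ref{dens} and \ref{lm:hplemmachiaveI}, and part (iii) is exactly the paper's chain Lemma \ref{lem:massimalitaN} $\to$ \eqref{eq:V0etVN-new} $\to$ Proposition \ref{pr:VlimitVt} (the paper merely proves (i) first and reads (ii) off the same identity, whereas you prove (ii) and invoke Proposition \ref{pr:reldueARE} for (i) --- an immaterial difference). The one place where your sketch is thinner than the written proof is the claim that \eqref{eq:algricc} with $P=I_H$ ``collapses'' to the symmetric Lyapunov identity: that identity is immediate only for $a\in\cald(A^*)$, so to cover \emph{all} $x,y\in\cald(A)\cap\calr(Q_\infty)=Q_\infty(\cald(AQ_\infty))$ as Definition \ref{df:solutionAREHX} demands, the paper first extends $\xi\mapsto\langle AQ_\infty\xi,\eta\rangle_X$ by boundedness to $\xi\in\cald(AQ_\infty)$ and then checks that the $\ker Q_\infty$-components of $a,b$ (which annihilate $B^*$ but, a priori, not $\langle AQ_\infty\,\cdot,\cdot\rangle_X$) drop out --- a small but genuine piece of domain bookkeeping rather than a flaw in your strategy.
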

\begin{proof} {\bf (i)}
By \cite[Proposition 3.3]{AcquistapaceGozzi17}, $Q_\infty$ solves the Lyapunov equation, i.e. we have for every $\xi\in {\cal D}(A^*)$
$$AQ_\infty \xi+ Q_\infty A^*\xi+  BB^*\xi=0.$$
This implies that, for every $\xi\in {\cal D}(A^*)$ and $\eta \in X$,
$$
\langle AQ_\infty \xi,\eta  \rangle _X +
\langle Q_\infty A^*\xi, \eta \rangle _X
 +  \langle B^*\xi, B^*\eta \rangle _U=0.
$$
When $\eta \in \cald(AQ_\infty)$ the second term above rewrites as
$\langle \xi, AQ_\infty\eta \rangle _X$.
Consequently, when $\eta \in \cald(AQ_\infty)$, the functional
$\xi \to \<AQ_\infty \xi, \eta\>_X$, well defined since $\xi \in \cald(A^*)$, can be extended to a bounded linear operator on $X$, since it is equal to $-\<\xi,AQ_\infty \eta\>_X - \<B^* \xi, B^* \eta\>_U$.
Hence, choosing $\xi \in \cald(AQ_\infty)$, we get, for
$\xi, \eta \in \cald(AQ_\infty)$, that
\begin{equation}\label{eq:Lyapgiusta}
  \langle AQ_\infty \xi,\eta  \rangle _X +
\langle \xi, AQ_\infty\eta \rangle _X
 +  \langle B^*\xi, B^*\eta \rangle _U=0.
\end{equation}
Now set $x=Q_\infty \xi$ and $y=Q_\infty \eta$. Then $x,y \in {\cal D}(A)$ and the above rewrites as
\begin{equation}\label{eq:AREdim1}
\langle  Ax,\eta  \rangle _X + \langle \xi, Ay \rangle _X +  \langle B^*\xi, B^*\eta \rangle _U=0.
\end{equation}
Observe that $\xi=Q_\infty^{-1}x+\xi_0$ and $\eta=Q_\infty^{-1}y+\eta_0$
for suitable $\xi_0,\eta_0\in \ker Q_\infty\subseteq \ker B^*$.
Hence, using the fact that $Q_\infty$ solves the Lyapunov equation in the form (\ref{eq:Lyapgiusta}),
we have, for $\xi \in \cald(AQ_\infty)$,
$$
\langle Ax, \eta_0  \rangle _X =
\langle  AQ_\infty \xi,\eta_0  \rangle _X =
-\langle  \xi,AQ_\infty\eta_0 \rangle _X
-\langle  B^*\xi,B^*\eta_0\rangle _U=0
$$
and, similarly, for $\eta\in \cald(AQ_\infty)$, $\langle \xi_0, Ay \rangle _X =0$. We then get, substituting into \eqref{eq:AREdim1} and observing that
$B^*\xi_0= B^*\eta_0=0$,
\begin{equation}\label{eq:AREQinfty}
\langle Ax, Q_\infty^{-1} y \rangle _X
+\langle  Q_\infty^{-1}x,Ay \rangle _X + \langle B^* Q_\infty^{-1}x , B^* Q_\infty^{-1}y\rangle _U=0, \quad +x,y \in
Q_\infty ({\cal D}(AQ_\infty)).
\end{equation}
The above is exactly equation (\ref{eq:algriccX}) for $R=Q_\infty^{-1}$.
To end the proof of (i),
it is enough to observe that $Q_\infty ({\cal D}(AQ_\infty))$ is dense in $[\ker Q_\infty]^\perp$ (using Remark \ref{rm:densrem} and the fact that it contains $Q_\infty(\cald(A^*))$), and moreover that
$$
Q_\infty ({\cal D}(AQ_\infty))=\cald(A)\cap \calr(Q_\infty)=\cald(A)\cap\cald(Q_\infty^{-1}).
$$
Indeed if $x \in Q_\infty ({\cal D}(AQ_\infty))$ then it must be $x=Q_\infty \xi$ with $\xi \in ({\cal D}(AQ_\infty))$, so that
$AQ_\infty \xi$ is well defined and, clearly, it coincides with $Ax$, proving that $x\in \cald(A)$. Obviously it must also be $x \in \calr(Q_\infty)$.
The converse is similar.\\[1mm]
{\bf (ii)} It is enough to observe that \eqref{eq:AREQinfty} coincides with \eqref{eq:algricc} with $P=I_H$, and that $\cald(\Lambda_{I_H})=R(Q_\infty)$.\\[1mm]
{\bf (iii)}
Let $\hat P$ be a solution of (\ref{eq:algricc}) belonging to the class $\calq$ introduced in Definition \ref{hp:hplemmachiave}. It is immediate to see that $\hat P$ is a stationary solution of (\ref{eq:RiccatiNH}) in the sense of Definition \ref{df:hplemmachiave}.
Now we apply Lemma \ref{lem:massimalitaN} and (\ref{eq:V0etVN-new}),
getting
$$
\frac12\langle \hat Px, x \rangle_H \le V^{\hat P}(t,x) \le V(t,x) \quad
x\in H, \quad t>T_0.
$$
Taking the limit as $t\to +\infty$, the result follows by Proposition \ref{pr:VlimitVt}.
\end{proof}


\begin{Remark}\label{rm:Cuu} {\em The statement of Theorem \ref{th:maximalARE} still holds if we consider the slightly more general problem where the energy functional
has the integrand $\langle Cu,u\rangle_U$ instead of $\langle u,u\rangle_U\,$, where $C\in {\mathcal L}_+(U)$ is coercive and hence invertible. Indeed it is enough to define the new control variable $v= C^{1/2}u$ and, consequently, to replace the control operator $B$ in the state equation by $BC^{-1/2}$.}
\hfill\qedo
\end{Remark}

\begin{Remark}\label{rm:EXA} {\em Theorem \ref{th:maximalARE} can be applied to a variety of cases (e.g. delay equations treated in \cite[Subsection 5.1]{AcquistapaceGozzi17} or wave equations).
Here, according to our motivating example arising in physics, we develop more deeply the analysis when
the operator $A$ is selfadjoint and commutes with $BB^*$
and, in particular, when both are diagonal. This will be done in the next section.}
\hfill\qedo
\end{Remark}

\section{The selfadjoint commuting case}

We consider the case where $A$ is selfadjoint and invertible and commutes with $BB^*$. To apply Theorem \ref{th:maximalARE} we need that $BB^*$ is coercive; hence we assume the following:
\begin{Hypothesis}
  \label{hp:comm}
$A$ is selfadjoint and invertible and commutes with $BB^*$ i.e. for every $x \in \mathcal{D}(A)$ we have $BB^*x\in \mathcal{D}(A)$ and
$ABB^*x=BB^*Ax$.
Moreover, $BB^*$ is coercive, i.e., for a suitable $\mu>0$, $\|B^*x\|_U\ge \mu \|x\|_X$ for all $x\in X$.
\end{Hypothesis}
From \cite[Proposition C.1-(v)]{AcquistapaceGozzi17}
we know that, for every $x \in X$,
$$
Q_\infty x=-\frac12A^{-1}BB^*x.
$$
This implies that ${\cal R}(Q_\infty) ={\cal D}(A)$, and, as $BB^*$ is invertible in $X$, we have $Q_\infty^{-1}x=-2(BB^*)^{-1}Ax$ for every $x \in {\cal R}(Q_\infty)$
(see again \cite[Proposition C.1-(v)]{AcquistapaceGozzi17}).
Hence the Riccati equation \eqref{eq:algricc} in $H$ (with unknown $P\in \call(H)$), becomes
\begin{equation}\label{eq:commutingX}
0=-\< Ax,Q_\infty^{-1} Py \>_X
-\< Q_\infty^{-1} Px,Ay \>_X
+2 \< APx,Q_\infty^{-1}Py \>_X.
\end{equation}
This makes sense, as for \eqref{eq:algricc}, when $x,y \in \cald(A)\cap \cald(\Lambda_P)$ (see Definition \ref{df:hplemmachiave}).
We now want to rewrite this equation using the inner products in $H$.
Observe first that in $\calr (Q_\infty)$ we have
$Q_\infty^{-1}=Q_\infty^{-1/2}Q_\infty^{-1/2}$.
Then, if $Ax$, $Ay$ and $APx$ belong to $H$, we rewrite \eqref{eq:commutingX} as
\begin{equation}\label{eq:commutingH}
0=-\< Ax,Py \>_H
-\< Px,Ay \>_H
+2 \< APx,Py \>_H.
\end{equation}
Now, recalling the definition of $A_0$ (see Notation \ref{eq:A0} and Lemma \ref{exptAHH})-(ii)), equation \eqref{eq:commutingH} can be equivalently rewritten as
\begin{equation}\label{eq:commutingA0}
0=-\langle A_0x,Py \rangle_H -\langle Px, A_0y \rangle_H + 2\langle A_0 Px,Py \rangle_H,
\end{equation}
provided that $x,y,Px,Py$ belong to ${\cal D}(A_0)$.\\
We now clarify the relationship between \eqref{eq:commutingX} and \eqref{eq:commutingA0}. First we set
\begin{equation}\label{eq:D2}
D^P:=\left\{ x \in {\cal D}(A_0): \;  Px \in {\cal D}(A_0) \right\}.
\end{equation}
%
%
Next, we provide the following definition of solution for \eqref{eq:commutingA0} (compare with Definition \ref{df:solutionAREHX}):
\begin{Definition}\label{df:solutionAREHcomm}
An operator $P\in \Lc_+ (H)$ is a solution of the ARE (\ref{eq:commutingA0})
if the set $D^P$ is dense in $H$ and the equation (\ref{eq:commutingA0})
is satisfied for every $x,y\in D^P$.
\end{Definition}
Finally, we observe that every solution of \eqref{eq:commutingX}
is also a solution of (\ref{eq:commutingA0}): indeed, if $P\in \Lc_+ (H)$, then, by definition, we have $D^P\subseteq \cald(A)\cap \cald(\Lambda_P)$. Hence, if $P\in \Lc_+(H)$ solves equation (\ref{eq:commutingX}), then, choosing in particular $x,y\in D^P$ we can turn (\ref{eq:commutingX}) into (\ref{eq:commutingA0}).

The reverse procedure is also possible: we postpone the proof at the end of the Section, since some more informations on solutions $P$ of \eqref{eq:commutingA0} are needed.\\[2mm]
We now give a preparatory result about the properties of such solutions.

\begin{Proposition}\label{pr:pre-commuting}
Assune Hypothesis \ref{hp:comm}. Then any solution $P$ of (\ref{eq:commutingA0}) satisfies
\begin{equation}\label{eq:symm}
\<A_0x,A_0Pz\>_H=\<A_0Px,A_0z\>_H\qquad \forall x,z \in D^P.
\end{equation}
\end{Proposition}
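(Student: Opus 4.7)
The plan is to convert the Riccati equation \eqref{eq:commutingA0} into an operator identity on $D^P$, and then to exploit a hidden algebraic conjugation structure which makes the symmetry manifest.

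As a preparatory step, I use the selfadjointness of $A_0$ on $H$ (valid in the commuting case because $A=A^*$ and the $H$-inner product is built from $Q_\infty^{-1/2}$, which commutes with $A$) together with the selfadjointness of $P\in\Lc_+(H)$ to shift all operators onto the test vector $y$ in \eqref{eq:commutingA0}: namely, $\<A_0 x,Py\>_H=\<PA_0 x,y\>_H$, $\<Px,A_0 y\>_H=\<A_0 Px,y\>_H$, and $\<A_0 Px,Py\>_H=\<PA_0 Px,y\>_H$. Since $D^P$ is dense in $H$, the arbitrariness of $y\in D^P$ yields the operator identity
\beq\label{aux:opARE}
A_0 Px + PA_0 x = 2PA_0 Px, \qquad x\in D^P.
\eeq

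The core observation is that \eqref{aux:opARE} is equivalent to a conjugation relation for the bounded selfadjoint operator $T:=I-2P\in\Lc(H)$. Indeed, expanding $(I-2P)A_0(I-2P)x$ and substituting \eqref{aux:opARE}, the mixed terms cancel and one obtains $TA_0 Tx=A_0 x$ for $x\in D^P$. Two companion identities, both immediate from \eqref{aux:opARE}, will also be used: $TA_0 Px=-PA_0 x$ and $PA_0 Tx=-A_0 Px$ on $D^P$.

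With these tools in hand, for $x,z\in D^P$ the desired symmetry will follow from a short chain: starting with $\<A_0 x,A_0 Pz\>_H=\<TA_0 Tx, A_0 Pz\>_H$, moving one $T$ across by its selfadjointness gives $\<A_0 Tx, TA_0 Pz\>_H$; using $TA_0 Pz=-PA_0 z$ and then transferring $P$ by its selfadjointness gives $-\<PA_0 Tx, A_0 z\>_H$; and finally $PA_0 Tx=-A_0 Px$ produces $\<A_0 Px, A_0 z\>_H$, with the two minus signs cancelling.

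The only real obstacle is domain bookkeeping: at every step one must check that $A_0$ is applied to a vector in $\cald(A_0)$. This comes for free from $D^P\subseteq\cald(A_0)$ and $P(D^P)\subseteq\cald(A_0)$, which yield $Tx=x-2Px\in\cald(A_0)$ as well as $A_0 x, A_0 Px, A_0 Tx\in H$; the bounded operators $P$ and $T$ can then be applied freely in $H$, and no additional regularity beyond membership in $D^P$ is required.
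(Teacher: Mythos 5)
Your proof is correct, and it takes a genuinely different route from the paper's. Both arguments begin identically: using the selfadjointness of $A_0$ in $H$ (Lemma \ref{lm:QinftyA0}-(iii)) and of $P$, together with the density of $D^P$, the ARE \eqref{eq:commutingA0} is converted into the operator identity $A_0Px+PA_0x=2PA_0Px$ on $D^P$ (this is \eqref{eq:prima} in the paper, extended to all $y\in H$). From there the paper proceeds by direct computation: it substitutes the specific test vector $y=PA_0z-A_0Pz$ into that identity, reorganizes the resulting inner products twice (once acting on each slot) to obtain the two auxiliary relations \eqref{eq:seconda} and \eqref{eq:terza}, and reads off the symmetry by comparing them. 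You instead package the identity as the conjugation relation $TA_0Tx=A_0x$ for the bounded selfadjoint operator $T=I_H-2P$, supplemented by the companion identities $TA_0Px=-PA_0x$ and $PA_0Tx=-A_0Px$ (all three are immediate consequences of the operator identity, as I have checked); the symmetry then drops out of a four-step chain in which $T$ and $P$ are shuttled across the inner product and the two minus signs cancel. Your domain bookkeeping is sound: $x\in D^P$ gives $x,Px\in\cald(A_0)$, hence $Tx\in\cald(A_0)$ with $A_0Tx=A_0x-2A_0Px\in H$, and every remaining application of $T$ or $P$ acts on a vector of $H$. The conjugation viewpoint is shorter and makes the mechanism transparent --- it also foreshadows Theorem \ref{th:sol=proj}, since $T$ becomes a genuine involution once $P=P^2$ is established --- while the paper's computation is more pedestrian but introduces no auxiliary operator. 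Both are complete proofs.
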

\begin{proof}
Let $P$ be a solution of \eqref{eq:commutingA0}. We observe that for all $x,y \in D^P$ we have, since $A_0$ is selfadjoint in $H$ (see Lemma \eqref{lm:QinftyA0}-(iii)),
\begin{equation}\label{eq:prima}
\<A_0 Px,y\>_H+\<PA_0 x, y\>_H = 2\<PA_0 Px,y\>_H\,.
\end{equation}
By density, this equation holds for every $x\in D^P$ and $y\in H$. Symmetrically we have also
\begin{equation}\label{eq:prima_bis}
\<x,PA_0y\>_H+\<x,A_0P y\>_H = 2\<x,PA_0 Py\>_H
\end{equation}
for every $x\in H$ and $y\in D^P$. We choose in \eqref{eq:prima} $y=PA_0z-A_0Pz$, with $z\in D^P$, and we obtain:
\bey
\lefteqn{\langle A_0 Px, PA_0z \rangle_H - \langle A_0 Px, A_0Pz \rangle_H + \langle PA_0x,PA_0z \rangle_H - \langle PA_0x,A_0Pz \rangle_H}\\
& & \qquad \qquad \qquad \qquad \qquad \qquad = 2\langle PA_0Px,PA_0z \rangle_H - 2\langle PA_0Px,A_0Pz \rangle_H\,.
\eey
We isolate on the left the symmetric terms:
\bey
\lefteqn{2\langle PA_0Px,A_0Pz \rangle_H - \langle A_0 Px, A_0Pz \rangle_H + \langle PA_0x,PA_0z \rangle_H}\\
& & \qquad \qquad \qquad = -\langle A_0 Px, PA_0z \rangle_H+\langle PA_0x,A_0Pz \rangle_H+ 2\langle PA_0Px,PA_0z \rangle_H\,.
\eey
Next, we apply \eqref{eq:prima} to the last term on the right:
\bey
\lefteqn{2\langle PA_0Px,A_0Pz \rangle_H - \langle A_0 Px, A_0Pz \rangle_H + \langle PA_0x,PA_0z \rangle_H}\\
& & = -\langle A_0 Px, PA_0z \rangle_H+\langle PA_0x,A_0Pz \rangle_H+ \langle A_0Px,PA_0z \rangle_H + \langle PA_0x,PA_0z \rangle_H\,,
\eey
which simplifies to
$$2\langle PA_0Px,A_0Pz \rangle_H -\langle A_0 Px,A_0Pz \rangle_H= \langle PA_0x,A_0Pz \rangle_H\,.$$
Applying \eqref{eq:prima_bis} to the term on the right, rewritten as $\langle A_0x,PA_0Px\rangle_H$, we obtain for every $x,z\in D^P$
\begin{equation}\label{eq:seconda} 2\langle PA_0Px,A_0Pz \rangle_H -\langle A_0 Px,A_0Pz \rangle_H -\frac12 \langle PA_0x,A_0z\rangle_H =\frac12 \langle A_0x,A_0Pz\rangle_H\,.
\end{equation}
We now restart from \eqref{eq:prima_bis}, and choose $x=PA_0z-A_0Pz$, with $z\in D^P$: acting on the left variable of the inner product, and proceeding exactly in the same way as before, we get for every $z,y\in D^P$
\begin{equation}\label{eq:terza} 2\langle A_0Pz,PA_0Py \rangle_H -\langle A_0 Pz,A_0Py \rangle_H -\frac12 \langle PA_0z,A_0y\rangle_H =\frac12 \langle A_0Pz,A_0y\rangle_H\,.
\end{equation}
Comparing equations \eqref{eq:seconda} and \eqref{eq:terza}, both written with variables $x,y$, we immediately obtain
$$\frac12 \langle A_0x,A_0Py\rangle_H = \frac12 \langle A_0Px,A_0y\rangle_H\,, \quad x,y\in D^P,$$
which is \eqref{eq:symm}.
\end{proof}
We can now prove:
\begin{Theorem}\label{th:commuting} Assume Hypothesis \ref{hp:comm}. Then any solution $P$ of (\ref{eq:commutingA0}) commutes with $A_0$, i.e. $Px\in \cald(A_0)$ for every $x\in \cald(A_0)$ and
$$A_0Px = PA_0x \qquad \forall x\in \cald(A_0).$$
In particular $D^P=\cald(A_0)$.
\end{Theorem}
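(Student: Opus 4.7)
The central input is Proposition \ref{pr:pre-commuting}, which gives the symmetry relation \eqref{eq:symm}: $\langle A_0 x, A_0 Pz\rangle_H = \langle A_0 Px, A_0 z\rangle_H$ for all $x,z\in D^P$. The plan is to use this, together with the ARE \eqref{eq:commutingA0} itself, to establish that $P$ commutes with the bounded operator $A_0^{-1}$ on $H$, from which the commutation with $A_0$ follows.

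First, I would introduce the auxiliary operator $T:=A_0PA_0^{-1}$, defined a priori on the domain $A_0(D^P)$, which is dense in $H$ (by Definition \ref{df:solutionAREHcomm} and the bijectivity of $A_0:\cald(A_0)\to H$, itself a consequence of the invertibility of $A$ in Hypothesis \ref{hp:comm}). Substituting $u=A_0 x$, $v=A_0 z$ in \eqref{eq:symm} gives
\[
\langle Tu,v\rangle_H=\langle u,Tv\rangle_H,\qquad u,v\in A_0(D^P),
\]
so $T$ is symmetric. Next, from \eqref{eq:commutingA0} written as $A_0 Px+PA_0 x=2PA_0 Px$ on $D^P$ and the substitution $x=A_0^{-1}u$, we get the algebraic identity
\[
(2P-I)\,Tu=Pu,\qquad u\in A_0(D^P).
\]

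The crucial step is to show $T=P$ on $A_0(D^P)$. The symmetry of $T$ together with the selfadjointness of $2P-I$ yields, after transposition of the algebraic identity, that $T$ commutes with $2P-I$, hence with $P$, on a dense subset. Invoking the maximality statement of Theorem \ref{th:maximalARE}, we have $P\le I_H$, so $\sigma(P)\subseteq[0,1]$. Using the joint spectral decomposition of $P$ and $T$ (via the spectral theorem for $P$), the scalar identity $(2p-1)t=p$ must hold on the spectral subspace where $P$ acts as $pI$; consistency with $T=P$ (i.e. $t=p$) forces $p(2p-1)=p$, i.e. $p\in\{0,1\}$, and in particular rules out $\tfrac12\in\sigma(P)$. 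On each such spectral subspace $T=pI=P$, so $T=P$ on $A_0(D^P)$. Since $P$ is bounded, the identity extends by continuity to $A_0PA_0^{-1}=P$ on all of $H$, equivalently $PA_0^{-1}=A_0^{-1}P$.

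From $PA_0^{-1}=A_0^{-1}P$ on $H$, for every $y\in\cald(A_0)$, setting $u=A_0y$, one has $Py=PA_0^{-1}u=A_0^{-1}Pu\in\calr(A_0^{-1})=\cald(A_0)$ with $A_0Py=Pu=PA_0y$. Hence $P\cald(A_0)\subseteq\cald(A_0)$ and $A_0P=PA_0$ on $\cald(A_0)$, which also gives $D^P=\cald(A_0)$. The main obstacle is the middle step: the identity $(2P-I)Tu=Pu$ alone does not determine $T$ uniquely when $\tfrac12\in\sigma(P)$, so the combination with the symmetry of $T$ and the a priori bound $P\le I_H$ must be handled carefully. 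One must also avoid any circular reliance on the later Theorem \ref{th:sol=proj} (which characterises solutions as projections): here $P^2=P$ should emerge as a by-product of the spectral argument rather than being assumed.
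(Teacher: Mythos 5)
Your opening and your endgame coincide with the paper's: you extract the symmetry relation \eqref{eq:symm} from Proposition \ref{pr:pre-commuting}, introduce $T:=A_0PA_0^{-1}$ on $A_0(D^P)$, and, once $PA_0^{-1}=A_0^{-1}P$ is in hand, conclude correctly. But the middle of the argument is not a proof. Two preliminary objections: (a) the density of $A_0(D^P)$ in $H$ does \emph{not} follow from the density of $D^P$ and the bijectivity of $A_0$ --- an unbounded bijection need not map dense sets to dense sets; the paper explicitly flags this point and recovers density only at the end, from $\cald(A_0)\subseteq\cald((A_0PA_0^{-1})^*)\subseteq\cald(A_0PA_0^{-1})$. (b) Invoking Theorem \ref{th:maximalARE} to get $P\le I_H$ is circular here: that theorem concerns solutions of \eqref{eq:algricc} in the class $\calq$, and both the equivalence of \eqref{eq:commutingA0} with \eqref{eq:commutingX} (Proposition \ref{pr:equivcomm}) and the membership $P\in\calq$ (Remark \ref{rm:max}) are established in the paper only \emph{after} Theorem \ref{th:commuting}, and both rest on the identity $D^P=\cald(A_0)$ that you are trying to prove.

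The decisive gap is the ``crucial step'' $T=P$. Transposing $(2P-I)Tu=Pu$ via the symmetry of $T$ yields only the weak intertwining $\langle Tu,(2P-I)v\rangle_H=\langle (2P-I)u,Tv\rangle_H$ for $u,v\in A_0(D^P)$; it does not show that the a priori unbounded, merely symmetric $T$ commutes with $P$, nor does it license a joint spectral decomposition. Even granting these, the scalar relation $(2p-1)t=p$ gives $t=p/(2p-1)$, which equals $p$ only when $p\in\{0,1\}$; you derive $p\in\{0,1\}$ from ``consistency with $t=p$'', i.e.\ by assuming the conclusion (for $p=1/4$ the identity would give $t=-1/2\ne p$). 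The ingredient you never use --- and which carries the paper's proof --- is the explicit identification of the adjoint, $(A_0PA_0^{-1})^*x=A_0^{-1}PA_0x$ for $x\in\cald(A_0)$, obtained from $\langle x,A_0PA_0^{-1}y\rangle_H=\langle A_0^{-1}PA_0x,y\rangle_H$. Combined with the symmetry of $A_0PA_0^{-1}$ this forces $A_0PA_0^{-1}=A_0^{-1}PA_0$ on $\cald(A_0)$, hence $PA_0^{-2}=A_0^{-2}P$ on $H$, and the square-root theorem \cite[Theorem VI.9]{Reed-Simon} then gives $PA_0^{-1}=A_0^{-1}P$. Note that the paper never needs $T=P$ at this stage; the algebraic identity $(2P-I)T=P$ enters only afterwards, in Theorem \ref{th:sol=proj}, to conclude $P^2=P$.
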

\begin{proof}
We start from \eqref{eq:symm} with $w=A_0x$ and $y=A_0z$, i.e.
\begin{equation}\label{eq:symm2}
\<w,A_0PA_0^{-1}y\>_H=\<A_0PA_0^{-1}w,y\>_H\qquad \forall w,y \in A_0(D^P).
\end{equation}
Notice that $A_0(D^P)$ is the natural domain of the operator $A_0PA_0^{-1}$; which might be ({\em a priori}) not dense in $H$. Let us denote by $Z$ the closure of $\cald(A_0PA_0^{-1}) $in $H$; so we have
$$Z:=\overline{A_0(D^P)} = \overline{\cald(A_0PA_0^{-1})}.$$
Obvoiusly $Z$ is a Hilbert space with the inner product of $H$.
Equation \eqref{eq:symm2} then tells us that $A_0(D^P)\subseteq \cald((A_0PA_0^{-1})^*)$ and
\begin{equation}\label{eq:autoagg1}
(A_0PA_0^{-1})^*w = A_0PA_0^{-1}w \qquad \forall w\in A_0(D^P)=\cald
(A_0PA_0^{-1}).
\end{equation}
On the other hand, if $x\in \cald(A_0)$ and $y\in \cald(A_0PA_0^{-1})$ we may write
$$\<x,A_0PA_0^{-1}y\>_H =\<A_0^{-1}PA_0x,y\>_H\,;$$
consequently
\begin{equation}\label{eq:domdenso}
\cald(A_0) \subseteq \cald((A_0PA_0^{-1})^*)
\end{equation}
and
\begin{equation}\label{eq:autoagg2}
(A_0PA_0^{-1})^*x = A_0^{-1}PA_0x \qquad \forall x\in \cald(A_0).
\end{equation}
We now claim that $A_0PA_0^{-1}$ is selfadjoint in the space $H$, i.e.
\begin{equation}\label{eq:domains}
\cald((A_0PA_0^{-1})^*)=\cald(A_0PA_0^{-1})=A_0(D^P)
\end{equation}
is dense in $H$ and \eqref{eq:autoagg1} holds.\\
Indeed, assume that $z\in \cald((A_0PA_0^{-1})^*)$: then there is $c>0$ such that
$$|\<A_0PA_0^{-1}x,z\>_H|\le c\|x\|_H \qquad\forall x\in \cald(A_0PA_0^{-1}).$$
In particular, by \eqref{eq:autoagg1},
$$\<x,(A_0PA_0^{-1})^*z\>_H = \<A_0PA_0^{-1}x,z\>_H =\<(A_0PA_0^{-1})^*x,z\>_H \quad \forall x\in \cald(A_0PA_0^{-1}).$$
This shows that $z\in \cald(A_0PA_0^{-1})$ and $A_0PA_0^{-1}z=(A_0PA_0^{-1})^*z$. Hence
$$\cald((A_0PA_0^{-1})^*)\subseteq \cald(A_0PA_0^{-1}) \quad \textrm{and} \quad A_0PA_0^{-1}=(A_0PA_0^{-1})^* \ \textrm{on} \ \cald((A_0PA_0^{-1})^*).$$
Conversely, we know from \eqref{eq:autoagg1} that
$$\cald(A_0PA_0^{-1})=A_0(D^P)\subseteq \cald((A_0PA_0^{-1})^*) \quad \textrm{and} \quad (A_0PA_0^{-1})^*=A_0PA_0^{-1} \ \textrm{on} \ \cald(A_0PA_0^{-1}).$$
In particular, by \eqref{eq:domdenso}, $Z$ coincides with $H$, i.e. both domains in \eqref{eq:domains} are dense in $H$. This proves our claim.\\[1mm]
Take now $x\in \cald(A_0)$. As, by \eqref{eq:domdenso}, $D(A^0) \subseteq \cald(A_0PA_0^{-1})$, we have
\begin{equation}\label{carattDP}
PA_0^{-1}\in \cald(A_0) \quad \forall x\in \cald(A_0), \quad \textrm{i.e.} \quad D^P=\cald(A_0).
\end{equation}
(see \eqref{eq:D2}). Moreover, by \eqref{eq:autoagg2} and by the above claim we deduce
$$A_0^{-1}PA_0x = A_0PA_0^{-1}x \qquad \forall x\in \cald(A_0).$$
Applying $A_0^{-1}$ we have $A_0^{-2}PA_0x = PA_0^{-1}x$ for every $x\in \cald(A_0)$, or, equivalently,
$$A_0^{-2}PA_0^2z = z \qquad \forall z\in \cald(A_0^2),\qquad \textrm{i.e.}
\qquad A_0^{-2}Pw = PA_0^{-2}w \qquad \forall w\in H.$$
This means that the bounded operators $A_0^{-2}$ and $P$ commute. Now, since $A_0^{-1}$ is a non-negative operator such that $(A_0^{-1})^2=A_0^{-2}$, by a well known result (see \cite[Theorem VI.9]{Reed-Simon}), $A_0^{-1}$ must commute with every bounded operator $B$ which commutes with $A_0^{-2}$, for instance $B=P$. So
$$A_0^{-1}Pw = PA_0^{-1}w \qquad \forall w\in H,
\qquad \textrm{i.e.} \qquad Pz = A_0^{-1}PA_0z \qquad \forall z\in \cald(A_0);$$
this implies that $P(\cald(A_0)) \subseteq \cald(A_0)$ and $A_0Pz=PA_0z$ for every $z\in \cald(A_0)$. Thus $P$ commutes with $A_0$, as required.
Moreover $P(\cald(A_0)) \subseteq \cald(A_0)$ implies $\cald(A_0)\subseteq D^P$.
The reverse inclusion immediately follows from the definition of $D^P$.
\end{proof}
We are now able to characterize all solutions of the ARE \eqref{eq:commutingA0}.
\begin{Theorem}\label{th:sol=proj}
Assume Hypothesis \ref{hp:comm} and let $P\in \call_+(H)$.
Then $P$ is a solution of (\ref{eq:commutingA0}) if and only if $P$ is an orthogonal projection in $H$ and it
commutes with $A_0$. In particular the identity $I_H$ is the maximal solution among all solutions of (\ref{eq:commutingA0}).
\end{Theorem}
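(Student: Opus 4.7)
The plan is to combine Theorem \ref{th:commuting} with a short algebraic manipulation of \eqref{eq:commutingA0}, reducing the equation to the idempotency condition $P^2=P$.

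First, I would invoke Theorem \ref{th:commuting}: any solution $P\in\call_+(H)$ of \eqref{eq:commutingA0} commutes with $A_0$ and $D^P=\cald(A_0)$. Thus for every $x,y\in\cald(A_0)$ we have $Px,Py\in\cald(A_0)$, with $A_0Px=PA_0x$ and $A_0Py=PA_0y$. Exploiting this commutation together with the selfadjointness of $P$ and of $A_0$ on $H$ (Lemma \ref{lm:QinftyA0}-(iii)), each of the three inner products appearing in \eqref{eq:commutingA0} can be rewritten in the form $\langle A_0x,\cdot\rangle_H$:
\begin{align*}
\langle Px,A_0y\rangle_H&=\langle A_0Px,y\rangle_H=\langle PA_0x,y\rangle_H=\langle A_0x,Py\rangle_H,\\
\langle A_0Px,Py\rangle_H&=\langle PA_0x,Py\rangle_H=\langle A_0x,P^2y\rangle_H.
\end{align*}
Substituting, equation \eqref{eq:commutingA0} collapses to
\[
2\langle A_0x,(P-P^2)y\rangle_H=0\qquad\forall\,x,y\in\cald(A_0).
\]

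Next, I would note that $A_0$ is injective: if $A_0\xi=0$ for some $\xi\in\cald(A_0)$, then $e^{tA_0}\xi=\xi$ for all $t\ge 0$; but since $e^{tA_0}=e^{tA}|_H$ and $\|e^{tA}\xi\|_X\le Me^{-\omega t}\|\xi\|_X\to 0$, we get $\xi=0$. As $A_0$ is selfadjoint on $H$, its range $\calr(A_0)$ is then dense, so the displayed identity forces $(P-P^2)y=0$ for every $y\in\cald(A_0)$, and by continuity and the density of $\cald(A_0)$ in $H$ we conclude $P^2=P$ on the whole of $H$. A selfadjoint, nonnegative idempotent is an orthogonal projection; this settles the ``only if'' direction.

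For the converse, if $P\in\call_+(H)$ is an orthogonal projection that commutes with $A_0$, then $D^P=\cald(A_0)$ is dense in $H$, and reading the identities above backwards (using $P^2=P$) shows that \eqref{eq:commutingA0} holds. Maximality is then immediate, because every orthogonal projection $P$ satisfies $0\le P\le I_H$ in the positive-operator order. The main work has already been carried out in Theorem \ref{th:commuting}: once the commutation with $A_0$ is available and the injectivity of $A_0$ has been observed, the reduction to $P^2-P=0$ is a short computation, and I do not expect any further obstacle.
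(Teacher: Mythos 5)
Your argument is correct and follows essentially the same route as the paper: both invoke Theorem \ref{th:commuting} to obtain the commutation $A_0P=PA_0$ on $\cald(A_0)$ and then reduce \eqref{eq:commutingA0} to the idempotency $P^2=P$ by cancelling $A_0$ (the paper via injectivity applied to $A_0P(I_H-P)x=0$, you via density of $\calr(A_0)$ in the pairing --- the same fact in dual form). The converse and the maximality statement are handled identically in both proofs.
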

\begin{proof}
Let $P$ be a solution of \eqref{eq:commutingA0}: by Theorem \ref{th:commuting} we have $Px\in \cald(A_0)$ for every $x\in \cald(A_0)$ and $A_0Px=PA_0x$. Hence the ARE \eqref{eq:prima}, equivalent to \eqref{eq:commutingA0}, becomes
$$0=-2\langle PA_0x,y \rangle_H + 2\langle PA_0 Px,y \rangle_H, \quad x\in D^P, \ y\in H.$$
Since $y$ is arbitrary, using \eqref{carattDP} we get $2PA_0x = 2PA_0Px$ for every $x\in \cald(A_0)$, and successively, for all $x\in D(A_0)$,
$PA_0x-PA_0Px=0$ , $PA_0(I_H-P)x=0$, $A_0P(I_H-P)x=0$, $P(I_H-P)x=0$, $Px=P^2x$; finally, by density, $P=P^2$.\\
Assume, conversely, that $P$ is an orthogonal projection in $H$
and it commutes with $A_0$. Then
$$PA_0Pz=P^2A_0z=PA_0z=A_0Pz \quad \forall z\in \cald(A_0),$$
and consequently $P$ solves \eqref{eq:prima}.
Finally, since $I_H$ solves \eqref{eq:commutingA0}, the last statement is immediate.
\end{proof}
We conclude this Section proving the equivalence of the two forms \eqref{eq:commutingX} and \eqref{eq:commutingA0} of the ARE.

\begin{Proposition}
  \label{pr:equivcomm}
Every solution of \eqref{eq:commutingX} is also a solution of \eqref{eq:commutingA0} and vice versa.
\end{Proposition}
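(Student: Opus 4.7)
The plan is to show that, under Hypothesis \ref{hp:comm}, the two equations \eqref{eq:commutingX} and \eqref{eq:commutingA0} have the same natural domain of definition, and that on this common domain they are literally inner-product rewritings of each other; both implications then follow at once.

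First I would verify that the two candidate domains coincide. Under Hypothesis \ref{hp:comm}, \cite[Proposition C.1-(v)]{AcquistapaceGozzi17} gives $Q_\infty=-\tfrac12 A^{-1}BB^*$, so, as already recalled in the section, $\calr(Q_\infty)=\cald(A)$. Combining this with the identification $\cald(A_0)=\cald(A)\cap H$ (from Lemma \ref{exptAHH}) and with the boundedness of $P$ on $H$, one obtains
$$\cald(\Lambda_P)=\{x\in H:\ Px\in \cald(A)\}=\{x\in H:\ Px\in \cald(A_0)\},$$
and therefore
$$\cald(A)\cap\cald(\Lambda_P)=\{x\in \cald(A_0):\ Px\in \cald(A_0)\}=D^P.$$
In particular, the density requirement in Definition \ref{df:solutionAREHX}(i) for \eqref{eq:commutingX} is literally the same condition as the density requirement in Definition \ref{df:solutionAREHcomm} for \eqref{eq:commutingA0}.

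Next I would rewrite the pointwise equation. The basic identity
$$\langle u,v\rangle_H=\langle u,Q_\infty^{-1}v\rangle_X\qquad \forall\, u\in H,\ v\in \calr(Q_\infty)\cap H$$
follows from the definition $\langle u,v\rangle_H=\langle Q_\infty^{-1/2}u,Q_\infty^{-1/2}v\rangle_X$ and the factorization $Q_\infty^{-1/2}v=Q_\infty^{1/2}Q_\infty^{-1}v$ on $\calr(Q_\infty)$, using that $u\in H\subseteq [\ker Q_\infty]^\perp$. For $x,y\in D^P$ one has $Px,Py\in \cald(A_0)\subseteq \calr(Q_\infty)\cap H$ and $A_0x=Ax\in H$, $A_0y=Ay\in H$, $A_0Px=APx\in H$; applying the identity to each of the three terms transforms
$$-\langle A_0x,Py\rangle_H-\langle Px,A_0y\rangle_H+2\langle A_0Px,Py\rangle_H$$
into
$$-\langle Ax,Q_\infty^{-1}Py\rangle_X-\langle Q_\infty^{-1}Px,Ay\rangle_X+2\langle APx,Q_\infty^{-1}Py\rangle_X,$$
which is exactly the right-hand side of \eqref{eq:commutingX}. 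Reversing each step gives the converse transformation.

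Combining these two points, a $P\in \Lc_+(H)$ satisfies the density and equation conditions of Definition \ref{df:solutionAREHX}(i) on $\cald(A)\cap\cald(\Lambda_P)$ if and only if it satisfies the density and equation conditions of Definition \ref{df:solutionAREHcomm} on $D^P$, proving the proposition in both directions. The only bookkeeping concerns the identification $\cald(A_0)=\cald(A)\cap H$ and the simultaneous well-definedness of $A_0x$, $A_0Px$, $Q_\infty^{-1}Px$, $Q_\infty^{-1}Py$ precisely when $(x,y)\in D^P\times D^P$; this is immediate from Hypothesis \ref{hp:comm} and the facts about $H$ already used earlier in the section, and so I do not expect any substantial obstacle in this argument.
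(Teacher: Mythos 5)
There is a genuine gap: the domain identification on which your whole argument rests is false. Lemma \ref{exptAHH}-(iii) does \emph{not} say $\cald(A_0)=\cald(A)\cap H$; it says $\cald(A_0)=\{x\in\cald(A)\cap H:\ Ax\in H\}$, with the extra constraint $Ax\in H$. Under Hypothesis \ref{hp:comm} one has $Q_\infty=-\tfrac12A^{-1}BB^*$ with $BB^*$ a bounded coercive isomorphism commuting with $A$, so $H=\calr(Q_\infty^{1/2})=\cald((-A)^{1/2})$; hence $\cald(A)\cap H=\cald(A)$ while $\cald(A_0)=\cald((-A)^{3/2})$, a strictly smaller set whenever $A$ is unbounded. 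Consequently
$$D^P=\{x\in\cald(A_0):Px\in\cald(A_0)\}\ \subsetneq\ \{x\in\cald(A)\cap H:\ Px\in\calr(Q_\infty)\}=\cald(A)\cap\cald(\Lambda_P)$$
in general, and the two definitions of solution are \emph{not} requirements on the same set. Your inner-product rewriting of the three terms is fine on $D^P$, where everything lands in $H$, but it only yields \eqref{eq:commutingX} for $x,y\in D^P$, whereas Definition \ref{df:solutionAREHX}-(i) demands the equation on all of $\cald(A)\cap\cald(\Lambda_P)$. This is exactly where the ``vice versa'' direction requires real work.

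The paper closes this gap as follows: it first invokes Theorem \ref{th:commuting} to identify $D^P=\cald(A_0)$ for any solution of \eqref{eq:commutingA0}, then shows that $D^P$ is dense in $\cald(A)\cap\cald(\Lambda_P)$ for the graph norm $\|\cdot\|_H+\|A\cdot\|_X+\|AP\cdot\|_X$ by means of the resolvent approximations $x_n=nR(n,A_0)x$ (which requires $P$ to commute with the resolvent, again from Theorem \ref{th:commuting}), and finally passes to the limit in each term of \eqref{eq:commutingX} using the explicit formula $Q_\infty^{-1}=-2(BB^*)^{-1}A$ to control $Q_\infty^{-1}Px_n\to Q_\infty^{-1}Px$ in $X$. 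Without this approximation step (or some substitute for it) your argument does not prove the converse implication; the forward implication, which only needs the restriction from the larger set to the smaller one, is the part that is essentially immediate.
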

\begin{proof}
We have already seen that every solution of \eqref{eq:commutingX} is also a solution of \eqref{eq:commutingA0}.\\
Consider now a solution $P$ of \eqref{eq:commutingA0}. First of all, if $x,y\in D^P=\cald(A_0)$, equation \eqref{eq:commutingA0} transforms into \eqref{eq:commutingX}, so that \eqref{eq:commutingX} holds true for $x,y\in D^P$.\\
We claim that $D^P$ is dense in $\cald(A)\cap \cald(\Lambda_P)$ (see \eqref{eq:D2}) with respect to the norm $\|\cdot\|_H +\|A\cdot\|_X +\|AP\cdot \|_X$.
Indeed, for $z\in \cald(A)\cap \cald(\Lambda_P)$, recalling Lemma \ref{exptAHH}, we set
$$z_n = nR(n,A)z= nR(n,A)|_H\,z = nR(n,A_0)z.$$
Then $z_n\in \cald(A_0)=D^P$ and, as $n\to \infty$,
$$\begin{array}{c} z_n \to z \quad \textrm{in } H, \\[2mm]
A_0z_n = nA_0R(n,A_0)z = nAR(n,A)z \to Az \quad\textrm{in } X,\\[2mm]
A_0Pz_n = nA_0 PR(n,A_0)z = nAR(n,A)Pz \to APz \quad \textrm{in } X;
\end{array}$$
this proves our claim.\\
Let now $x,y\in \cald(A)\cap \cald(\Lambda_P)$; select $\{x_n\}, \{y_n\}\subseteq \cald(A_0)$ such that, as $n\to \infty$,
$$\begin{array}{l} x_n\to x\ \textrm{in }H, \quad Ax_n \to Ax \ \textrm{in }X, \quad APx_n \to APx \ \textrm{in }X,\\[2mm]
y_n\to y\ \textrm{in }H, \quad Ay_n \to Ay \ \textrm{in }X, \quad APy_n \to APy \ \textrm{in }X.\end{array}$$
As a consequence,
$$Q_\infty^{-1}Px_n = -2BB^*APx_n \to -2BB^*APx= Q_\infty^{-1}Px \ \textrm{in } X \ \textrm{as } n\to \infty,$$
and similarly $Q_\infty^{-1}Px_n \to Q_\infty^{-1}Py$ in $X$ as $n\to \infty$.
For $x_n$ and $y_n$, \eqref{eq:commutingX} holds:
$$0=-\< Ax_n,Q_\infty^{-1} Py_n \>_X
-\< Q_\infty^{-1} Px_n,Ay_n \>_X
+2 \< APx_n,Q_\infty^{-1}Py_n \>_X.$$
In all terms, by what established above, we can pass to the limit as $n\to \infty$, obtaining
$$0=-\< Ax,Q_\infty^{-1} Py \>_X
-\< Q_\infty^{-1} Px,Ay \>_X
+2 \< APx,Q_\infty^{-1}Py \>_X \qquad \forall x,y \in \cald(A) \cap \cald(\Lambda_P),$$
i.e. $P$ solves \eqref{eq:commutingX}.
\end{proof}
\begin{Remark}\label{rm:max} {\em It is easy to verify that for every solution $P$ of \eqref{eq:commutingA0} the space $D^P=\cald(A_0)$ is dense in $\cald(A)\cap H$ with respect to the norm $\|\cdot\|_H + \|A\cdot\|_X$: it suffices to repeat the argument above, i.e. to consider, for fixed $x \in \cald(A)\cap H$, the approximation $x_n = nR(n,A_0)x$, observing that $x_n\to x$ in $H$ and $A_0x_n =Ax_n \to Ax$ in $X$. Thus, $P$ belongs to the class $\calq$ introduced in Definition \ref{hp:hplemmachiave}, and consequently, by Theorem \ref{th:maximalARE}, we have $P\le I_H$. Of course, this follows as well by Theorem \ref{th:sol=proj}.}
\end{Remark}


\begin{Corollary}
Assume that $A_0$ is a diagonal operator with respect to an orthonormal complete system $\{e_n\} $ in $H$ with sequence of eigenvalues $\{\lambda_n\}\subset \,]-\infty, 0[\,$. Let $P$ be a solution of the ARE (\ref{eq:commutingA0}).
Then
\begin{itemize}
  \item[(i)] every eigenspace of $A_0$ is invariant for $P$;
  \item[(ii)] if all eigenvalues are simple, then $P$ is diagonal
  with respect to the system $\{e_n\} $, too;
  \item[(iii)] if at least one eigenspace $M$ has dimension $m\ge 2$, then
  the restriction of $P$ to $M$ needs not be diagonal: for instance, if
$m=2$ a non-diagonal $P$ on $M$ must have the following explicit form:
\begin{equation}\label{eq:exP}
    \left(
      \begin{array}{cc}
        a &  \pm \sqrt{a(1-a)}\\
        \pm \sqrt{a(1-a)} & 1-a \\
      \end{array}
    \right) \qquad \textrm{for some }\ a \in \,]0,1[\,.
\end{equation}
\end{itemize}
\end{Corollary}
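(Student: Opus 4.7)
The entire corollary is essentially a linear algebraic consequence of Theorem~\ref{th:sol=proj}, which tells us that any solution $P$ of \eqref{eq:commutingA0} is an orthogonal projection in $H$ that commutes with $A_0$. So the plan is to extract everything from these two facts, together with the spectral decomposition of $A_0$, without touching the ARE directly again.

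For (i), I would start from the commutation relation $A_0 P = P A_0$ on $\cald(A_0)$. Since each $e_n \in \cald(A_0)$ (it is an eigenvector), we get $A_0(Pe_n) = P(A_0 e_n) = \lambda_n Pe_n$, so $Pe_n$ lies in the $\lambda_n$-eigenspace $M_n$ of $A_0$ (possibly zero). Because $\{e_n\}$ is complete and $P$ is bounded, this extends by linearity and continuity: $P(M_n)\subseteq M_n$ for every $n$. Since distinct eigenspaces are orthogonal and $P$ is selfadjoint, the orthogonal decomposition $H=\bigoplus_n M_n$ is preserved by $P$.

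For (ii), when every eigenvalue is simple we have $\dim M_n=1$, so $Pe_n=p_n e_n$ for some $p_n\in\R$. Using $P=P^2$ forces $p_n\in\{0,1\}$, hence $P$ is diagonal in the basis $\{e_n\}$. For (iii), fix an eigenspace $M$ with $\dim M=m\ge 2$ and consider the restriction $P_M:=P|_M$. From (i) it is an orthogonal projection on the finite-dimensional Euclidean space $M$, so any $m\times m$ symmetric matrix with $P_M^2=P_M$ is admissible, and such matrices need not be diagonal in any fixed basis of $M$.

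The concrete $m=2$ classification is then a direct computation that I would carry out explicitly: writing
\[
P_M=\begin{pmatrix} a & b \\ b & d\end{pmatrix},
\]
the relations $P_M^2=P_M$ give $a^{2}+b^{2}=a$, $b(a+d-1)=0$ and $b^{2}+d^{2}=d$. If $b=0$ one recovers a diagonal projection; if $b\neq 0$ then $d=1-a$ and $b^{2}=a(1-a)$, which forces $a\in\,]0,1[\,$ and $b=\pm\sqrt{a(1-a)}$, exactly the form \eqref{eq:exP}. The first equation is then automatically satisfied, so any such matrix truly occurs.

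There is no genuine obstacle here: the delicate input (the hard-won fact that every solution of the ARE is a projection commuting with $A_0$) has already been packaged in Theorem~\ref{th:sol=proj}, and what remains is structural. The only mildly subtle point to check is that the orthogonal projection property really transfers to the restriction $P_M$ (which it does: selfadjointness passes to invariant subspaces, and idempotence is a pointwise property), and that in case (iii) one should emphasize non-uniqueness by exhibiting a one-parameter family, which the formula \eqref{eq:exP} indeed provides.
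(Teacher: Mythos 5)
Your argument is correct and follows essentially the same route as the paper: part (i) from the commutation $A_0P=PA_0$ applied to eigenvectors, part (ii) from simplicity of the eigenvalue together with $P=P^2$, and part (iii) by the elementary classification of $2\times 2$ symmetric idempotents (a computation the paper states as ``straightforward'' and omits, but which you carry out correctly). The only detail worth making explicit is that the existence of such non-diagonal solutions uses the converse direction of Theorem \ref{th:sol=proj}: any orthogonal projection acting as \eqref{eq:exP} on $M$ and diagonally elsewhere commutes with $A_0$ and is therefore genuinely a solution of the ARE.
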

\begin{proof}
To prove (i) it is enough to show that, for every eigenvalue $\lambda$ of $A_0$
and $x$ eigenvector of $A_0$ associated to $\lambda$, we have
$\lambda Px= A_0Px$. This is immediate since $A_0$ and $P$ commute.

Concerning (ii) we observe that, for every $n \in \N$
we have $A_0e_n=\lambda_n e_n$, so that $\lambda_n Pe_n=A_0Pe_n$.
Since $\lambda_n$ is simple, it is $Pe_n=ke_n$ for some $k\in \R$.
Since $P$ is a projection, it must be $k=0$ or $k=1$.

Finally (iii) can be proved with straightforward algebraic calculations,
using the fact that $M$ is invariant under $P$ and that $P$ is a projection.
\end{proof}

\begin{Remark}
{\rm Let $A$ be a diagonal operator with respect to an orthonormal complete system $\{e_n\} $ in $H$ with sequence of eigenvalues $\{\lambda_n\}\subset \,]-\infty, 0[\,$, where all $\lambda_n$ are distinct and simple. Then $BB^*$ must be diagonal, too. Indeed we have, for every $n \in \N$,
$$\sum_{k=0}^{+\infty}\<BB^*e_n,e_k\>_H\,e_k = BB^*e_n = \frac{1}{\lambda_n}BB^*Ae_n = \frac{1}{\lambda_n}ABB^*e_n = \frac{1}{\lambda_n}\sum_{k=0}^{+\infty} \lambda_k\<BB^*e_n,e_k\>_H\,e_k\,,$$
which implies
$$\<BB^*e_n,e_k\>_H\left(1-\frac{\lambda_k}{\lambda_n}\right)=0 \quad \forall k,n \in \N.$$
Since all eigenvalues are distinct, it must be $BB^*e_n=b_n e_n$ for all $ n \in \N$ for a suitable sequence $\{b_n\}\in \ell^\infty$.
This implies that $Q_\infty$ and $Q_\infty^{1/2}$ are diagonal with respect to $\{e_n\}$, too. Following \cite[Subsection 5.2]{AcquistapaceGozzi17}
we may also consider the case when $BB^*$ is unbounded and characterize the space $H$, for specific choices of $BB^*$, in terms of the domain of suitable powers of $(-A)$. In Section \ref{PHYSICS} we will consider a specific diagonal case arising in mathematical physics.}
\hfill\qedo
\end{Remark}
\section{A motivating example: from equilibrium to non-equilibrium states}\label{PHYSICS}
In this section we describe, in a simple one-dimensional case, the optimal control
problem outlined in the papers \cite{BDGJL1,BDGJL2,BDGJL3,BDGJL4,BDGJL5,BertiniGabrielliLebowitz05}. Such special case fits into the application studied e.g. in \cite{BDGJL4,BertiniGabrielliLebowitz05}, in the case of the Landau-Ginzburg model.

We consider a controlled dynamical system whose state
variable is described by a function
$\rho :\left] -\infty ,0\right]$
(the choice of the letter $\rho$ comes from the fact that in many physical models $\rho $ is a density).
The control variable is a function
$F :\left]-\infty ,0\right] \times \left[ 0,1\right] \rightarrow \mathbb{R}$ which we assume to belong to
$L^{2}\left( -\infty ,0;L^{2}(0,1) \right)$.
The state equation is formally given by
\begin{equation}
\label{eq:statePDE}
\begin{cases}
\frac{\partial \rho }{\partial t}\left(t,x\right)
=\frac{1}{2}
\frac{\partial^2 \rho }{\partial x^2}\left(t,x\right)
 +\nabla F \left( t,x\right), & t\in\,]-\infty, 0[\,,\ x\in\,]0,1[\,,\\[1mm]
\rho \left( -\infty,x \right) =\bar{\rho}(x), & x\in[0,1],\\
\rho \left( t,0\right) =\rho _{-},\qquad \rho \left( t,1\right)
=\rho _{+},& t\in\,]-\infty, 0[\,,\\
\rho \left( 0,x\right) =\rho _{0}\left( x\right), & x\in[0,1],
\end{cases}
\end{equation}
where
$\rho _{+}, \rho _{-}\in \left( 0,1\right)$, and $\bar{\rho}$ is an equilibrium state for the uncontrolled
problem. Hence $\bar{\rho}$ is the unique solution of the following system
\[
\left \{
\begin{array}{l}
v'' \left( x\right)  =0, \\
v \left( 0\right) =\rho _{-}, \\
v \left( 1\right)=\rho _{+};
\end{array}
\right .
\]
so we have $\bar \rho(x)=(\rho_+-\rho_-)x+\rho_-$.

For any datum $\rho_0\in L^2(0,1)$ we consider any control driving (in equation \eqref{eq:statePDE}) the equilibrium state $\bar\rho$ (at time $t=-\infty$) to $\rho_0$ at time $t=0$.
Then we consider the problem of minimizing, over the set of such controls, the energy functional
\begin{equation*}
J^0_\infty\left( F \right) =
\frac{1}{2}\int_{-\infty}^{0}\|F (s)\|^2_{L^2(0,1)}\, \ud s.
\end{equation*}
Given the above structure it is natural to consider the new control
$$
\nu=\nabla F\in L^2\left(-\infty,0;H^{-1}(0,1)\right)
$$
and take both the state space $X$ and the control space $U$ equal to
$H^{-1}\left(0,1\right)$.
We now rewrite (\ref{eq:statePDE}) in our abstract setting
as follows.
First we denote by $A$ the Laplace operator in the space $H^{-1}(0,1)$ with Dirichlet boundary conditions, i.e.
$$\cald\left( A\right)  =H^1_0\left( 0,1\right),
\qquad A\eta =\eta'' \quad \forall \eta\in H^1_0\left( 0,1\right).$$
%
Hence, formally, the state equation \eqref{eq:statePDE} becomes
\begin{equation}
\label{eq:stateinfdim-prima}
\left \{
\begin{array}{l}
\rho ^{\prime }(t) =A[\rho (t) -\bar{\rho}] +\nu (t), \quad t<0, \\[1mm]
\rho( -\infty) =\bar{\rho}.
\end{array}
\right.
\end{equation}
Using a standard argument
(see e.g. \cite[Appendix C]{FabbriGozziSwiech17}),
the state equation (\ref{eq:statePDE}) can be rewritten in the space $X$ and in the new variable $y(t):=\rho(t)-\bar \rho$ as
\begin{equation}
\label{eq:stateinfdim-seconda}
\left \{
\begin{array}{l}
 y^{\prime }(t) =Ay(t) +\nu (t), \quad t<0, \\[1mm]
y( -\infty) =0.
\end{array}
\right.
\end{equation}
The function
\begin{equation}
\label{eq:mildz}
y( t;-\infty,0,\nu) =\int_{-\infty }^{t}e^{(t-s)A}\nu(s)\, \ud s, \qquad t \le 0,
\end{equation}
corresponding to
$\rho (t;\nu) = \bar \rho +\int_{-\infty }^{t}e^{(t-s)A}\nu(s)\, \ud s$,
is the unique solution of (\ref{eq:stateinfdim-seconda}),
adopting Definition \ref{def:delsolstate-new} and applying Lemma \ref{lm:existencesol-new}.

The energy functional, in the new control variable $\nu$, becomes
\begin{equation*}
\bar J^0_\infty\left( \nu \right)
=\frac{1}{2}\int_{-\infty}^{0}\| A^{-1/2} \nu (s)\|^2_{L^2(0,1)}\, \ud s
=\frac{1}{2}\int_{-\infty}^{0}\|\nu (s)\|^2_{H^{-1}(0,1)}\, \ud s.
\end{equation*}
The set of admissible controls here is exactly $\mathcal{U}_{[-\infty,0]}(0,y_0)$ (see Subsection \ref{SSE:introinfhor}), which is nonempty if and only if $y_0\in H:=R(Q_\infty^{1/2})=D(A^{1/2})=L^2(0,1)$
(see e.g. \cite[Section 5.2]{AcquistapaceGozzi17}).
The value function $V_\infty$ is defined as
\begin{equation}
V_\infty\left(y_{0}\right) :=
\inf_{\nu \in \mathcal{U}_{[-\infty,0]}(0,y_0)}
\bar J^0_\infty\left( \nu \right).  \label{eq:defvf}
\end{equation}
%
%
%
%
%
%
Now, recalling that $X=U=H^{-1}(0,1)$ and setting
$B=I_{H^{-1}(0,1)}\in \call(U,X)$,
this problem belongs to the class of
the minimum energy problems studied in this paper.
We know, from Proposition \ref{pr:VlimitVt}, that the value
function is given by
$$
V_{\infty}(y_0) =
\frac 12\| y_0\|^2_{L^2(0,1)}.
$$
We can now apply Theorem \ref{th:maximalARE}, obtaining that:
\begin{itemize}
  \item the identity in $L^2$, $I_{L^2(0,1)}$, solves the ARE \eqref{eq:algricc} where we replace $B$ and $B^*$ by $I_{H^{-1}(0,1)}$;
  \item the operator $Q_\infty^{-1}=2A$ solves the ARE \eqref{eq:algriccX} where we replace $B^*$ by $I_{H^{-1}(0,1)}$;
  \item $I_{L^{2}(0,1)}$ is the maximal solution of the ARE \eqref{eq:algricc} among those in the class $\mathcal{Q}$ introduced in Definition \ref{hp:hplemmachiave}.
\end{itemize}

Moreover, here Hypothesis \ref{hp:comm} holds; hence we can apply
Theorem \ref{th:sol=proj}. Then, noting that $A_0$ is the Laplace operator with Dirichlet boundary conditions in the space $H=L^2(0,1)$, whose domain is $H^2(0,1) \cap H^1_0(0,1)$, we obtaing that:
\begin{itemize}
  \item the identity in $L^2$, $I_{L^2(0,1)}$, is a solution of the two (equivalent) AREs \eqref{eq:commutingX} and \eqref{eq:commutingA0};
  \item the set of all solutions of \eqref{eq:commutingX} and \eqref{eq:commutingA0} consists of all orthogonal projections $P$ which commute with $A_0$, i.e. all projections whose image is generated by a subset of the eigenvectors of $A_0$;
  \item $I_{L^{2}(0,1)}$ is the maximal solution among all solutions of \eqref{eq:commutingX} and \eqref{eq:commutingA0}.
\end{itemize}

\section*{Appendix}
\appendix

\section{Minimum Energy with finite horizon}
\label{sub:finitehorizon}

This part of the Appendix is devoted to recall
the formulation of the finite horizon minimum energy problem studied in \cite{AcquistapaceGozzi17} (briefly described at the beginning of Subsection \ref{SSE:METHOD}) and
to provide some related results which are useful in treating the infinite horizon problem (\ref{storinf})--(\ref{fzorinf}). Throughout this section we will assume that Hypothesis \ref{hp:main} holds without repeating it.

\subsection{General formulation of the problem}
\label{SE:MAINPROBLEM}

We take the Hilbert spaces $X$ (state space) and $U$ (control space), as well as the operators $A$ and $B$, as in Hypothesis \ref{hp:main}.
Given a time interval $[s,t]\subset \R$, an initial state $z\in X$ and a control $u\in L^2(s,t;U)$ we consider the
state equation \eqref{eq:state-fin-new}, which we rewrite here:
\begin{equation}
\label{eq:state-new}
\left\{
\begin{array}{l}
y'(r)=Ay(r)+Bu(r), \quad r\in \,]s,t], \\[1mm]
y(s) = z.
\end{array}
\right.
\end{equation}
Denote by $y(\cdot; s,z,u)$ the mild solution of
(\ref{eq:state-new}) (see Proposition \ref{prop:solcontinua}):
\begin{equation}
\label{eq:mild-state-new} y(r;s,z,u) := e^{(r-s)A}z + \int_s^{r}
e^{(r-\tau)A} B u(\tau)\, \ud \tau, \qquad r\in [s,t].
\end{equation}
We define the class of controls $u(\cdot)$ bringing the state $y(\cdot)$ from a
fixed $z\in X$ at time $s$ to a given target $x\in X$ at time $t$:
\begin{equation}
\label{eq:contr-x-x0} {\cal U}_{[s,t]}(z,x) \nd \left\{ u\in
L^2(s,t;U) \; : \; y(t;s,z,u)=x \right \}.
\end{equation}
Consider the quadratic functional (the energy)
\begin{equation}\label{eq:energyfunctional}
J_{[s,t]}(u) = \frac12 \int_s^t \|u(r)\|_U^2\, \ud r.
\end{equation}
The minimum energy problem at $(s,t;z,x)$ is the problem of
minimizing the functional $J_{[s,t]}(u)$ over all $u \in {\cal
U}_{[s,t]}(z,x)$. The value function of this control problem (the
{\em minimum energy}) is
\begin{equation} \label{eq:valuefunction-new} V_1(s,t;z,x)\nd
\inf_{u\in {\cal U}_{[s,t]}(z,x)} J_{[s,t]}(u).
\end{equation}
with the agreement that the infimum over the emptyset is $+\infty$.
Similarly to what we did in Proposition \ref{pr:Ubarnonempty}, given any $z\in X$ we define the {\em reachable set} in the interval
$[s,t]$, starting from $z$, as
\begin{equation}\label{eq:ptiragg-new}
{\mathbf R}_{[s,t]}^z:= \left\{ x \in X:\ {\cal U}_{[s,t]}(z,x) \neq \emptyset \right\}.
\end{equation}
Defining the operator
\begin{equation}\label{eq:newLst}
{\cal L}_{s,t}:L^2(s,t;U) \to X, \qquad {\cal L}_{s,t}u=\int_s^{t}
e^{(t-\tau)A} B u(\tau)\, \ud \tau,
\end{equation}
it is clear that
\begin{equation}\label{eq:ptiragg-newbis}
{\mathbf R}_{[s,t]}^z:= e^{(t-s)A}z+{\cal L}_{s,t}\left(L^2(s,t;U)\right).
\end{equation}
The use of \cite[Proposition 2.6]{AcquistapaceGozzi17} allows to reduce the number of variables from 4 to 2. In particular
\begin{equation}\label{eq:V1V1}
V_1(s,t;z,x)=V_1(s-t,0;0,x-e^{(t-s)A}z) =V_1(0,t-s;0,x-e^{(t-s)A}z);
\end{equation}

Hence from now on we set, for simplicity of notation,
\begin{equation}\label{eq:defV0-new}
V(t,x) := V_1(-t,0;0,x) = \inf_{u\in {\cal U}_{[-t,0]}(0,x)}
J_{[-t,0]}(u) \qquad t\in \, ]0,+\infty[,\quad x\in X.
\end{equation}

\subsection{The space $H$ and its properties}
\label{SSE:SPACEH}

In this subsection we provide some useful properties of the space $H$ introduced in Subsection \ref{SSE:METHOD} (see \eqref{eq:defH}-\eqref{eq:innerproductH}).
First recall that
\begin{equation}\label{spH}
H=\calr(Q_\infty^{1/2}) \qquad \hbox{and} \qquad
\langle x, y\rangle_H = \langle Q_\infty^{-1/2}x,Q_\infty^{-1/2}y \rangle_X \,,\quad x,y\in H,
\end{equation}
and that (with, in general, proper inclusion)
$$H \subseteq \overline{{\cal R}(Q_\infty^{1/2})}= [\ker Q_\infty^{1/2}]^\perp=[\ker Q_\infty]^\perp.$$
Next Lemmas \ref{HH} and \ref{dens} are exactly Lemmas
4.2 and 4.3 of \cite{AcquistapaceGozzi17}.
%
\begin{Lemma}\label{HH}
\begin{itemize}
  \item[]
  \item[(i)] The space $H$ is a Hilbert space continuously embedded into $X$.

  \item[(ii)] The space ${\cal R}(Q_\infty)$ is dense in $H$.

  \item[(iii)] The operator $Q_\infty^{-1/2}$ is an isometric isomorphism from $H$ to $[\ker Q_\infty^{1/2}]^\perp$, and in particular
\begin{equation}\label{qinfty2}
\|Q_\infty^{-1/2}x\|_X = \|x\|_H \qquad \forall x\in H.
\end{equation}
  \item[(iv)]
We have $Q_\infty^{1/2}\in \Lc(H)$ and
$$
\|Q_\infty^{1/2}\|_{\Lc(X)}=\|Q_\infty^{1/2}\|_{\Lc(H)}.
$$
  \item[(v)] For every $F \in \Lc(X)$ such that ${\cal R}(F) \subseteq H$ we have $Q_\infty^{-1/2}F \in \Lc (X)$, so that $F\in \Lc(X,H)$.
\end{itemize}
\end{Lemma}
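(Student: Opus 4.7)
The plan is to exploit the fact that $Q_\infty^{-1/2}$, defined as the Moore--Penrose pseudoinverse of the selfadjoint non-negative operator $Q_\infty^{1/2}\in\call(X)$, is by construction an algebraic bijection from $H=\calr(Q_\infty^{1/2})$ onto $[\ker Q_\infty^{1/2}]^\perp$, and the inner product declared on $H$ is precisely the one that makes this bijection an isometry. Once this linchpin is in place, all five statements follow by transporting facts between the closed subspace $[\ker Q_\infty^{1/2}]^\perp$ of $X$ and $H$. A repeatedly used standard fact is that for a non-negative selfadjoint $T\in\call(X)$ one has $\ker T=\ker T^{1/2}$ and $\overline{\calr(T^{1/2})}=[\ker T^{1/2}]^\perp$.

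For part (i), since $[\ker Q_\infty^{1/2}]^\perp$ is a closed subspace of the Hilbert space $X$ and $Q_\infty^{-1/2}\colon H\to[\ker Q_\infty^{1/2}]^\perp$ is a bijective isometry by definition of the $H$-inner product, $H$ inherits the Hilbert space structure. For the continuous embedding, given $x\in H$ write $\xi=Q_\infty^{-1/2}x\in[\ker Q_\infty^{1/2}]^\perp$; then $x=Q_\infty^{1/2}\xi$, so $\|x\|_X\le\|Q_\infty^{1/2}\|_{\call(X)}\|\xi\|_X=\|Q_\infty^{1/2}\|_{\call(X)}\|x\|_H$. Part (iii) is essentially the definition, plus the observation that surjectivity onto $[\ker Q_\infty^{1/2}]^\perp$ holds because for every such $\xi$ the element $Q_\infty^{1/2}\xi$ lies in $H$ and $Q_\infty^{-1/2}(Q_\infty^{1/2}\xi)=\xi$. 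For (ii), given $x=Q_\infty^{1/2}\xi\in H$, I would pick $\eta_n\in X$ with $Q_\infty^{1/2}\eta_n\to\xi$ in $X$ (possible since $\xi\in\overline{\calr(Q_\infty^{1/2})}$); replacing $\eta_n$ by its projection onto $[\ker Q_\infty^{1/2}]^\perp$ if needed, set $x_n=Q_\infty\eta_n\in\calr(Q_\infty)$. A direct calculation using the defining isometry gives $\|x_n-x\|_H=\|Q_\infty^{1/2}\eta_n-\xi\|_X\to 0$.

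For (iv), the inclusion $H\subseteq[\ker Q_\infty]^\perp=[\ker Q_\infty^{1/2}]^\perp$ implies that for any $x\in H$ the vector $Q_\infty^{1/2}x$ lies in $H=\calr(Q_\infty^{1/2})$ and $Q_\infty^{-1/2}Q_\infty^{1/2}x=x$, whence $\|Q_\infty^{1/2}x\|_H=\|x\|_X$. Parameterizing $x\in H$ with $\|x\|_H=1$ as $x=Q_\infty^{1/2}\xi$ with $\xi\in[\ker Q_\infty^{1/2}]^\perp$ and $\|\xi\|_X=1$ rewrites the operator norm on $H$ as the supremum of $\|Q_\infty^{1/2}\xi\|_X$ over such $\xi$; since $Q_\infty^{1/2}$ annihilates $\ker Q_\infty^{1/2}$, this equals $\|Q_\infty^{1/2}\|_{\call(X)}$. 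For (v), I would first show that $F\in\call(X,H)$ by the closed graph theorem: if $x_n\to x$ in $X$ and $Fx_n\to y$ in $H$, the continuous embedding from (i) gives $Fx_n\to y$ in $X$ while the $X$-continuity of $F$ gives $Fx_n\to Fx$ in $X$, forcing $y=Fx$. Composing with the bounded operator $Q_\infty^{-1/2}\colon H\to X$ from (iii) yields $Q_\infty^{-1/2}F\in\call(X)$.

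The main technical obstacle is the careful bookkeeping around $\ker Q_\infty^{1/2}$. Every time a composition of $Q_\infty^{1/2}$ and $Q_\infty^{-1/2}$ is simplified, one must verify that the intermediate vector lies in $[\ker Q_\infty^{1/2}]^\perp$; the identity $Q_\infty^{-1/2}Q_\infty^{1/2}x=x$ used in (iv) relies precisely on the fact that elements of $H$ are automatically orthogonal to $\ker Q_\infty$. Once this is set up correctly, the rest is a routine chain of Hilbert space arguments.
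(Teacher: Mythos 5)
Your argument is correct: the paper itself does not prove Lemma \ref{HH} but quotes it from the companion paper \cite{AcquistapaceGozzi17}, and your proof is the standard one — namely, that the pseudoinverse $Q_\infty^{-1/2}$ is by construction an isometric bijection of $H$ onto the closed subspace $[\ker Q_\infty^{1/2}]^\perp$, from which (i)--(v) follow by transport of structure, the closed graph theorem for (v), and the identity $Q_\infty^{-1/2}Q_\infty^{1/2}x=x$ for $x\in H\subseteq[\ker Q_\infty^{1/2}]^\perp$. All the kernel bookkeeping you flag is handled correctly, so nothing further is needed.
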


\begin{Lemma}\label{dens} For $0<t\le +\infty$ let $Q_t$ be the operator defined by (\ref{eq:contropintro}). Then, for every $t\in [T_0, + \infty]$, the space $Q_t({\cal D}(A^*))$ is dense in $H$ and contained in ${\cal D}(A)$.
In particular ${\cal D}(A)\cap H$ is dense in $H$.
\end{Lemma}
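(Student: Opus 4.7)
My plan is to establish the two assertions of the lemma separately---first the containment $Q_t(\cald(A^*))\subseteq\cald(A)$, then the density of $Q_t(\cald(A^*))$ in $H$---and read off the ``In particular'' statement as a corollary; throughout I fix $t\in[T_0,+\infty]$.

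For the containment, in the case $t=+\infty$ I would invoke the Lyapunov equation satisfied by $Q_\infty$ (\cite[Proposition 3.3]{AcquistapaceGozzi17}): for any $x\in\cald(A^*)$ one has $AQ_\infty x = -Q_\infty A^*x - BB^*x\in X$, so $Q_\infty x\in\cald(A)$. For $t<+\infty$, I would split the integral defining $Q_\infty$ at $t$ and shift the tail, obtaining
\[
Q_t x \;=\; Q_\infty x - \int_t^{+\infty} e^{rA}BB^*e^{rA^*}x\,\ud r \;=\; Q_\infty x - e^{tA}\,Q_\infty\, e^{tA^*}x.
\]
Since $e^{tA^*}$ preserves $\cald(A^*)$, the $t=+\infty$ case yields $Q_\infty e^{tA^*}x\in\cald(A)$; since $e^{tA}$ preserves $\cald(A)$, we conclude $Q_tx\in\cald(A)$.

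For the density, the idea is to replace the $H$-inner product by the equivalent one induced by $Q_t^{1/2}$ and to use the standard factorization of $Q_t$. Under Hypothesis \ref{NC}, $\calr(Q_t^{1/2})=\calr(Q_\infty^{1/2})=H$ for every $t\ge T_0$, and Douglas' factorization theorem, applied in both directions, provides bounded operators $C_t,D_t\in\call(X)$ with $Q_t^{1/2}=Q_\infty^{1/2}C_t$ and $Q_\infty^{1/2}=Q_t^{1/2}D_t$; this implies that the two complete Hilbert norms $\|Q_\infty^{-1/2}\cdot\|_X$ and $\|Q_t^{-1/2}\cdot\|_X$ on $H$ are equivalent. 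Since density is a topological notion, it suffices to show density of $Q_t(\cald(A^*))$ in $H$ with respect to the $Q_t$-norm. The map $Q_t^{1/2}:[\ker Q_t]^\perp\to (H,\|\cdot\|_{Q_t})$ being an isometric isomorphism, and $Q_t(\cald(A^*))=Q_t^{1/2}\bigl(Q_t^{1/2}(\cald(A^*))\bigr)$, the problem reduces to the $X$-density of $Q_t^{1/2}(\cald(A^*))$ in $[\ker Q_t]^\perp$. Now $\cald(A^*)$ is $X$-dense in $X$ (standard for generators of $C_0$-semigroups on Hilbert spaces), $Q_t^{1/2}$ is bounded, and $\calr(Q_t^{1/2})$ is $X$-dense in $[\ker Q_t^{1/2}]^\perp=[\ker Q_t]^\perp$ by selfadjointness; combining these via a diagonal extraction yields the claim.

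The ``In particular'' statement then follows at once: $Q_t(\cald(A^*))\subseteq\cald(A)\cap H$ is dense in $H$, hence so is $\cald(A)\cap H$. I expect the main obstacle to lie in the equivalence of the two norms on $H$: this is precisely where Hypothesis \ref{NC} enters essentially, for without the equality $\calr(Q_t^{1/2})=\calr(Q_\infty^{1/2})$ only one of the two Douglas factorizations exists, the identity map between the two topologies on $H$ is continuous in only one direction, and density cannot be transferred from the $Q_t$-topology to the $H$-topology.
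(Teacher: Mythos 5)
Your proof is correct. Note that this paper gives no proof of Lemma \ref{dens} at all --- it is imported verbatim from Lemmas 4.2--4.3 of \cite{AcquistapaceGozzi17} --- so there is no in-text argument to compare against; judged on its own, your argument is sound and self-contained modulo the cited Lyapunov identity. The containment part is clean: the identity $Q_t=Q_\infty-e^{tA}Q_\infty e^{tA^*}$ together with the invariance of $\cald(A)$ and $\cald(A^*)$ under the respective semigroups correctly reduces the finite-$t$ case to the $t=+\infty$ case. For the density part, your two Douglas factorizations (both directions of which are indeed available precisely because Hypothesis \ref{NC} gives $\calr(Q_t^{1/2})=\calr(Q_\infty^{1/2})$ for $t\ge T_0$) do yield equivalence of the $Q_t^{-1/2}$- and $Q_\infty^{-1/2}$-norms on $H$, and the reduction via the isometry $Q_t^{1/2}:[\ker Q_t]^\perp\to(H,\|\cdot\|_{Q_t})$ to the $X$-density of $Q_t^{1/2}(\cald(A^*))$ in $[\ker Q_t]^\perp$ is correct; what you call ``diagonal extraction'' is just the standard two-step approximation (first by $\calr(Q_t^{1/2})$, then by $Q_t^{1/2}$ of a $\cald(A^*)$-approximant), which works. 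A marginally shorter route for $t=+\infty$ is to quote Lemma \ref{HH}-(ii) (density of $\calr(Q_\infty)$ in $H$) and approximate $Q_\infty x$ by $Q_\infty\xi_n$ with $\xi_n\in\cald(A^*)$, $\xi_n\to x$ in $X$, using $\|Q_\infty(\xi_n-x)\|_H=\|Q_\infty^{1/2}(\xi_n-x)\|_X$; but your uniform treatment of all $t\in[T_0,+\infty]$ is a legitimate alternative.
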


\begin{Remark}\label{rm:densrem}
{\rm
The above Lemma immediately implies that, for every $t\in [T_0, + \infty]$, $Q_t({\cal D}(A^*))$ is dense in $[\ker Q_\infty]^\perp$ with the topology inherited by $X$, since the inclusion of $H$ into $[\ker Q_\infty]^\perp$ is continuous.}
\end{Remark}

Now we state and prove three very useful lemmas.

\begin{Lemma}\label{exptAHH} Assume Hypothesis \ref{NC}. Then we have the following:
\begin{itemize}
\item[(i)] For every $z\in H$ and $r\ge 0$ we have $e^{rA}z\in H$; moreover the semigroup $e^{tA}|_H$ is strongly continuous in $H$. In particular, for each $T>0$ there exists $c_T>0$ such that
$$\|e^{rA}z\|_H \le c_T \|z\|_H \qquad \forall z\in H, \quad \forall r\in [0,T].$$
\item[(ii)] For every $\lambda \in \rho(A)$ we have $\lambda\in \rho(A_0)$ and
$R(\lambda,A_0)=R(\lambda,A)|_H$.
\item[(iii)] The generator $A_0$ of the semigroup $e^{tA}|_H$ is given by
\begin{equation}\label{eq:DomA0}
\left\{ \begin{array}{l}
{\cal D}(A_0)=\left\{x\in {\cal D}(A) \cap H \, :\; Ax \in H\right\} \\[2mm]
A_0x = Ax \quad \forall x\in {\cal D}(A_0).\end{array} \right.
\end{equation}
\end{itemize}
\end{Lemma}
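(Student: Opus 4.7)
I would prove the three items in the order (i), (iii), (ii). Item (i) is the substantive one, as it delivers both the invariance $e^{rA}(H)\subseteq H$ and the strong continuity of the restricted semigroup; item (iii) then follows from (i) by a routine transfer from the $X$-topology to the $H$-topology via the continuous embedding $H\hookrightarrow X$ (Lemma \ref{HH}(i)); item (ii) is obtained by combining (i) with Hypothesis \ref{NC} to upgrade the contractivity in $H$ to exponential decay, after which a Laplace-transform computation identifies the two resolvents. The one key idea for (i) is to transport an optimal control in time: the control-theoretic characterization of $H$ as the infinite-horizon reachable set is essential precisely at this step.

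\textbf{Proof of (i).} For $z\in H$, Propositions \ref{pr:exist} and \ref{pr:VlimitVt} (the latter using Hypothesis \ref{NC}) yield an optimal $u^*\in L^2(-\infty,0;U)$ with $\int_{-\infty}^0 e^{-\sigma A}Bu^*(\sigma)\,\ud\sigma=z$ and $\|u^*\|_{L^2}^2=\|z\|_H^2$. For $r\ge 0$ define the time-shifted control
$$\tilde u_r(\tau)=\begin{cases} u^*(\tau+r), & \tau<-r,\\ 0, & -r\le \tau\le 0.\end{cases}$$
The change of variables $\sigma=\tau+r$ gives $\int_{-\infty}^0 e^{-\tau A}B\tilde u_r(\tau)\,\ud\tau=e^{rA}z$, so $e^{rA}z\in H$ and $\|e^{rA}z\|_H^2\le\|\tilde u_r\|_{L^2}^2=\|z\|_H^2$, i.e. $e^{rA}|_H$ is a contraction (so one can take $c_T=1$ for every $T>0$). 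Applying the same device to the difference, the control $\tilde u_r-u^*$ reaches $e^{rA}z-z$, whence $\|e^{rA}z-z\|_H^2\le\|\tilde u_r-u^*\|_{L^2}^2\to 0$ as $r\to 0^+$ by continuity of translations in $L^2$. Strong continuity at any $r_0>0$ then follows from the contraction estimate and the semigroup identity $e^{rA}z-e^{r_0A}z=e^{\min(r,r_0)A}\bigl(e^{|r-r_0|A}z-z\bigr)$.

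\textbf{Proofs of (iii) and (ii).} For (iii), the inclusion $\cald(A_0)\subseteq\{x\in\cald(A)\cap H:Ax\in H\}$ is immediate because convergence of $(e^{tA}x-x)/t$ to $A_0x$ in $H$ forces the same convergence in $X$. The reverse inclusion follows from the fact that, by (i), $s\mapsto e^{sA}Ax$ is $H$-continuous when $Ax\in H$, so $e^{tA}x-x=\int_0^t e^{sA}Ax\,\ud s$ is a Bochner integral valid in $H$; dividing by $t$ and letting $t\to 0^+$ gives $x\in\cald(A_0)$ with $A_0x=Ax$. For (ii), I first strengthen the contractivity of (i) to exponential decay by writing $e^{tA}=e^{T_0A}e^{(t-T_0)A}$ for $t\ge T_0$: Hypothesis \ref{NC} gives $\calr(e^{T_0A})\subseteq\calr(Q_{T_0}^{1/2})\subseteq H$, Lemma \ref{HH}(v) upgrades this to $e^{T_0A}\in\call(X,H)$, and combining with $\|e^{tA}\|_{\call(X)}\le Me^{-\omega t}$ and the continuous embedding $H\hookrightarrow X$ yields $\|e^{tA_0}\|_{\call(H)}\le C'e^{-\omega t}$. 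Hence both $\rho(A)$ and $\rho(A_0)$ contain $\{\operatorname{Re}\lambda>-\omega\}$, and on that half-plane the Laplace integral $\int_0^\infty e^{-\lambda t}e^{tA}y\,\ud t$ converges in $X$ to $R(\lambda,A)y$ and in $H$ to $R(\lambda,A_0)y$, giving $R(\lambda,A_0)=R(\lambda,A)|_H$.

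\textbf{Main obstacle.} The delicate step is extending the identification in (ii) to arbitrary $\lambda\in\rho(A)$ lying outside $\{\operatorname{Re}\lambda>-\omega\}$: the Laplace representation no longer converges, and one must argue through the resolvent identity $R(\lambda,A)=R(\mu,A)+(\mu-\lambda)R(\mu,A)R(\lambda,A)$ combined with analytic continuation along paths in $\rho(A)$ that $R(\cdot,A)$ preserves $H$ throughout $\rho(A)$ (iterating the identity in balls of radius $<1/\|R(\mu,A_0)\|_{\call(H)}$ and exhausting a connected component). Everything else in the lemma—the invariance, the contractivity in $H$, strong continuity, and the explicit description of $\cald(A_0)$—flows cleanly from the single ingredient of shifting an optimal control in time, so this last analytic-continuation step is the only one that requires genuine care.
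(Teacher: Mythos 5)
Your proofs of (i) and (iii) are correct but follow a genuinely different route from the paper's. For (i), the paper takes $t>T_0$, writes $z\in H=\calr(Q_t^{1/2})=\calr(\call_{-t,0})$ (using Hypothesis \ref{NC}), represents $e^{rA}z$ as $\call_{-t-r,0}$ of a shifted control, gets boundedness of $e^{rA}|_H$ from the closed graph theorem, and proves strong continuity by a duality argument against the dense set $\calr(Q_\infty)$. Your shift of a minimal-energy \emph{infinite-horizon} control buys more with less: it yields the sharper conclusion that $e^{rA}|_H$ is a contraction (so $c_T=1$), it reduces strong continuity to continuity of translations in $L^2$, and it does not actually use Hypothesis \ref{NC} (the identity $\calr(\call_{-\infty,0})=\calr(Q_\infty^{1/2})$ with minimal norm $\|Q_\infty^{-1/2}x\|_X$ is the standard $LL^*$ fact). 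There is no circularity in invoking Propositions \ref{pr:exist} and \ref{pr:VlimitVt}, since their proofs do not rest on this lemma, though it would be cleaner to cite the Zabczyk-type statement directly. For (iii), the paper deduces the domain identification from (ii) via the computation $nA_0R(n,A_0)x=nR(n,A_0)Ax\to Ax$; your Bochner-integral argument $e^{tA}x-x=\int_0^t e^{sA}Ax\,\ud s$, read in $H$, is an equally valid and slightly more self-contained alternative.

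For (ii) there is a genuine gap, and it is exactly where you placed your ``main obstacle''. Your Laplace-transform identification is complete on $\{\operatorname{Re}\lambda>-\omega\}$ (the decay estimate $\|e^{tA_0}\|_{\Lc(H)}\le C'e^{-\omega t}$ via $e^{T_0A}\in\call(X,H)$ is correct and is essentially Lemma \ref{Qe}). But the proposed continuation to all of $\rho(A)$ does not close: the Neumann series $R(\lambda,A_0)=\sum_n(\mu-\lambda)^nR(\mu,A_0)^{n+1}$ converges only for $|\lambda-\mu|<1/\|R(\mu,A_0)\|_{\call(H)}$, i.e.\ within the distance to $\sigma(A_0)$ --- which is precisely the set you are trying to prove is small. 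The argument therefore shows only that $\{\lambda\in\rho(A)\cap\rho(A_0):R(\lambda,A_0)=R(\lambda,A)|_H\}$ is \emph{open} in $\rho(A)$, not that it is closed in $\rho(A)$ (the $H$-norms $\|R(\lambda_n,A)|_H\|_{\call(H)}$ could blow up along a sequence $\lambda_n\to\lambda\in\rho(A)$), and in addition $\rho(A)$ need not be connected, since $\sigma(A)\subseteq\{\operatorname{Re}\lambda\le-\omega\}$ can still separate the plane. What one would actually need is the statement $R(\lambda,A)(H)\subseteq H$ for every $\lambda\in\rho(A)$; once that is known, the standard ``part of an operator'' argument gives $\lambda\in\rho(A_0)$ and $R(\lambda,A_0)=R(\lambda,A)|_H$. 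To be fair, the paper's own one-line proof of (ii) (``immediate consequence of (i) and the resolvent formula'') establishes no more than your half-plane case, and only real $\lambda=n\to+\infty$ is ever used downstream (in (iii) and in Proposition \ref{pr:equivcomm}); so your proof covers everything the paper needs, but neither your continuation scheme nor the paper's citation proves the statement for an arbitrary $\lambda\in\rho(A)$.
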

We denote the semigroup $e^{tA}|_H$ by $e^{tA_0}$ (see Notation \ref{eq:A0}): thus $e^{tA_0}$ is a strongly continuous semigroup in $H$.
\begin{proof} {\bf (i)}
Fix any $z\in H$ and $t>T_0$: then, by Hypothesis \ref{NC}, $z\in {\cal R}(Q_\infty^{1/2})={\cal R}(Q_t^{1/2})={\cal R}({\cal L}_{-t,0})$ (see \eqref{eq:ptiragg-Q12}), i.e. there exists $u\in L^2(0,r;U)$ such that
$$z={\cal L}_{-t,0}(u) = \int_{-t}^0 e^{-\sigma A}\,Bu(\sigma)\,\ud \sigma.$$
Hence, for every $r>0$,
$$e^{rA}z = \int_{-t}^0 e^{(r-\sigma)A}\,Bu(\sigma)\,d\sigma = \int_{-t}^{r} e^{(r-\sigma)A}\,B\overline{u}(\sigma)\,\ud \sigma,$$
where
$$\overline{u}(s) = \left\{ \begin{array}{ll} u(s) & \textrm{if } s\in [-t,0] \\[2mm] 0 & \textrm{if } s\in [0,r].\end{array} \right.$$
Setting $r-\sigma=-s$ and $v(s)=\overline{u}(s+r)$, it follows that
$$e^{rA}z = \int_{-t-r}^0 e^{-sA}\,B\overline{u}(r+s)\,ds = {\cal L}_{-t-r,0}(v) \in {\cal R}({\cal L}_{-t-r,0}) = {\cal R}(Q_{t+r}^{1/2}) = {\cal R}(Q_{\infty}^{1/2}) = H.$$
Let us now prove that the restriction of $e^{rA}$ to $H$ has closed graph in $H$: if $z,\{z_n\}\subset H$ and $z_n\to z$ in $H$, $e^{rA}z_n \to w\in H$ in $H$, then, since $H$ is continuously embedded into $X$,
$$z_n \to z \textrm{ in } X, \qquad e^{rA}z_n \to w \textrm{ in } X;$$
but $e^{rA}\in {\cal L}(X)$, so that $w=e^{rA}z$. Thus $e^{rA}z_n \to e^{rA}z$ in $H$, and it follows that $e^{rA}\in {\cal L}(H)$. \\
Now fix $x\in H$ and consider for $t>0$ the quantity $e^{tA}x-x$. We have
$$\|e^{tA}x-x\|_H = \sup_{\|y\|_H=1} \langle e^{tA}x-x,y\rangle_H\,;$$
thus, for every $\varepsilon \in \,]0,1[\,$ there exists $y_\varepsilon\in H$ with $\|y_\varepsilon\|_H=1$ such that		
$$\|e^{tA}x-x\|_H < \varepsilon + \langle e^{tA}x-x,y_\varepsilon\rangle_H;$$
then, using Lemma \ref{dens} and choosing $z_\varepsilon \in {\cal R}(Q_\infty)$ such that $\|z_\varepsilon-y_\varepsilon\|_H<\varepsilon$, we obtain
\bey
\|e^{tA}x-x\|_H & < & \varepsilon + \langle e^{tA}x-x,y_\varepsilon-z_\varepsilon \rangle_H + \langle e^{tA}x-x,z_\varepsilon\rangle_H \le\\
& \le & \varepsilon + \|e^{tA}x-x\|_H \,\|y_\varepsilon-z_\varepsilon\|_H + \langle e^{tA}x-x,Q_\infty^{-1}z_\varepsilon\rangle_X \le \\
& \le & \varepsilon + \|e^{tA}x-x\|_H\,\varepsilon + \|e^{tA}x-x\|_X \,\|Q_\infty^{-1}z_\varepsilon\|_X\,.
\eey
Hence
$$(1-\varepsilon)\|e^{tA}x-x\|_H < \varepsilon+ \|e^{tA}x-x\|_X \,\|Q_\infty^{-1}z_\varepsilon\|_X\,,$$
and letting $t\to 0^+$ we get
$$\limsup_{t\to 0^+} \|e^{tA}x-x\|_H \le \frac{\varepsilon}{1-\varepsilon} + 0.$$
The arbitrariness of $\varepsilon$ leads to the conclusion.\\[1mm]
{\bf (ii)}
This is an immediate consequence of point (i) and of the well known resolvent formula
$R(\lambda,A)=\int_0^{+\infty}e^{-\lambda t}e^{tA}\,\ud t$.\\[1mm]
{\bf (iii)}
If $z\in {\cal D}(A_0)$ then it must be, by definition, $z \in {\cal D}(A)$, $z \in H$, $Az \in H$; hence ${\cal D}(A_0)\subseteq\left\{x\in {\cal D}(A) \cap H \, :\; Ax \in H\right\}$.
To prove the converse we first observe that, using (ii),
we get, for $n\in \N-\{0\}$ and $h\in H$,
$$
nAR(n,A)x=nx-n^2R(n,A)x=nx-n^2R(n,A_0)x=nA_0R(n,A_0)x
$$
Now assume that $z\in {\cal D}(A)\cap H$ with $Az\in H$.
To prove that $z\in {\cal D}(A_0)$ it is enough to show that
$nA_0R(n,A_0)z$ converges to some element $y$ of $H$ when $n\to + \infty$. In this case such element is $A_0z$.
To do this we observe that, by the above remarks for the resolvents and by the assumptions on $z$, we get
$$
nA_0R(n,A_0)z=nAR(n,A)z=nR(n,A)Az
=nR(n,A_0)Az
$$
The latter, by the properties of resolvents, converges in $H$ to $Az$ as $n \to +\infty$, since $Az \in H$. This shows that
$z\in {\cal D}(A_0)$ and $A_0z=Az$.
%
\end{proof}

\begin{Lemma}\label{lm:QinftyA0} Assume Hypothesis \ref{NC}.
Then we have the following.
\begin{itemize}
  \item[(i)] $Q_\infty (H)$ is dense in $H$.
  \item[(ii)] $Q_\infty ({\cal D}(A_0^*))$ is dense in $H$.
  \item[(iii)] Let $A$ be selfadjoint and commuting with $BB^*$.
  Then $Q_\infty ({\cal D}(A_0^*))\subseteq {\cal D}(A_0)$.
  Moreover $A_0^*$ is selfadjoint in $H$.
\end{itemize}
\end{Lemma}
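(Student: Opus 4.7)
The plan is to handle the three items in turn: parts (i) and (ii) rely on simple structural properties of $Q_\infty$, while (iii) hinges on a spectral unitary-equivalence argument. Throughout I will use freely the isometric isomorphism $Q_\infty^{-1/2}:H\to[\ker Q_\infty]^\perp$ from Lemma \ref{HH}-(iii).

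For part (i), since $\calr(Q_\infty)$ is already dense in $H$ by Lemma \ref{HH}-(ii), it is enough to approximate each $Q_\infty x$ with $x\in X$ in $H$-norm by elements of the smaller set $Q_\infty(H)$. Decomposing $x=x_1+x_0$ with $x_1\in[\ker Q_\infty]^\perp$ and $x_0\in\ker Q_\infty$, we have $Q_\infty x=Q_\infty x_1$. Since $\overline{\calr(Q_\infty^{1/2})}^{X}=[\ker Q_\infty]^\perp$, the subspace $H$ is $X$-dense in $[\ker Q_\infty]^\perp$; pick $x_n\in H$ with $x_n\to x_1$ in $X$. The estimate $\|Q_\infty(x_n-x_1)\|_H=\|Q_\infty^{1/2}(x_n-x_1)\|_X\le \|Q_\infty^{1/2}\|_{\Lc(X)}\|x_n-x_1\|_X$ then yields $Q_\infty x_n\to Q_\infty x$ in $H$.

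For part (ii), the key observation is that $Q_\infty$ restricts to a bounded, selfadjoint, injective operator on $H$. Boundedness is from $\|Q_\infty x\|_H=\|Q_\infty^{1/2}x\|_X$; selfadjointness in $H$ follows from the symmetric identity $\<Q_\infty x,y\>_H=\<x,y\>_X$ for $x,y\in H$; injectivity holds because $Q_\infty x=0$ with $x\in H\subseteq[\ker Q_\infty]^\perp$ forces $x\in\ker Q_\infty\cap[\ker Q_\infty]^\perp=\{0\}$. Since $A_0$ generates a $C_0$-semigroup on the Hilbert space $H$, its adjoint $A_0^*$ also generates one, so $\cald(A_0^*)$ is dense in $H$. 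A one-line orthogonality argument then closes (ii): if $v\in H$ satisfies $\<v,Q_\infty w\>_H=0$ for every $w\in\cald(A_0^*)$, then $\<Q_\infty v,w\>_H=0$ for such $w$ by selfadjointness, hence $Q_\infty v=0$ by density of $\cald(A_0^*)$, and finally $v=0$ by injectivity.

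For part (iii), with $A$ selfadjoint and commuting with $BB^*$, the formula $Q_\infty=-\tfrac12 A^{-1}BB^*$ (from \cite[Prop.~C.1-(v)]{AcquistapaceGozzi17}) together with $ABB^*=BB^*A$ gives $AQ_\infty x=Q_\infty Ax=-\tfrac12 BB^*x$ for every $x\in\cald(A)$. By uniqueness of solutions of the abstract Cauchy problem this upgrades to $e^{tA}Q_\infty=Q_\infty e^{tA}$ on $X$, and then, via the Borel functional calculus applied to the positive selfadjoint $Q_\infty$, to $e^{tA}Q_\infty^{1/2}=Q_\infty^{1/2}e^{tA}$. Hence the splitting $X=\ker Q_\infty\oplus[\ker Q_\infty]^\perp$ is $e^{tA}$-invariant and the restriction $\tilde A:=A|_{[\ker Q_\infty]^\perp}$ is selfadjoint on $[\ker Q_\infty]^\perp$. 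The isometric isomorphism $Q_\infty^{1/2}:[\ker Q_\infty]^\perp\to H$ intertwines $e^{t\tilde A}$ with $e^{tA_0}$, so $A_0$ is unitarily equivalent to $\tilde A$ and hence selfadjoint on $H$; in particular $A_0^*=A_0$. With selfadjointness in hand the first claim of (iii) reduces to $Q_\infty(\cald(A_0))\subseteq\cald(A_0)$, which I verify directly: for $x\in\cald(A_0)$ we have $Q_\infty x\in \cald(A)\cap H$ (using (i) for the $H$-membership), and $AQ_\infty x=Q_\infty Ax\in Q_\infty(H)\subseteq H$ by commutativity together with $Ax\in H$. The most delicate step is the spectral argument upgrading commutativity of $A,Q_\infty$ to commutativity of $e^{tA},Q_\infty^{1/2}$ and the ensuing unitary equivalence of semigroups; but since $A$ and $Q_\infty$ are selfadjoint and strongly commute, this is a standard consequence of the Borel functional calculus.
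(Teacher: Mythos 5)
Your proof is correct, and for items (ii) and (iii) it follows a genuinely different route from the paper's. For (i) you and the paper do essentially the same thing: both exploit $\overline{\calr(Q_\infty)}=\overline{\calr(Q_\infty^{1/2})}=[\ker Q_\infty]^\perp$ together with the isometry $\|Q_\infty z\|_H=\|Q_\infty^{1/2}z\|_X$. For (ii) the paper argues by explicit approximation: starting from $x_n=Q_\infty z_n\to x$ given by (i), it perturbs $z_n$ into $\cald(A_0^*)$ using density and the bound $\|Q_\infty\|_{\Lc(H)}<\infty$. You instead observe that $Q_\infty|_H$ is a bounded, injective, selfadjoint operator on $H$ (via the identity $\<Q_\infty x,y\>_H=\<x,y\>_X$) and run an orthogonality argument; this is slicker and, notably, does not use (i) at all. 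For (iii) the divergence is larger: the paper works by hand, first showing $\cald(A_0^*)\subseteq\cald(A^*)=\cald(A)$, then the inclusion $Q_\infty(\cald(A_0^*))\subseteq\cald(A_0)$, then the symmetry $\<A_0x,y\>_H=\<x,A_0y\>_H$ on the dense set $Q_\infty(\cald(A_0^*))$ coming from (ii) to conclude $A_0\subseteq A_0^*$, and finally a resolvent identity to force equality of the domains. You instead upgrade $AQ_\infty=Q_\infty A$ to commutation of $e^{tA}$ with $Q_\infty$ and hence with $Q_\infty^{1/2}$, deduce that $[\ker Q_\infty]^\perp$ reduces $A$, and transport the selfadjoint part $\tilde A=A|_{[\ker Q_\infty]^\perp}$ to $H$ through the unitary $Q_\infty^{1/2}:[\ker Q_\infty]^\perp\to H$, which intertwines $e^{t\tilde A}$ with $e^{tA_0}$; selfadjointness of $A_0$ is then immediate, and the inclusion $Q_\infty(\cald(A_0))\subseteq\cald(A_0)$ follows from the commutation together with the characterization of $\cald(A_0)$ in Lemma \ref{exptAHH}-(iii). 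Your argument uses heavier machinery (functional calculus, reducing subspaces, the fact that a $C_0$-semigroup of selfadjoint operators has a selfadjoint generator) but yields more, namely the explicit unitary equivalence of $A_0$ with $A|_{[\ker Q_\infty]^\perp}$, whereas the paper's is longer but entirely elementary. Two cosmetic remarks: the membership $Q_\infty x\in H$ that you attribute to (i) really follows from $\calr(Q_\infty)\subseteq\calr(Q_\infty^{1/2})=H$, since (i) is only a density statement; and your reordering of (iii) (selfadjointness first, inclusion second) is legitimate but opposite to the paper, where the inclusion is an ingredient of the selfadjointness proof.
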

\begin{proof} {\bf(i)}
Since $\ker Q_\infty^{1/2}=\ker Q_\infty$,
we have $\overline{{\cal R}( Q_\infty^{1/2})}=
\overline{{\cal R}( Q_\infty)}$.
Fix $x \in H$ and set $z:=Q_\infty^{-1/2}x\in \overline{{\cal R}( Q_\infty^{1/2})}$.
Then there exists $\{w_n\}\subset X$ such that, defining $z_n:=Q_\infty w_n\in {\cal R}( Q_\infty)$, we have $z_n \to z$ in $X$. Set
$$
x_n:=Q_\infty^{1/2} z_n=Q_\infty^{1/2}Q_\infty w_n=Q_\infty Q_\infty^{1/2}w_n.
$$
Clearly $x_n\in Q_\infty(H)$.
Moreover
$$
\|x_n - x\|_H=\|Q_\infty^{1/2} z_n - x\|_H=\|z_n -z\|_X \to 0 \quad \hbox{as $n \to +\infty$,}
$$
which proves the claim.\\[2mm]
[{\bf(ii)}
Fix $x \in H$. By part (i) there exists $\{x_n\}\subset Q_\infty (H)$
such that $x_n \to x$ in $H$.
We must have $x_n:=Q_\infty z_n$, with $z_n \in H$. Since
${\cal D}(A^*_0)$ is dense in $H$, then, for every $n \in \N_+$
there exists $w_n \in {\cal D}(A^*_0)$ such that
$\|z_n-w_n\|_H< 1/n$.
Consequently, setting $y_n:=Q_\infty w_n$, we have,
using Lemma \ref{HH}-(iv),
$$
\|y_n-x\|_H\le\|Q_\infty (w_n - z_n)\|_H+ \|x_n-x\|_H \le
\|Q_\infty\|_{\Lc(H)}\frac1n+\|x_n-x\|_H\, .
$$
This proves the claim.\\[2mm]
{\bf(iii)}
Let $A$ be selfadjoint and commuting with $BB^*$.
Observe first that ${\cal D}(A_0^*)\subseteq {\cal D}(A^*)={\cal D}(A)$.
Indeed, when $x \in {\cal D}(A_0^*)$, the linear map $y\to \<x,A_0y\>_H$
is bounded in $H$.
Using such boundedness and the fact that $A$ and $Q_\infty$ commute
(see \cite[Proposition C.1-(v)]{AcquistapaceGozzi17})), we get, for every $y \in {\cal D}(A)$,
$$
\<x,Ay\>_X=\<x,Q_\infty Ay\>_H=\<x,AQ_\infty y\>_H=
\<x,A_0Q_\infty y\>_H\le C\|Q_\infty y\|_H \le C'\| y\|_X
$$
which implies $x \in {\cal D}(A^*)={\cal D}(A)$.
\\
Now, let $x\in Q_\infty ({\cal D}(A_0^*))$ (which is contained in ${\cal D}(A)$ by Lemma \ref{dens},
since ${\cal D}(A_0^*)\subseteq {\cal D}(A^*)$) and let $z\in {\cal D}(A_0^*)$ be such that
$x=Q_\infty z$. Using again the fact that $A$ and $Q_\infty$ commute, we get
$Ax=AQ_\infty z =Q_\infty Az\in H$. Hence, by definition of $A_0$, we deduce that
$x \in  {\cal D}(A_0)$ and $A_0x=Ax$.\\
Now we prove that $A_0$ is selfadjoint in $H$.
Let $x \in {\cal D}(A_0)$ and $y \in Q_\infty ({\cal D}(A_0^*))$.
Then for some $z \in {\cal D}(A_0^*)$ we have $y=Q_\infty z$
and $Q_\infty^{-1} y=z+z_0$, where $z_0\in \ker Q_\infty$.
Hence it must be $\<Ax,z_0\>_X=0$, since
$Ax=A_0x\in H\subseteq [\ker Q_\infty]^\perp$.
Using this fact, we get
\bey
\<A_0x,y\>_H & = & \<Ax,y\>_H = \<Ax,Q_\infty^{-1} y\>_X =\<Ax,z\>_X = \<x,Az\>_X =\\
& = & \<x,Q_\infty A z\>_H = \<x,AQ_\infty z\>_H=\<x,Ay\>_H=\<x,A_0y\>_H\,,
\eey
where in the last step we used the inclusion ${\cal D}(A_0^*)\subseteq {\cal D}(A)$,
the fact that $Q_\infty$ and $A$ commute, and the inclusion $Q_\infty ({\cal D}(A_0^*))\subseteq {\cal D}(A_0)$.
This implies that, for every $x\in {\cal D}(A_0)$, the linear map
$y \to \<x,A_0 y\>_H$ is defined on $Q_\infty ({\cal D}(A_0^*))$ (which is dense in $H$)
and is bounded in $H$. This implies that $x \in {\cal D}(A_0^*)$ and $A_0^*x=A_0x$.
Hence $A_0^*$ extends $A_0$. Since both $A_0$ and $A_0^*$ generate a semigroup, we can choose $\lambda>0$ such that $\lambda \in \rho(A_0)\cap \rho(A_0^*)$.
For such $\lambda$ we now prove that $R (\lambda,A_0^*)=R (\lambda,A_0)$,
which immediately implies that ${\cal D}(A_0)={\cal D}(A_0^*)$. Indeed for $z \in H$ we have
$$
z=(\lambda-A_0)R (\lambda,A_0)z=(\lambda-A_0^*)R (\lambda,A_0)z,
$$
where in the last equality we used that ${\cal D}(A_0)\subseteq {\cal D}(A_0^*)$ and that $A_0^*x=A_0x$ for all $x \in {\cal D}(A_0)$. Applying $R (\lambda,A_0^*)$ to both sides we get the claim.
\end{proof}

\begin{Lemma}\label{Qe}
Assume Hypothesis \ref{NC}.
\begin{description}
\item[(i)] For $0\le s\le T_0$ we have $Q_\infty^{-1/2} e^{sA}\in \Lc(H,X)$,
and there exists $C_1(T_0)>0$ such that
$$\|Q_\infty^{-1/2} e^{sA}x\|_X \le C_1({T_0}) \|x\|_H \qquad \forall x\in H, \quad \forall s\in [0,T_0],$$
and $(Q_\infty^{-1/2}e^{sA})^*=e^{sA_0^*} Q_\infty^{-1/2}\in \Lc(X,H)$, with
$$\|(Q_\infty^{-1/2}e^{sA})^*\|_{\Lc(X,H)}\le C_1({T_0})\qquad \forall s\in [0,T_0].$$
\item[(ii)] For $s>T_0$ we have $Q_\infty^{-1/2} e^{sA}\in \Lc(X)$, with
$$\|Q_\infty^{-1/2} e^{sA}x\|_X \le C_1({T_0}) Me^{-\omega(s-T_0)}\|x\|_X \qquad \forall x\in X, \quad \forall s> T_0,$$
and
$$\|(Q_\infty^{-1/2}e^{sA})^*\|_{\Lc(X)} \le C_1({T_0}) Me^{-\omega(s-T_0)}\qquad \forall s> T_0.$$
\item[(iii)] For $s\ge 0$, $x \in X$  we have
$$e^{sA_0^*}Q_\infty x= Q_\infty e^{sA^*}x.$$
\item[(iv)] For $x\in {\cal D}(A^*)$ we have $Q_\infty x\in {\cal D}(A_0^*)$.
Moreover, for every $s\ge 0$ we have
$$A_0^* e^{sA_0^*}Q_\infty x=Q_\infty A^* e^{tA^*} x.$$
\end{description}
\end{Lemma}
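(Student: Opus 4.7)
For part (i), I would exploit that, by Lemma \ref{HH}-(iii), $Q_\infty^{-1/2}$ is an isometry from $H$ onto $[\ker Q_\infty]^\perp$: for $x\in H$, Lemma \ref{exptAHH}-(i) gives $e^{sA}x=e^{sA_0}x\in H$, so $\|Q_\infty^{-1/2}e^{sA}x\|_X=\|e^{sA_0}x\|_H$, and the uniform bound $\|e^{sA_0}\|_{\Lc(H)}\le c_{T_0}$ on $[0,T_0]$ from the same lemma yields the estimate with $C_1(T_0):=c_{T_0}$. The dual bound $\|(Q_\infty^{-1/2}e^{sA})^*\|_{\Lc(X,H)}\le C_1(T_0)$ is automatic from $\|T^*\|=\|T\|$; the explicit formula $e^{sA_0^*}Q_\infty^{-1/2}$ for the adjoint would be verified on the dense subspace $\calr(Q_\infty)\subseteq H$ (Lemma \ref{HH}-(ii)) using the auxiliary identity $\langle Q_\infty c,b\rangle_H=\langle c,b\rangle_X$, valid for $c\in X$, $b\in H$, which follows from the definition of $\langle\cdot,\cdot\rangle_H$ and self-adjointness of $Q_\infty^{1/2}$.

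For part (ii), I would decompose $e^{sA}=e^{T_0A}\,e^{(s-T_0)A}$ and use Hypothesis \ref{NC}, which gives $\calr(e^{T_0A})\subseteq \calr(Q_{T_0}^{1/2})=H$. A closed-graph argument (using continuity of the embedding $H\hookrightarrow X$ from Lemma \ref{HH}-(i)) upgrades this to $e^{T_0A}\in\Lc(X,H)$, so by Lemma \ref{HH}-(iii), $\|Q_\infty^{-1/2}e^{T_0A}y\|_X=\|e^{T_0A}y\|_H\le K\|y\|_X$ for a suitable $K>0$, i.e.\ $Q_\infty^{-1/2}e^{T_0A}\in\Lc(X)$. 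Combining with the exponential decay $\|e^{(s-T_0)A}\|_{\Lc(X)}\le Me^{-\omega(s-T_0)}$ from Hypothesis \ref{hp:main}-(ii), and enlarging the constant $C_1(T_0)$ if necessary to absorb $K$, yields the claimed estimate; the adjoint bound again follows from $\|T^*\|=\|T\|$.

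For the algebraic statements (iii)--(iv) I would start with the first part of (iv). For $x\in\cald(A^*)$ and $y\in\cald(A_0)\subseteq H\cap\cald(A)$, the identity $\langle Q_\infty c,b\rangle_H=\langle c,b\rangle_X$ (with $c=x$, $b=A_0y=Ay$) gives $\langle Q_\infty x,A_0y\rangle_H=\langle x,Ay\rangle_X=\langle A^*x,y\rangle_X$, which by the continuous embedding $H\hookrightarrow X$ is bounded by a constant times $\|A^*x\|_X\,\|y\|_H$; hence $Q_\infty x\in\cald(A_0^*)$. Re-applying the same identity with $c=A^*x$, $b=y$ identifies $A_0^*Q_\infty x=Q_\infty A^*x$. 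For (iii), I would fix $x\in\cald(A^*)$ and set $f(s):=e^{sA_0^*}Q_\infty x-Q_\infty e^{sA^*}x\in H$; then $f(0)=0$, and combining the first part of (iv) (applied to $e^{sA^*}x\in\cald(A^*)$) with the semigroup chain rule yields $f'(s)=A_0^*f(s)$. Uniqueness for the Cauchy problem generated by $A_0^*$ forces $f\equiv 0$, and density of $\cald(A^*)$ in $X$ together with the boundedness of both $e^{sA_0^*}Q_\infty$ and $Q_\infty e^{sA^*}$ as operators $X\to H$ extends the identity to all $x\in X$. Finally, the second statement of (iv) follows by chaining: $A_0^*e^{sA_0^*}Q_\infty x=A_0^*Q_\infty e^{sA^*}x=Q_\infty A^*e^{sA^*}x$.

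The main obstacle I anticipate is the careful handling of the unbounded pseudoinverse $Q_\infty^{-1/2}$ and its interaction with the $\ker Q_\infty$ direction across the three spaces $X$, $H$, and $[\ker Q_\infty]^\perp$; the auxiliary identity $\langle Q_\infty c,b\rangle_H=\langle c,b\rangle_X$ is the pivot that lets one move cleanly between $X$- and $H$-inner products and is the technical heart of both the adjoint formula in (i) and the proof of (iv). Once that identity and its consequences are in hand, the remaining steps are semigroup bookkeeping and density arguments.
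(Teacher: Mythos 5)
Your proposal is correct. For parts (i) and (ii) it coincides with the paper's argument: the isometry $\|Q_\infty^{-1/2}e^{sA}x\|_X=\|e^{sA_0}x\|_H$ together with the uniform bound of Lemma \ref{exptAHH}-(i), and then the factorization $e^{sA}=e^{T_0A}e^{(s-T_0)A}$ with Hypothesis \ref{NC} (your closed-graph step is exactly Lemma \ref{HH}-(v)). One remark on (i): if you actually run the duality computation you sketch, you obtain $(Q_\infty^{-1/2}e^{sA})^*=e^{sA_0^*}Q_\infty^{1/2}$, not $e^{sA_0^*}Q_\infty^{-1/2}$ as printed in the statement; the paper's own proof also produces $Q_\infty^{1/2}$ (the only version that makes sense as an operator $X\to H$), so the exponent in the statement is a misprint that your computation would have exposed. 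Where you genuinely depart from the paper is in the logical order of (iii)--(iv). The paper proves (iii) first by a one-line weak identity, $\<e^{sA_0^*}Q_\infty x,z\>_H=\<Q_\infty x,e^{sA_0}z\>_H=\<x,e^{sA}z\>_X=\<e^{sA^*}x,z\>_X=\<Q_\infty e^{sA^*}x,z\>_H$ for $z\in H$ --- precisely your pivot identity $\<Q_\infty c,b\>_H=\<c,b\>_X$ applied at the semigroup level --- and then deduces all of (iv) by the difference quotient $h^{-1}\bigl(e^{(s+h)A_0^*}-e^{sA_0^*}\bigr)Q_\infty x=Q_\infty h^{-1}\bigl(e^{(s+h)A^*}-e^{sA^*}\bigr)x$, passing to the limit via $Q_\infty\in\Lc(X,H)$. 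You go the other way: you first establish the generator-level intertwining $A_0^*Q_\infty x=Q_\infty A^*x$ (a correct duality argument, essentially the $s=0$ case of the paper's (iv)) and then lift it to the semigroups by uniqueness for $f'=A_0^*f$, $f(0)=0$. This is sound --- you correctly supply the needed regularity, namely $Q_\infty e^{sA^*}x\in\cald(A_0^*)$ and differentiability of both branches, and $A_0^*$ does generate a $C_0$-semigroup on $H$ --- but it costs you a well-posedness theorem plus a density extension from $\cald(A^*)$ to $X$, whereas the paper's order gets (iii) for all $x\in X$ in one line and obtains the generator statement afterwards for free by differentiation. Both routes hinge on the same identity $\<Q_\infty c,b\>_H=\<c,b\>_X$; the paper's is the more economical of the two.
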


\begin{proof} {\bf (i)}
We have by Lemma \ref{exptAHH}-(i)
$$\|Q_\infty^{-1/2} e^{sA}x\|_X =\|e^{sA}x\|_H \le c_{T_0} \|x\|_H \qquad \forall x\in H, \quad \forall s\in [0,T_0];$$
moreover (identifying here $X$ and $H$ with their duals), $(Q_\infty^{-1/2} e^{sA})^*\in \Lc(X,H)$ and, for all $z\in H$ and $x\in X$, we have
$$\langle (Q_\infty^{-1/2}e^{sA})^*x,z\rangle_{H}= \langle x,Q_\infty^{-1/2}e^{sA}z\rangle_X =\langle Q_\infty^{1/2}x,e^{sA_0}z\rangle_H = \langle e^{sA_0^*}Q_\infty^{1/2}x,z\rangle_H\,.$$
This shows that
$$(Q_\infty^{-1/2} e^{sA})^* =e^{sA_0^*} Q_\infty^{1/2} \in \Lc(X,H),$$
with
$$\|(Q_\infty^{-1/2} e^{sA})^*\|_{\Lc(X,H)} = \|e^{sA_0} Q_\infty^{1/2}\|_{\Lc(X,H   )}= \|Q_\infty^{-1/2} e^{sA}\|_{\Lc(H,X)} \le c_{T_0} \quad \forall s\in [0,T_0].$$
{\bf (ii)}
By Hypothesis \ref{NC} we have $Q_\infty^{-1/2} e^{sA}\in \Lc(X)$, and by (i) we get
\bey
\|Q_\infty^{-1/2} e^{sA}\|_{\Lc(X)} & = & \|Q_\infty^{-1/2} e^{T_0A}e^{(s-T_0)A}\|_{\Lc(X)}\\
& \le & \|Q_\infty^{-1/2} e^{T_0A}\|_{\Lc(X)} Me^{-\omega(s-T_0)}\le C_1({T_0})Me^{-\omega(s-T_0)} \quad \forall s>T_0.
\eey
The claim easily follows.\\[2mm]
{\bf (iii)} For $s\ge 0$, $x \in X$, $z \in H$, we have
$$\<e^{sA_0^*}Q_\infty x,z\>_H =\<Q_\infty x,e^{sA_0}z\>_H=\<x,e^{sA}z\>_X
=\<e^{sA^*}x,z\>_X=\<Q_\infty e^{sA^*}x,z\>_H\,,$$
which proves the claim.\\[2mm]
\item[(iv)] Let $x\in {\cal D}(A^*)$ and $s\ge 0$.
For $h>0$ we have, using the point (iii) above,
$$
\frac{e^{(s+h)A_0^*}-e^{sA_0^*}}{h}Q_\infty x
=Q_\infty \frac{e^{(s+h)A^*}-e^{sA^*}}{h} x
$$
Letting $h\to 0$, the claim follows.
\end{proof}

\end{document}